\documentclass[sn-mathphys-num]{sn-jnl}% Math and Physical Sciences Numbered Reference Style 
%%\documentclass[sn-mathphys-ay]{sn-jnl}% Math and Physical Sciences Author Year Reference Style
%%\documentclass[sn-aps]{sn-jnl}% American Physical Society (APS) Reference Style
%%\documentclass[sn-vancouver,Numbered]{sn-jnl}% Vancouver Reference Style
%%\documentclass[sn-apa]{sn-jnl}% APA Reference Style 
%%\documentclass[sn-chicago]{sn-jnl}% Chicago-based Humanities Reference Style

%%%% Standard Packages
%%<additional latex packages if required can be included here>

\usepackage{graphicx}%
\usepackage{multirow}%
\usepackage{amsmath,amssymb,amsfonts}%
\usepackage{amsthm}%
\usepackage{mathrsfs}%
\usepackage[title]{appendix}%
\usepackage{xcolor}%
\usepackage{textcomp}%
\usepackage{manyfoot}%
\usepackage{booktabs}%
\usepackage{algorithm}%
\usepackage{algorithmicx}%
\usepackage{algpseudocode}%
\usepackage{listings}%
%\usepackage{amsmath}
%%%%

%%%%%=============================================================================%%%%
%%%%  Remarks: This template is provided to aid authors with the preparation
%%%%  of original research articles intended for submission to journals published 
%%%%  by Springer Nature. The guidance has been prepared in partnership with 
%%%%  production teams to conform to Springer Nature technical requirements. 
%%%%  Editorial and presentation requirements differ among journal portfolios and 
%%%%  research disciplines. You may find sections in this template are irrelevant 
%%%%  to your work and are empowered to omit any such section if allowed by the 
%%%%  journal you intend to submit to. The submission guidelines and policies 
%%%%  of the journal take precedence. A detailed User Manual is available in the 
%%%%  template package for technical guidance.
%%%%%=============================================================================%%%%

%% as per the requirement new theorem styles can be included as shown below
\theoremstyle{thmstyleone}%
\newtheorem{theorem}{Theorem}%  meant for continuous numbers
%%\newtheorem{theorem}{Theorem}[section]% meant for sectionwise numbers
%% optional argument [theorem] produces theorem numbering sequence instead of independent numbers for Proposition
% 
%%\newtheorem{proposition}{Proposition}% to get separate numbers for theorem and proposition etc.

\theoremstyle{thmstyletwo}%
\newtheorem{remark}{Remark}%
\newtheorem{lemma}{Lemma}%

\theoremstyle{thmstylethree}%
\newtheorem{definition}{Definition}%

\raggedbottom
%%\unnumbered% uncomment this for unnumbered level heads

\begin{document}

\title[Large Deviation Principle for Slow-Fast Systems with Infinite-Dimensional Mixed Fractional Brownian Motion]{Large Deviation Principle for Slow-Fast Systems with Infinite-Dimensional Mixed Fractional Brownian Motion}

%%=============================================================%%
%% GivenName	-> \fnm{Joergen W.}
%% Particle	-> \spfx{van der} -> surname prefix
%% FamilyName	-> \sur{Ploeg}
%% Suffix	-> \sfx{IV}
%% \author*[1,2]{\fnm{Joergen W.} \spfx{van der} \sur{Ploeg} 
%%  \sfx{IV}}\email{iauthor@gmail.com}
%%=============================================================%%
\author[1,2]{ \sur{Wenting Xu}}\email{qfnuxwt@163.com}

\author*[1,2]{ \sur{Yong Xu}}\email{hsux3@nwpu.edu.cn}

\author[3]{ \sur{Xiaoyu Yang}}\email{yangxiaoyu@yahoo.com}

\author[1,2,4]{ \sur{Bin Pei}}\email{binpei@nwpu.edu.cn}

\affil*[1]{\orgdiv{School of Mathematics and Statistics}, \orgname{Northwestern Polytechnical University}, \orgaddress{ \city{Xi'an}, \postcode{710072}, \country{China}}}

\affil*[2]{\orgname{MOE Key Laboratory of Complexity Science in Aerospace}, \orgaddress{\city{Xi'an}, \postcode{710072}, \country{China}}}

\affil[3]{\orgdiv{Graduate School of Information Science and Technology}, \orgname{Osaka University}, \orgaddress{  \city{Osaka}, \postcode{5650871}, \country{Japan}}}

\affil[4]{  \orgname{Research \& Development institute of Northwestern Polytechnical University in Shenzhen}, \orgaddress{ \city{Shenzhen}, \postcode{518057},  \country{China}}}

%%==================================%%
%% Sample for unstructured abstract %%
%%==================================%%

\abstract{This work is concerned with the large deviation principle for a family of slow-fast systems perturbed by infinite-dimensional mixed  fractional Brownian motion  with Hurst
	parameter $H\in(\frac12,1)$. We adopt the weak convergence method which is
	based on the variational representation formula for infinite-dimensional mixed fractional Brownian motion.
	To obtain the weak convergence of the controlled systems, we apply the Khasminskii's averaging principle and the time discretization technique.
	In addition, we drop the boundedness assumption of the drift coefficients of the slow components and the diffusion coefficients of the fast components.
	Based on the proof of the large deviation principle, we also establish the moderate deviation principle for the slow-fast systems.}

\keywords{Cylindrical fractional Brownian motion, Slow-fast system, Large deviation principle, Moderate deviation principle, Stochastic partial differential equation, Weak convergence method}

%%\pacs[JEL Classification]{D8, H51}

%%\pacs[MSC Classification]{35A01, 65L10, 65L12, 65L20, 65L70}

\maketitle

\section{Introduction}\label{1}

Many complex phenomena in fields such as materials science, chemistry, fluid dynamics, biology, ecology and climate dynamics can be modeled by slow-fast systems constituted by various types of differential equations \cite{bertram2017multi, harvey2011multiple,mastny2007two,kuehn2015multiple}. One typical example is climate dynamics \cite{berner2017stochastic}. The complexity of climate systems variability arises from several factors: the nature of external forcings, the inhomogeneities in the physical and chemical properties of its components, and the wide range of dynamic processes within each component, as well as the coupling mechanisms between them. This variability spans over ten orders of magnitude in space, from Kolmogorov's dissipation scale to the Earth's radius, and exhibits even greater variations in time, ranging from microseconds to hundreds of millions of years \cite{ghil2002natural}.
Another example concerns a class of Hamiltonian systems, where the slow manifold loses its normal hyperbolicity due to a transcritical or pitchfork bifurcation as the slow variable evolves \cite{schecter2010heteroclinic}. In these systems, the heteroclinic orbits connecting specific equilibria represent a dynamic interface between ordered and disordered states.

%The evolution partial differential equations were used, for example to describe a free (boson) field in relativistic quantum mechanics \cite{grandy2012relativistic}, a hydromagnetic dynamo-process in cosmology \cite{nandy2010dynamo}, diffraction in random-heterogeneous media in statistical physics \cite{torquato1991random}, the dynamics of populations for models with a geographical structure in population genetics \cite{roderick1996geographic}, etc.
%The theory of stochastic evolution systems is quite a young branch of science but it has nevertheless generated many interesting and important results, much more than it would be reasonable to include in one book. The references \cite{da2014stochastic, vishik2012mathematical, vanden2000generalized} contain sections devoted to the theory.

%introduction
In real-world physical systems, stochastic perturbations, multiscale phenomena, and evolutionary processes often coexist, among which, the evolution partial differential equations were used, for example to describe a free (boson) field in relativistic quantum mechanics \cite{grandy2012relativistic}, a hydromagnetic dynamo-process in cosmology \cite{nandy2010dynamo}, diffraction in random-heterogeneous media in statistical physics \cite{torquato1991random}, etc. Consequently, many problems in the natural sciences lead to slow-fast stochastic evolution partial differential equations.
This paper 
%will establish the large deviation principle (LDP) for 
investigates the following slow-fast systems driven by Brownian motion (BM) and fractional Brownian motion (FBM) in $V\times V$
\begin{equation}\label{eqn-1.2}
	\begin{cases}
		\mathrm{d}X^{\varepsilon,\delta}_t=(AX^{\varepsilon,\delta}_t+b(X^{\varepsilon,\delta}_t,Y^{\varepsilon,\delta}_t))\mathrm{d}t+\sqrt\varepsilon g(X^{\varepsilon,\delta}_t)\mathrm{d}B^H_t,\\  
		\mathrm{d}Y^{\varepsilon,\delta}_t=\frac1\delta(AY^{\varepsilon,\delta}_t+F(X^{\varepsilon,\delta}_t,Y^{\varepsilon,\delta}_t))\mathrm{d}t+\frac{1}{\sqrt\delta} G(X^{\varepsilon,\delta}_t,Y^{\varepsilon,\delta}_t)\mathrm{d}W_t,\\
		X^{\varepsilon,\delta}_0=X_0,~ Y^{\varepsilon,\delta}_0=Y_0,~t\in[0,T],
	\end{cases}
\end{equation}	
where $V$ is a separable Hilbert space, $A$ is the infinitesimal generator of an analytic semigroup on $V$. Precise conditions on these coefficient functions will be specified later. The parameter $0<\delta\ll1$ characterizes the ratio of time-scales between processes $X^{\varepsilon,\delta}$ and $Y^{\varepsilon,\delta}$. The driving process $(B^H_t)_{t\in[0,T]}$ is a cylindrical FBM ($V$-valued FBM) with Hurst parameter $H\in(\frac12,1)$ and $(W_t)_{t\in[0,T]}$ is a $V$-valued BM.

%slow-fast system and averaging principle

Direct analysis of slow-fast systems is challenging due to the disparate time scales and the intricate coupling between variables.
The averaging principle (AP) provides a powerful tool for simplifying slow-fast systems with capturing the effective dynamics.
%As a key method in perturbation theory, the AP is an extension of the Strong Law of Large Numbers.
A key idea is that the rapidly varying process can be approximated as noise with an invariant measure. Utilizing this measure allows for an asymptotic analysis, which shows that the slow process converges to a limit corresponding to the average with respect to the stationary measure of the fast process.
Applying the AP yields the characteristics of the limiting slow dynamics. 
%This is an important topic in probability theory and has broad applications in fields such as thermodynamics, statistics, information theory, and engineering.
Xu and Duan et al. \cite{xu2011averaging} established the AP for SDEs with
non-Gaussian L\'{e}vy noise.
The AP for fast-slow systems driven by mixed fractional Brownian rough path is investigated \cite{pei2021averaging}.
Subsequently, Pei and Schmalfuss et al. \cite{pei2024almost} studied the slow-fast systems consisting of evolution equations, which differ from other works in that the fast component is also driven by FBM.
% which has a slow component driven by FBM with the Hurst parameter $H_1>\frac12$ and a fast component driven by additive FBM with the Hurst parameter $H_2 \in(1-H_1, 1)$.
Recently, the AP for a class of semilinear slow-fast PDEs driven by finite-dimensional rough multiplicative noise is investigated  \cite{liM2025averaging}.

Based on the analytical methods for infinite-dimensional FBM \cite{Anh1999,Tindel2003,Maslowski2003,Pei2020} and the framework of the AP, under appropriate conditions, it follows that slow-fast system \eqref{eqn-1.2} can be described by an averaged equation
\begin{align}\label{eq-2}
	\mathrm{d}\bar{X}_t=\big(A\bar{X}_t+\bar{b}(\bar{X}_t)\big)\mathrm{d}t,\quad  
	\bar{X}_0=X_0,\quad t\in[0,T],
\end{align}
where $\bar{b}(x)=\int_Vb(x,z)\mu^x(dz),~x\in V,$
and $\mu^x$ is the unique invariant measure of the transition semigroups for the following frozen equation
\begin{equation}
	\mathrm{d}Y^x_t=(AY^x_t+F(x,Y^x_t))\mathrm{d}t+G(x,Y^x_t)\mathrm{d}W_t,\quad Y^x_0=Y_0,\quad t\in[0,T].\label{eqn-1.3}\notag
\end{equation}
The averaged process $\bar{X}$ above is valid only in the limiting sense, but the slow process $X^{\varepsilon,\delta}$ will experience fluctuations around the corresponding averaged process
$\bar{X}$ with small parameter $\varepsilon$. In order to capture the fluctuations, it is important to explore the deviation between $X^{\varepsilon,\delta}$ and $\bar{X}$.

Compared with the AP, the large deviation principle (LDP) focuses on the exact asymptotic behavior of the probability decay rate of rare events. It is one of the most active research fields in probability, and has a wide range of applications in the fields of statistical inference, partial differential equations and statistical mechanics et al. \cite{Dembo1993,Deuschel1989,Dupuis1997,Varadhan2010,Touchette2009}. 
Similarly, the moderate deviation principle (MDP) investigates the rate of convergence of probabilities, with a particular focus on deviations of smaller orders \cite{morse2017moderate,yang2024moderate,gasteratos2023moderate,bourguin2024moderate}.
To study the fluctuations of the slow variable $X^{\varepsilon,\delta}$ of the slow-fast system \eqref{eqn-1.2} around the solution $\bar{X}$ to the deterministic averaged equation \eqref{eq-2}, we define the deviation component between $X^{\varepsilon,\delta}$ and $\bar{X}$ as follows
\begin{align}
	Z^{\varepsilon,\delta}_t=\frac{X^{\varepsilon,\delta}_t-\bar{X}_t}{\sqrt{\varepsilon}h(\varepsilon)},\quad Z^{\varepsilon,\delta}_0=0,\quad t\in[0,T].\label{eqn-6.1}
\end{align}
Here, $h : (0, 1] \to  (0, \infty)$ is continuous, and satisfies that $\lim_{\varepsilon \to 0}h(\varepsilon)=\infty$ and $\lim_{\varepsilon \to 0}\sqrt{\varepsilon}h(\varepsilon)=0$ for all $\varepsilon\in(0,1]$.
When $h(\varepsilon)=\frac{1}{\sqrt\varepsilon}$, the deviation component corresponds to the LDP, which characterizes the exponential decay rate of the probability of events far from the averaged system. When $\lim_{\varepsilon \to 0}\sqrt\varepsilon h(\varepsilon)=0$, the deviation component corresponds to the MDP. Specifically, when $h(\varepsilon)\equiv1$, the deviation component corresponds to the central limit theorem (CLT), which characterizes the concentration of probability near the averaged system.

%LDP references-BM

The LDP for systems was initially formulated by Freidlin and Wentzell \cite{Wentzell1984}, addressing the asymptotic probabilities of stochastic dynamical systems influenced by small noise over long times. 
Bou\'{e} and Dupuis \cite{Dupuis1998} applied weak convergence theory to establish the LDP under disturbance conditions. 
Initially introduced for stochastic differential equations (SDEs) driven by finite-dimensional BM, this method was later extended to infinite-dimensional BM in stochastic dynamical systems by Budhiraja and Dupuis \cite{Budhiraja2000, Budhiraja2008}. 
The weak convergence method has since been applied to various problems \cite{Budhiraja2019,Hu2019,Spiliopoulos2013,Wang2012}. 
Notably, Sun \cite{Sun2021} demonstrated the LDP for the slow-fast stochastic Burgers equation driven by infinite-dimensional BM.

%FBM
%existence and uniquness of the solution for SPDE

%Since the seminal works of Z\"{a}hler \cite{Zahle1998}, Decreusefond and \"{U}st\"{u}nel \cite{Decreusefond1999}, and Lyons \cite{Lyons1998}, research has increasingly focused on the development of stochastic integration with respect to FBM.  In settings characterized by higher regularity, specifically when $ H \in (\frac{1}{2}, 1) $,  pathwise methods are employed. These methods facilitate the pathwise estimate of integrals that involve both the integrand and the integrator, utilizing specialized norms. Initially, Nualart and Rascanu \cite{Nualart2002} and Garrido-Atienza et al. \cite{Garrido2010} established the existence of solutions to stochastic differential equations (SDEs) via pathwise integrals. This methodology has been successfully generalized from finite-dimensional frameworks to infinite-dimensional contexts, as demonstrated in the works of Grecksch and Anh \cite{Anh1999}, Tindel et al. \cite{Tindel2003}, Maslowski and Nualart \cite{Maslowski2003} and Pei et al. \cite{Pei2020}.

Unlike standard BM, the self-similar and long-range dependence properties of FBM make it a suitable candidate for modeling randomness in certain complex systems \cite{Biagini2008}.
The Hurst parameter $H$ of FBM characterizes the roughness of the motion \cite{Mishura2008}.
To date, only a few studies have investigated the LDP for finite-dimensional FBM.
Based on the variational representation for	random functionals on abstract Wiener spaces \cite{Zhang2009}, the weak convergence method for finite-dimensional FBM was proposed by
Budhiraja and Song \cite{Budhiraja2020}. Inahama and Xu et al. \cite{Inahama2023} established the LDP for slow-fast systems with mixed finite-dimensional FBM. Shen et al. \cite{Shen2024} developed the LDP for the multi-scale distribution dependent SDEs driven by additive fractional white noise. 
Recently, a LDP for the slow-fast systems under the controlled rough path framework is constructed  \cite{yang2025large}. Additionally,
a MDP for rough differential equations driven by scaled
fractional Brownian rough path
is established \cite{inahama2024moderate}.

%???????????????

A natural extension of these problems is the study of stochastic partial differential equations (SPDEs)	driven by infinite-dimensional FBM.
For a detailed study on the well-posedness of systems driven by infinite-dimensional FBM, the reader is referred to \cite{Anh1999,Tindel2003,Maslowski2003,Pei2020}.
%To the best of our knowledge, no LDP \textcolor{red}{or MDP} has been established for SPDEs driven by infinite-dimensional FBM. The aim of this paper is to establish the LDP \textcolor{red}{and MDP} for slow-fast systems driven by infinite-dimensional mixed FBM.
This paper aims to establish the LDP and MDP for slow-fast systems driven by infinite-dimensional mixed FBM. The LDP analysis in this case is more challenging than in the finite-dimensional setting, as certain key properties, such as the Arzela-Ascoli's theorem, no longer apply.
First, we present the variational representation formula for infinite-dimensional mixed FBM, based on that for infinite-dimensional BM \cite{Budhiraja2000,Budhiraja2008}. Subsequently, the focus shifts to the weak convergence of the controlled system.
Due to the absence of martingale properties in slow equations driven by FBM, classical martingale inequalities are no longer applicable. 
Furthermore, for the slow-fast systems, it is natural to investigate the influence of multi-scale effects on the system \eqref{eqn-1.2} as both $\varepsilon$ and $\delta$ tend to zero.  	
We combine the estimates of pathwise integrals using generalized Riemann-Stieltjes integral with appropriate semigroup estimates. Specifically, we apply the classical Khasminskii's time discretization approach to study the convergence of the controlled slow processes. Moreover, in the proof, due to the weaker assumptions of coefficients, we employ properties of generalized Riemann-Stieltjes integral to establish the convergence of the controlled systems. This approach bypasses the difficulties that arise from being unable to use the Arzela-Ascoli's theorem.

A brief outline of the paper is as follows. In Section \ref{2}, we introduce the notations used throughout the paper and present the variational representation for infinite-dimensional mixed FBM. Section \ref{3} outlines the necessary assumptions and provides a precise statement of the LDP for slow-fast systems driven by infinite-dimensional mixed FBM \textbf{(Theorem \ref{th3.1})}. In Section \ref{4}, we prove the preliminary lemmas. Section \ref{5} is dedicated to proving the main results. 
In Section \ref{6}, based on the LDP, We also extend to the MDP for the above slow-fast system \textbf{(Theorem \ref{th6.1})}.
In Section \ref{7}, we review the literature to discuss the feasibility and specific challenges involved in proving the LDP for fully coupled systems.
Appendix \ref{A} introduces the criterion for the LDP, while Appendix \ref{B} provides technical proofs.

\section{Notation and Preliminaries}\label{2}

Let $V=(V,\|\cdot\|,\langle~\cdot~\rangle)$ be a separable Hilbert space.
We denote the spaces of linear bounded and trace-class operators on $V$ by $L(V)$ and $L_1(V)$, respectively.
Let $L_2(V)$ denote the space of Hilbert-Schmidt operators from $V$ to itself, endowed
with the Hilbert-Schmidt norm $\|G\|_{HS}=\sqrt{\text{tr}(GG^*)}=\sqrt{\sum_{i}\|Ge_i\|^2}$, where  $\{e_i\}_{i=1}^\infty$ is an orthonormal basis of $V$.
In the following, we work on $t\in[0,T]$, $H\in(\frac12,1)$ and $\alpha\in(1-H,\frac12)$ unless otherwise stated.

\subsection{Fractional integrals and derivatives}

Now we introduce generalized the  Riemann-Stieltjes integral, some spaces and norms that will be used in this work.

Let $W^{\alpha,1}([0,T],V)$ be the space of measurable functions $f:[0,T]\to V$
such that 
$$\|f\|_{\alpha,1}:=\int_0^T\left(\frac{\|f(s)\|}{s^\alpha}+\int_{0}^{s}\frac{\|f(s)-f(\lambda)\|}{(s-\lambda)^{\alpha+1}}d\lambda\right)\mathrm{d}s<\infty.$$

For measurable functions $f:[0,T]\to V$, denote that $W^{\alpha,\infty}([0,T],V)$ with norm
$$\|f\|_{\alpha,\infty}:=\sup_{t\in[0,T]}\|f\|_{\alpha,[0,t]}<\infty,$$
where
$\|f\|_{\alpha,[0,t]}:=\|f(t)\|+\int_0^t\frac{\|f(t)-f(s)\|}{(t-s)^{\alpha+1}}\mathrm{d}s.$

Denote by $\mathcal{B}^{\alpha,2}([0,T],V)$ the space of measurable functions $f:[0,T]\to V$ endowed with the norm $\|\cdot\|^2_{\alpha,T}$ denoted by 
$$\|f\|^2_{\alpha,T}:=\sup_{t\in[0,T]}\|f(t)\|^2+\int_0^T\left(\int_0^t\frac{\|f(t)-f(s)\|}{(t-s)^{\alpha+1}}\mathrm{d}s\right)^2\mathrm{d}t<\infty.$$
It is easy to verify that $W^{\alpha,\infty}([0,T],V)\subset \mathcal{B}^{\alpha,2}([0,T],V)$.

Then, we introduce H\"{o}lder continuous path space. For $\eta\in(0,1]$, let $C^\eta([0,T],V)$ be the space of $\eta$-H\"{o}lder continuous functions $f:[0,T]\to V$ with the norm
$$\|f\|_{\eta-hld}:=\|f\|_\infty+\sup_{0\leq s <t \leq T}\frac{\|f(t)-f(s)\|}{(t-s)^\eta}<\infty,$$
where $\|f\|_\infty=\sup_{t\in[0,T]}\|f(t)\|$. 

Denote by $W^{1-\alpha,\infty}_T([0,T],V)$, the space of measurable functions $f:[0,T]\to V$ such that 
$$\|f\|_{1-\alpha,0,T}:=\sup_{0\leq s<t\leq T}\Big(\frac{\|f(t)-f(s)\|}{(t-s)^{1-\alpha}}+\int_s^t\frac{\|f(\zeta)-f(s)\|}{(\zeta-s)^{2-\alpha}}\mathrm{d}\zeta\Big)<\infty.$$ 
It is easy to verify that for any $\kappa \in(0,\alpha)$, $C^{1-\alpha+\kappa}\subset W^{\alpha,\infty}_T\subset C^{1-\alpha-\kappa}.$ 

Then, according to Z\"{a}hle \cite{Zahle1998}, for $0\leq s<t\leq T$, $f\in W^{\alpha,1}([0,T],V)$ and $g \in W^{1-\alpha,\infty}([0,T],V)$, the generalized Riemann-Stieltjes integrals
\begin{equation}\label{eqn-2.2} 
	\int_0^T f(r)\mathrm{d}g(r)=(-1)^\alpha\int_0^TD_{0+}^\alpha f(r)D_{T-}^{1-\alpha}g_{T-}(r)\mathrm{d}r,
\end{equation}
$$
\int_s^t f(r)\mathrm{d}g(r)=\int_0^T f(r)\mathbf{1}_{(s,t)}\mathrm{d}g(r),
$$
are defined, where $g_{T-}(t):=g(t)-g(T)$.
Recall that the Weyl derivatives
of $f$ and $g$ are defined respectively as follows,
\begin{align*}
	D_{a+}^{\alpha}f(t) :=\frac{1}{\Gamma(1-\alpha)}\left(\frac{f(t)}{(t-a)^\alpha}+\alpha\int_a^t\frac{f(t)-f(s)}{(t-s)^{\alpha+1}}\mathrm{d}s\right),
\end{align*}
$$
D_{b-}^{1-\alpha}g_{b-}(t) :=\frac{(-1)^{1-\alpha}}{\Gamma(\alpha)}\bigg(\frac{g(t)-g(b)}{(b-t)^{1-\alpha}}+{(1-\alpha)}\int_t^b\frac{g(t)-g(s)}{(s-t)^{2-\alpha}}\mathrm{d}s\bigg),
$$ 
for almost all $t\in(a,b)$, where $a,b\in\mathbb{R}$, $a<b$ and $\Gamma$ stands for the Euler Gamma function. Then, the integral $\eqref{eqn-2.2}$ has the following estimate
\begin{equation}\label{eqn-4}
	\Big\|\int_0^T f(t)\mathrm{d}g(t)\Big\|\le\frac{\left\|g\right\|_{1-\alpha,0,T}}{\Gamma(1-\alpha)\Gamma(\alpha)}\|f\|_{\alpha,1}.
\end{equation}
For the sake of shortness, we denote $\Lambda_{\alpha,g}^{0,T}:=\frac{\|g\|_{1-\alpha,0,T}}{\Gamma(1-\alpha)\Gamma(\alpha)}$.
For a detailed account on fractional integrals and derivatives we refer to \cite{Samko1993}.

We recall the following two auxiliary technical lemmas from \cite{Garrido2010}.
\begin{lemma}\label{lem2.1}
	For any positive constants $a$ and $d$, if $a+d-1>0$ and $a < 1$, one has
	$$
	\begin{aligned}\int_0^r(r-s)^{-a}(t-s)^{-d}\mathrm{d}s&\le(t-r)^{1-a-d}B(1-a,d+a-1),\\\int_r^t(s-r)^{-a}(t-s)^{-d}\mathrm{d}s&\le(t-r)^{1-a-d}B(1-a,d+a-1),\end{aligned}
	$$
	where $r\in(0,t)$ and $B$ is the Beta Function.
\end{lemma}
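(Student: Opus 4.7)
The plan is to reduce each of the two integrals to a Beta-type integral over $[0,1]$ via a suitable substitution, after which the claimed bound follows either by direct identification with the Beta function or by monotonicity of the integral once the integrand is shown to be non-negative.

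For the second integral $\int_r^t (s-r)^{-a}(t-s)^{-d}\,\mathrm{d}s$ I would use the linear rescaling $s = r + u(t-r)$, $u\in[0,1]$. Then $s-r = u(t-r)$, $t-s = (1-u)(t-r)$ and $\mathrm{d}s = (t-r)\,\mathrm{d}u$, so that the integral equals $(t-r)^{1-a-d}\int_0^1 u^{-a}(1-u)^{-d}\,\mathrm{d}u$, an explicit Beta value from which the stated bound is read off under the hypotheses on $a$ and $d$.

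For the first integral $\int_0^r (r-s)^{-a}(t-s)^{-d}\,\mathrm{d}s$ the substitution is less obvious. I would use the M\"obius-type map $u = (r-s)/(t-s)$, which sends $s\in[0,r]$ bijectively onto $u\in[0,r/t]$ and gives $r-s = u(t-r)/(1-u)$, $t-s = (t-r)/(1-u)$, and $|\mathrm{d}s|=(t-r)(1-u)^{-2}\,\mathrm{d}u$. After algebraic simplification the integral becomes $(t-r)^{1-a-d}\int_0^{r/t} u^{-a}(1-u)^{a+d-2}\,\mathrm{d}u$. Since the integrand is non-negative and $r/t<1$, extending the range to $[0,1]$ only enlarges the value, yielding $(t-r)^{1-a-d} B(1-a,a+d-1)$; the hypotheses $a<1$ and $a+d-1>0$ are exactly what ensure convergence of the limiting Beta integral at the endpoints $u=0$ and $u=1$ respectively.

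The main subtlety is spotting the correct substitution in the first integral: a naive linear rescaling such as $s=rw$ would leave the two singular factors $(r-s)$ and $(t-s)$ tangled, whereas the M\"obius-type choice $u=(r-s)/(t-s)$ simultaneously decouples both singularities into the canonical Beta form $u^{-a}(1-u)^{a+d-2}$, at which point monotonicity of the integral in its upper limit closes the argument. Everything else is a routine change of variable and bookkeeping of the signs.
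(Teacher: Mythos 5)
The paper itself gives no proof of this lemma — it is simply recalled from \cite{Garrido2010} — so your argument has to stand on its own. For the first integral it does: the substitution $u=(r-s)/(t-s)$ is carried out correctly, yields $(t-r)^{1-a-d}\int_0^{r/t}u^{-a}(1-u)^{a+d-2}\,\mathrm{d}u$, and enlarging the domain to $[0,1]$ gives exactly $(t-r)^{1-a-d}B(1-a,a+d-1)$, with $a<1$ and $a+d>1$ guaranteeing convergence at the two endpoints. This is equivalent to the usual argument (substituting $x=(r-s)/(t-r)$ and using $\int_0^\infty x^{-a}(1+x)^{-d}\,\mathrm{d}x=B(1-a,a+d-1)$), so that half is fine.

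The second integral is where there is a genuine gap. Your linear substitution correctly gives $\int_r^t(s-r)^{-a}(t-s)^{-d}\,\mathrm{d}s=(t-r)^{1-a-d}\int_0^1u^{-a}(1-u)^{-d}\,\mathrm{d}u=(t-r)^{1-a-d}B(1-a,1-d)$, but the bound to be proved carries the constant $B(1-a,d+a-1)$, and ``the stated bound is read off under the hypotheses'' does not bridge this. The stated hypotheses do not include $d<1$, so the integral (and $B(1-a,1-d)$) may be infinite; and even when $d<1$, since $B(x,y)$ is strictly decreasing in each argument, $B(1-a,1-d)\le B(1-a,d+a-1)$ holds precisely when $1-d\ge a+d-1$, i.e. $2d+a\le 2$, which the hypotheses do not guarantee. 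In fact the inequality as printed fails, e.g. for $a=\tfrac12$, $d=\tfrac9{10}$, where $B(\tfrac12,\tfrac1{10})>B(\tfrac12,\tfrac25)$. So the obstruction is not your substitution but the statement as transcribed: the version in the cited literature assumes in addition $d<1$ for the second integral and gives the exact constant $B(1-a,1-d)$ (and in all of the paper's applications, e.g. in Lemma \ref{lem4.3}, the relevant exponent $d$ is indeed below $1$, the Beta constants being absorbed into generic $C$). With the statement corrected in this way, your computation already completes the proof; as written, no argument can, and you should flag the extra condition rather than assert the bound can be read off.
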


\begin{lemma}\label{lem2.2}
	For any non-negative $a$ and $d$ such that $a + d < 1$, and for any $\rho \geq 1$, there exists a positive constant $C$ such that
	$$\int_0^te^{-\rho(t-r)}(t-r)^{-a}r^{-d}\mathrm{d}r\leq C\rho^{a+d-1}.$$
	%	In addition, for $d \leq 0$ and $0 \leq a < 1$, and for any $\rho \geq 1$, we have
	%	$$\int_0^te^{-\rho(t-r)}(t-r)^{-a}r^{-d}\mathrm{d}r\leq\Gamma(1-a)t^{-d}\rho^{a-1}.$$
\end{lemma}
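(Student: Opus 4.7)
The plan is to substitute $u = t - r$, which turns the integral into $\int_0^t e^{-\rho u} u^{-a}(t-u)^{-d}\,\mathrm{d}u$, and then split this at $u = t/2$ into $I_1 = \int_0^{t/2}$ and $I_2 = \int_{t/2}^t$. On each piece one of the problematic factors becomes harmless and can be replaced by a constant, leaving a genuinely tractable one-variable integral.

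For $I_2$, where $u \geq t/2$, I would use $u^{-a} \leq (t/2)^{-a}$ and $e^{-\rho u} \leq e^{-\rho t/2}$ to pull those factors out and then integrate $(t-u)^{-d}$ explicitly, producing a factor $(t/2)^{1-d}/(1-d)$. This gives a bound of the form $I_2 \leq C\, t^{1-a-d} e^{-\rho t/2}$. Setting $\tau = \rho t/2$, one rewrites $t^{1-a-d} = (2\tau/\rho)^{1-a-d}$, so $I_2 \leq C'\,\tau^{1-a-d} e^{-\tau}\,\rho^{a+d-1}$; since $1-a-d > 0$, the function $\tau \mapsto \tau^{1-a-d} e^{-\tau}$ is bounded on $[0,\infty)$ by its maximum at $\tau = 1-a-d$, yielding $I_2 \leq C''\rho^{a+d-1}$.

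For $I_1$, where $t-u \geq t/2$, I would first replace $(t-u)^{-d}$ by $(t/2)^{-d}$ and then estimate $\int_0^{t/2} e^{-\rho u} u^{-a}\,\mathrm{d}u$, splitting into two regimes. If $\rho t \geq 1$, I extend to $[0,\infty)$ and substitute $v = \rho u$, obtaining $\rho^{a-1}\Gamma(1-a)$; combined with the $(t/2)^{-d}$ prefactor this gives $I_1 \leq C\,(\rho t)^{-d}\rho^{a+d-1} \leq C\rho^{a+d-1}$, using $(\rho t)^{-d} \leq 1$ in this regime and $d \geq 0$. If $\rho t < 1$, the exponential is useless, so I drop it and integrate $u^{-a}$ directly to get $I_1 \leq C t^{1-a-d}$; rescaling once more as $t^{1-a-d} = (\rho t)^{1-a-d}\rho^{a+d-1} \leq \rho^{a+d-1}$ (since $\rho t < 1$ and $1-a-d > 0$) closes the argument. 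Adding the bounds on $I_1$ and $I_2$ gives the claim. The main, though mild, obstacle is recognising that a single uniform estimate on $I_1$ does not suffice: the exponential factor only contributes when $\rho t$ is large, so the two regimes must be treated separately, and the key algebraic identity that makes everything fit is $t^{\beta} = (\rho t)^{\beta}\rho^{-\beta}$ with appropriate choices of $\beta$.
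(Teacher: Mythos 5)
Your proof is correct: the substitution $u=t-r$, the split at $u=t/2$, and the two-regime treatment of $I_1$ (via $\rho t\geq 1$ versus $\rho t<1$, using the rescaling $t^{\beta}=(\rho t)^{\beta}\rho^{-\beta}$ together with $a+d<1$) give a valid, self-contained bound that is even uniform in $t>0$. Note that the paper itself does not prove Lemma \ref{lem2.2}; it is recalled from \cite{Garrido2010}, so there is no in-paper argument to compare against, but your elementary splitting argument is exactly the standard way such an estimate is established.
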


\subsection{Mixed fractional Brownian motion}
\setlength{\parindent}{2em} In this subsection, we introduce a $V\times V$-valued mixed FBM of the Hurst parameter $H$ and recall some basic facts on it for later use. Let $(\Omega,\mathcal{F},\mathbb{P})$ be a suitable probability space with a filtration satisfying the usual condition.

%defination of FBM

Let $\beta^H=(\beta_t^H)_{t\in[0,T]}$ denote a one-dimensional FBM with the Hurst parameter $H$. That is, $\beta^H$ is a centered Gaussian process with the covariance function (cf. \cite{Mandelbrot1968})
$$\mathbb{E}\left[\beta^H_t\beta^H_s\right]=\frac12\left(t^{2H}+s^{2H}-|t-s|^{2H}\right).$$

For $b\in W^{\alpha,1}([0,T],V)$, the integral
$\int_0^T b(s)\mathrm{d}\beta^H_s$
is understood in the sense of definition  \eqref{eqn-2.2} pathwise, which makes sense because $\mathbb{E}[\Lambda_{\alpha,\beta^{H,i}}^{0,T}]$ is a constant which is finite (cf. \cite{Nualart2002}). Giving a orthonormal basis $\{e_i\}^{\infty}_{i=1}$ of $V$, a bounded sequence of non-negative numbers $\{\lambda_i\}^{\infty}_{i=1}$ and a sequence of independent one-dimensional FBM $\{\beta^{H,i}\}^{\infty}_{i=1}$. The cylindrical FBM $B^H$ is defined by the formal sum
\begin{equation}\label{eqn-2.3} 
	B^H:=\sum_{i=1}^\infty\sqrt{\lambda_i}e_i\beta^{H,i}
\end{equation}	
and its incremental covariance operator $Q_1$ is a bounded, non-negative and self-adjoint linear operator on $V$ which is given by $Q_1e_i=\lambda_ie_i$, $i\in\mathbb{N}$. Moreover, $Q_1\in L_1(V)$, i.e. $\text{tr}(Q_1):=\sum_{i=1}^\infty\lambda_i<\infty$. Notice that the above series is convergent in $L^2(\Omega,\mathcal{F},\mathbb{P})$ 
from the fact that
$\sum_{i=1}^\infty\lambda_i<\infty$ and $\mathbb{E}[(\beta^{H,i}_t)^2]=t^{2H}$, $t\in\mathbb{R}$.

For a standard scalar BM $\beta=(\beta_t)_{t\in[0,T]}$, define $\beta^H$ by
$\beta^H_t=\int_{0}^{T}K_H(t,s)\mathrm{d}\beta_s,$
where we set for all $0\leq s\leq t\leq T$,
$K_H(t,s):= k_H(t,s)\textbf{1}_{[0,t]}(s),$
with
$$k_H(t,s):=\frac{c_H}{\Gamma\left(H+\frac12\right)}(t-s)^{H-\frac12}F\Big(H-\frac12,\frac12-H,H+\frac12;1-\frac ts\Big),$$
where $c_H=\Big[\frac{2H\Gamma\left(\frac32-H\right)\Gamma\left(H+\frac12\right)}{\Gamma(2-2H)}\Big]^{\frac12}$ and $F$ is the Gauss hypergeometric function.
Then, we have
\begin{align}\label{eqn-2}
	B_t^H=\sum_{i=1}^{\infty}\sqrt{\lambda_i}e_i
	\int_{0}^{T}K_H(t,s)\mathrm{d}\beta^i_s=\int_{0}^{T}K_H(t,s)\mathrm{d}B_s,\quad t\in[0,T],
\end{align}
where $\{\beta^i\}_{i=1}^\infty$ is a sequence of independent standard BM and $B=\sum_{i=1}^{\infty}\sqrt{\lambda_i}e_i\beta^i$ is a $V$-valued BM ($Q_1$-Wiener process) (cf. \cite{Budhiraja2008}).

Let $G:\Omega\times[0,T]\to L(V)$ be an operator-valued map such that $G(\omega,\cdot)e_i\in W^{\alpha,1}([0,T],V)$ for each $i\in\mathbb{N}$ and almost $\omega\in\Omega$. We define
\begin{equation}\label{eqn-2.5} 
	\int_0^TG(\omega,s)\mathrm{d}B_s^H:=\sum_{i=1}^\infty\int_0^TG(\omega,s)Q_1^\frac12e_i\mathrm{d}\beta^{H,i}_s=\sum_{i=1}^\infty\sqrt{\lambda_i}\int_0^TG(\omega,s)e_i\mathrm{d}\beta^{H,i}_s,
\end{equation}
where the convergence of the sums in  $\eqref{eqn-2.5}$ is understood as $\mathbb{P}$-a.s. convergence in $V$ (cf. \cite{Duncan2002}).

Furthermore, we assume that $Q_1^{\frac12}\in L_1(V)$ to make the pathwise integral $\eqref{eqn-2.5}$ well-defined. The following result can be found in \cite[Proposition 2.1]{Maslowski2003}.

\begin{remark}\label{re2.1}
	Assume that $Q_1^{\frac12}\in L_1(V)$. Then, there exists $\Omega_1\subset\Omega$, $\mathbb{P}(\Omega_1)=1$, such that for any $G:\Omega\times[0,T]\to L(V)$ satisfying $G(\omega,\cdot)e_i\in W^{\alpha,1}([0,T],V)$ with $\omega\in\Omega_1$ and $\sup_{i\in\mathbb{N}}\|G(\omega,\cdot)e_i\|_{\alpha,1}<\infty$, the pathwise integral $\eqref{eqn-2.5}$ is well-defined on $\Omega_1$. In addition,
	$$\Big\|\int_0^TG(\omega,s)\mathrm{d}B_s^H\Big\|\leq\Lambda_{\alpha,B^H}^{0,T}\sup_{i\in\mathbb{N}}\|G(\omega,\cdot)e_i\|_{\alpha,1},\quad\omega\in\Omega_1,$$
	where $\Lambda_{\alpha,B^H}^{0,T}:=\sum_{i=1}^\infty\sqrt{\lambda_i}\Lambda_{\alpha,\beta^{H,i}}^{0,T}$. Note that $\Lambda_{\alpha,B^H}^{0,T}$ is finite a.s.\\
\end{remark}

Define the integral operator $\mathbb{K}_H$ induced from the kernel $K_H$ by
$\mathbb{K}_Hf(t)=\int_{0}^{T}K_H(t,s)f(s)\mathrm{d}s,$
for $f\in L^2([0,T],V)$. %\textcolor{red}{It is well known that $\mathbb{K}_H:L^2\left([0,T],V\right)\to I_{0+}^{H+\frac12}\left(L^2\left([0,T],V\right)\right)$ is a bijection.}

%Cameron-Martin space of FBM

Let $V_{1}=Q_1^{\frac{1}{2}}V$, then $V_1$ is a Hilbert space with the inner product
$\langle h,k\rangle_{1}:=\langle Q_1^{-\frac12}h,Q_1^{-\frac12}k\rangle$, for $h,k\in V_1$.
Let $\|\cdot\|_{1}$ denote the norm in the Hilbert space $V_1$.

Since $Q_1^{\frac12}$ is a trace-class operator, the identity mapping from $V_1$ to $V$ is Hilbert-Schmidt. The Cameron-Martin Hilbert space $\mathcal{H}^H=\mathcal{H}^H([0,T],V_1)$ for $(B^H_t)_{t\in [0,T]}$ is defined by
$
\mathcal{H}^H=\{\mathbb{K}_H\dot{h}:\dot{h}\in L^2([0,T],V_1)\}.
$ 
Note that $\dot{h}$ is the weak derivative of $h$. It should be recalled that $\mathcal{H}^H\subset C^{H'}([0,T],V_1)$ for all
$H'\in(0,H)$. The inner product on $\mathcal{H}^H$ is defined by
$\langle h,g\rangle_{\mathcal{H}^H}=\langle\mathbb{K}_H\dot{h},\mathbb{K}_H\dot{g}\rangle_{\mathcal{H}^H}:=\langle\dot{h},\dot{g}\rangle_{L^2([0,T],V_1)}.$

For any $h=\mathbb{K}_H\dot{h}\in\mathcal{H}^H$, $h$ is differentiable and 
\begin{align}
	h'(t)=\frac{c_Ht^{H-\frac12}}{\Gamma\left(H-\frac12\right)}\int_0^t(t-s)^{H-\frac32}s^{\frac12-H}\dot{h}(s)\mathrm{d}s.\label{eqn-2.6}
\end{align}

% Lemmas ablout FBM
\begin{lemma}\label{lem4.1}
	Let $h\in\mathcal{H}^H$. Then, $ Q_1^{-\frac12}h\in C^H([0,T],V)$ and $\|Q_1^{-\frac12}h\|_{H-hld}\leq C\|h\|_{\mathcal{H}^H}$.
	Here, $C$ is a positive constant which depends only on $T$ and $H$.
\end{lemma}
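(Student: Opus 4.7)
The plan is to unwind the definition of $\mathcal{H}^H$ to reduce the assertion to a kernel estimate for the Volterra operator $\mathbb{K}_H$ acting on an $L^2([0,T],V)$ function. Namely, for $h=\mathbb{K}_H\dot h\in\mathcal{H}^H$ with $\dot h\in L^2([0,T],V_1)$, the definition of the norm on $V_1$ gives $\dot h=Q_1^{1/2}\tilde h$ with $\tilde h\in L^2([0,T],V)$ and $\|\tilde h\|_{L^2([0,T],V)}=\|h\|_{\mathcal{H}^H}$. Pulling $Q_1^{1/2}$ (a bounded, self-adjoint, scalar-commuting factor) out of the kernel integral,
$$
Q_1^{-\frac12}h(t)=\int_0^t K_H(t,s)\,\tilde h(s)\,\mathrm{d}s,
$$
so the whole statement is reduced to a Hölder bound for the $V$-valued Volterra transform with kernel $K_H$.

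The main ingredient I would use is the isometry identity for the FBM kernel: for $0\le s<t\le T$,
$$
\int_0^{t\wedge s}K_H(t,r)K_H(s,r)\,\mathrm{d}r=R_H(s,t)=\tfrac12\bigl(t^{2H}+s^{2H}-(t-s)^{2H}\bigr),
$$
which in particular yields $\int_0^t K_H(t,r)^2\mathrm{d}r=t^{2H}$ and, by polarization,
$$
\int_0^s\bigl[K_H(t,r)-K_H(s,r)\bigr]^2\mathrm{d}r+\int_s^t K_H(t,r)^2\mathrm{d}r=(t-s)^{2H}.
$$

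Using this, I would split
$$
Q_1^{-\frac12}h(t)-Q_1^{-\frac12}h(s)=\int_s^t K_H(t,r)\tilde h(r)\,\mathrm{d}r+\int_0^s\bigl[K_H(t,r)-K_H(s,r)\bigr]\tilde h(r)\,\mathrm{d}r,
$$
apply the Cauchy--Schwarz inequality to each term (with the scalar kernel against the $V$-valued function $\tilde h$), and bound both $L^2$ kernel integrals by the identity above. This yields
$$
\|Q_1^{-\frac12}h(t)-Q_1^{-\frac12}h(s)\|^2\le 2(t-s)^{2H}\,\|\tilde h\|_{L^2([0,T],V)}^2=2(t-s)^{2H}\,\|h\|_{\mathcal{H}^H}^2,
$$
giving the desired $H$-Hölder seminorm bound with $C=\sqrt2$. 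The sup-norm piece is easier: Cauchy--Schwarz together with $\int_0^t K_H(t,r)^2\mathrm{d}r=t^{2H}$ gives $\|Q_1^{-1/2}h(t)\|\le T^H\|h\|_{\mathcal{H}^H}$. Adding the two estimates produces the claimed bound on $\|Q_1^{-1/2}h\|_{H\text{-}hld}$ with a constant depending only on $T$ and $H$.

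The main (really the only nontrivial) obstacle is invoking the kernel identity above; the rest is Cauchy--Schwarz on a $V$-valued integral with a scalar kernel. One could alternatively try to derive the bound directly from the hypergeometric representation of $k_H$ together with Lemma \ref{lem2.1}, but routing through the covariance/isometry identity is much cleaner and immediately delivers the sharp exponent $H$ rather than $H-\kappa$.
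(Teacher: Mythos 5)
Your proof is correct, but it takes a different (purely deterministic) route from the paper. The paper's argument is probabilistic: it represents $h(t)=\mathbb{E}\big[B^H_t\int_0^T\dot h(s)\,\mathrm{d}B_s\big]$ via the reproducing-kernel structure, applies Cauchy--Schwarz in $L^2(\Omega)$, and then invokes the covariance bounds $\mathbb{E}[\|B^H_t\|^2]\le C\,\mathrm{tr}(Q_1)\,t^{2H}$ and $\mathbb{E}[\|B^H_t-B^H_s\|^2]\le C\,\mathrm{tr}(Q_1)\,(t-s)^{2H}$ for the cylindrical FBM. You instead factor out $Q_1^{1/2}$, split the increment into $\int_s^t K_H(t,r)\tilde h(r)\,\mathrm{d}r+\int_0^s[K_H(t,r)-K_H(s,r)]\tilde h(r)\,\mathrm{d}r$, and use Cauchy--Schwarz in $L^2([0,T])$ together with the kernel isometry $\int_0^{t\wedge s}K_H(t,r)K_H(s,r)\,\mathrm{d}r=R_H(s,t)$ and its polarized form. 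The two arguments hinge on the same underlying fact -- that $K_H$ reproduces the FBM covariance $R_H$, which the paper takes for granted when it declares $\beta^H_t=\int_0^T K_H(t,s)\,\mathrm{d}\beta_s$ to be an FBM -- so your invocation of the isometry identity is on the same footing as the paper's use of the covariance. What your version buys is that it never passes through the cylindrical process or $L^2(\Omega)$, yields the sharp exponent with an explicit constant $\sqrt2+T^H$, and in particular avoids the $\mathrm{tr}(Q_1)^{1/2}$ factor that appears in the paper's intermediate bounds, so the constant genuinely depends only on $T$ and $H$ as the lemma states; the paper's version is slightly shorter because the cylindrical FBM moment estimates are needed elsewhere anyway.
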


\begin{lemma}\label{lem4.2}
	Assume that $Q_1^{\frac12}\in L_1(V)$, or equivalently,
	$\sum_{i=1}^\infty\sqrt{\lambda_i}<\infty.$
	for any $G:[0,T]\to L(V)$ satisfying for $i\in\mathbb{N}$, $G e_i\in W^{\alpha,1}([0,T],V)$ and 
	$\sup_{i\in\mathbb{N}}\|Ge_i\|_{\alpha,1}<\infty$.
	Then, for any $u\in\mathcal{H}^H$, the pathwise integral
	\begin{align}
		\int_0^T G( s) \mathrm{d}u_s:=\sum_{i=1}^{\infty}\sqrt{\lambda_i}\int_0^T G( s)e_i \mathrm{d}\big(Q_1^{-\frac12}u_se_i \big),\label{eqn-9}
	\end{align}
	is well defined. In addition, we have
	\begin{align}
		\Big\|\int_0^TG( s)\mathrm{d}u_s\Big\|\leq\Lambda_{\alpha, u}^{0, T}\sup_{i\in\mathbb{N}}\|Ge_i\|_{\alpha, 1}.\label{eqn-2.11}
	\end{align}
\end{lemma}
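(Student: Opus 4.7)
The plan is to parallel the proof of Proposition 2.1 of Maslowski-Nualart cited in Remark \ref{re2.1}, replacing the cylindrical FBM sample path $B^H$ with a generic Cameron-Martin element $u \in \mathcal{H}^H$. The key observation is that, by Lemma \ref{lem4.1}, $Q_1^{-1/2}u$ enjoys H\"older regularity in $V$, which plays the role of the pathwise regularity used for $B^H$. I would first decompose $u$ coordinate-wise in the orthonormal basis $\{e_i\}$, then apply the Z\"ahle scalar estimate \eqref{eqn-4} termwise, and finally sum with the weights $\sqrt{\lambda_i}$, relying on $\sum_i \sqrt{\lambda_i}<\infty$ for absolute convergence.

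Concretely, for each $i$ set $u^{(i)}(s) := \langle Q_1^{-1/2}u_s, e_i\rangle$, so that $u^{(i)}$ is the scalar function appearing inside the differential $d(Q_1^{-1/2}u_s e_i)$ in \eqref{eqn-9}. Using $|u^{(i)}(t)-u^{(i)}(s)| \le \|Q_1^{-1/2}(u_t-u_s)\|$ together with Lemma \ref{lem4.1} yields $\|u^{(i)}\|_{H-hld}\le C\|u\|_{\mathcal{H}^H}$, uniformly in $i$. Choosing $\kappa\in(0,\alpha-(1-H))$, which is possible since $\alpha>1-H$, the inclusion $C^{1-\alpha+\kappa}\subset W^{1-\alpha,\infty}_T$ noted earlier places each $u^{(i)}$ in $W^{1-\alpha,\infty}([0,T])$. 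Since $Ge_i\in W^{\alpha,1}([0,T],V)$ by hypothesis, the scalar generalized Riemann-Stieltjes integral $\int_0^T G(s)e_i\, du^{(i)}(s)$ is well-defined in the sense of \eqref{eqn-2.2}, and \eqref{eqn-4} gives the termwise bound $\|\int_0^T G(s)e_i\, du^{(i)}(s)\|\le \Lambda_{\alpha,u^{(i)}}^{0,T}\|Ge_i\|_{\alpha,1}$.

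Multiplying by $\sqrt{\lambda_i}$, summing over $i$, and factoring out $\sup_{i\in\mathbb{N}}\|Ge_i\|_{\alpha,1}$, the bound \eqref{eqn-2.11} follows once we identify $\Lambda_{\alpha,u}^{0,T}=\sum_i \sqrt{\lambda_i}\Lambda_{\alpha,u^{(i)}}^{0,T}$, by direct analogy with the definition in Remark \ref{re2.1}. Absolute convergence of the defining series \eqref{eqn-9} in $V$, and hence well-definedness of the pathwise integral, then follows from the uniform H\"older bound $\Lambda_{\alpha,u^{(i)}}^{0,T}\le C\|u\|_{\mathcal{H}^H}$ combined with the trace-class assumption $Q_1^{1/2}\in L_1(V)$, i.e., $\sum_i\sqrt{\lambda_i}<\infty$. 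The main obstacle is largely bookkeeping rather than deep: one must juggle the parameter constraint $H>1-\alpha$ to route the regularity given by Lemma \ref{lem4.1} through the appropriate H\"older embedding into $W^{1-\alpha,\infty}$ required by Z\"ahle's framework, and then verify the scalar estimate \eqref{eqn-4} is compatible with the weighted summation so that the resulting infinite-dimensional bound mirrors the finite-dimensional one with the correct constant.
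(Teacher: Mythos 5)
Your proposal is correct and follows essentially the same route as the paper's proof in Appendix \ref{secA.3}: coordinate-wise decomposition of $Q_1^{-\frac12}u$, the uniform bound $\sup_{i}\Lambda_{\alpha,Q_1^{-1/2}ue_i}^{0,T}\le C\|u\|_{\mathcal{H}^H}$ obtained from Lemma \ref{lem4.1} through the H\"older embedding into $W^{1-\alpha,\infty}_T$ (using $\alpha>1-H$), the termwise Z\"ahle estimate \eqref{eqn-4}, and summation weighted by $\sqrt{\lambda_i}$ with the trace-class condition giving convergence of \eqref{eqn-9} and the bound \eqref{eqn-2.11}.
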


We present the complete proofs of Lemmas \ref{lem4.1} and \ref{lem4.2} in Appendix \ref{secA.2} and \ref{secA.3}.

%defination of BM

We also consider a $V$-valued BM $(W_t)_{t\in[0,T]}$ generated by $Q_2$, which is also a bounded, non-negative, self-adjoint and trace-class operator on $V$. Throughout, $(W_t)_{t\in[0,T]}$ and $(B^H_t)_{t\in[0,T]}$ are assumed to be independent.

Let $V_{2}=Q_2^{\frac{1}{2}}V$, then $V_2$ is a Hilbert space with the inner product
$\langle h,k\rangle_{2}:=\langle Q_2^{-\frac12}h,Q_2^{-\frac12}k\rangle$, for $h,k\in V_2$.
Let $\|\cdot\|_2$ denote the norm in the Hilbert space $V_2$. The Cameron-Martin Hilbert space $\mathcal{H}^\frac12=\mathcal{H}^\frac12([0,T],V_2)$ for $(W_t)_{t\in[0,T]}$  is defined by 
$\mathcal{H}^\frac12:=\{v=\int_0^\cdot v_s'\mathrm{d}s\in C([0,T],V_2):v'\in L^2([0,T],V_2)\}.$
The inner product of $\mathcal{H}^\frac12$ is defined by $\langle v,w\rangle_{\mathcal{H}^\frac12}:=\int_0^T\langle v_t',w_t'\rangle_2\mathrm{d}t.$

The infinite-dimensional process $(B^H_t,W_t)_{t\in[0,T]}$ is called mixed FBM of Hurst parameter $H$. 
Then, $\mathcal{H}:=\mathcal{H}^H\oplus\mathcal{H}^\frac12$ is the Cameron-Martin subspace for mixed FBM $(B^H_t,W_t)_{t\in[0,T]}$.

\subsection{Variational representation for mixed FBM}

For $N\in\mathbb{N}$, we set
$$\mathcal{S}_N=\left\{(u,v)\in\mathcal{H}:\frac12\|(u,v)\|^2_\mathcal{H}:=\frac12(\|u\|^2_{\mathcal{H}^H}+\|v\|^2_{\mathcal{H}^\frac12})\leq N\right\}.$$  
Equipped with the weak topology, the ball $\mathcal{S}_N$ can be metrized as a compact Polish space (a complete separable metric space).

Let $(\Omega,\mathcal{F},\mathbb{P})$ be the infinite-dimensional classical Wiener space, where $\Omega=C_0([0,T],V\times V)$ is a space of $V\times V$-valued continuous paths starting at 0, equipped with the uniform topology. The  Wiener measure $\mathbb{P}$ is defined on $\Omega$, and $\mathcal{F}$ is the $\mathbb{P}$-completion of the Borel $\sigma$-field on $\Omega$. The coordinate process  $(B_t,W_t)_{t\in[0,T]}$ represents a standard $V\times V$-valued BM under $\mathbb{P}$.  Let $\{\mathcal{F}_t\}_{t\geq0}$ denoted the natural augmented filtration, where $\mathcal{F}_t:=\sigma\{(B_s,W_s):0\leq s\leq t\}\vee\mathcal{N}$,
and $\mathcal{N}$ is the collection of all $\mathbb{P}$-negligible sets.
By constructing $B^H$ from $B$ using  $\eqref{eqn-2}$, we obtain a FBM $(B^H_t)_{t\in[0,T]}$ with the Hurst parameter $H$ defined on $(\Omega,\mathcal{F},\mathbb{P})$.  It is known that for all $t$,
$\mathcal{F}_t:=\sigma\{(B^H_s,W_s):0\leq s\leq t\}~\vee\mathcal{N}$.
The subsequent work in this paper will be carried out within the probability space $(\Omega,\mathcal{F},\mathbb{P})$ as defined above.

We denote $\mathcal{B}_b^N$, $N\in\mathbb{N}$, the set of all $V_1\times V_2$-valued $\{\mathcal{F}_t\}$-progressively measurable processes
$(\phi_t,\psi_t)_{t\in[0,T]}$ satisfying that
$\frac{1}{2}\int_{0}^{T}(\|\phi_s\|^2_1+\|\psi_s\|^2_2)\mathrm{d}s\leq N,~\mathbb{P}\text{-a.s.}$
We set $\mathcal{B}_b=\bigcup_{0<N<\infty}\mathcal{B}_b^N$.

Let $\mathcal{A}_b^N$ for $N\in\mathbb{N}$ denote the set of all $V_1\times V_2$-valued $\{\mathcal{F}_t\}$-progressively measurable processes $(u_t,v_t)_{t\in[0,T]}$ satisfying that
$(u,v)=(\mathbb{K}_H\dot{u},\int_0^\cdot v_s'\mathrm{d}s),~\mathbb{P}$-a.s., for some  $(\dot{u},v')\in\mathcal{B}_b^N$.
We set $\mathcal{A}_b=\bigcup_{0<N<\infty}\mathcal{A}_b^N$. Every $(u,v)\in\mathcal{A}_b^N$ can be viewed as an $\mathcal{S}_N$-valued random variable. Since $\mathcal{S}_N$ is compact, $\{\mathbb{P}\circ(u,v)^{-1}:(u,v)\in\mathcal{A}_b^N\}$ is automatically tight. By the Girsanov's formula \cite[Theorem 10.18]{Prato2014}, the law of the process $(B^H_t+u_t,W_t+v_t)_{t\in[0,T]}$ is mutually absolutely continuous to that of $(B^H_t,W_t)_{t\in[0,T]}$ for every $(u,v)\in\mathcal{A}_b^N$.

A variational representation formula for $V\times V$-valued mixed FBM is now given.
\begin{lemma}\label{lem2.3}
	Let $F$ be a bounded, Borel measurable function mapping $C([0, T] ,
	V\times V)$ into $\mathbb{R}$. Then
	$$-\log \mathbb{E}\left[\exp\{-F(B^H,W)\}\right]\\
	=\inf_{(u,v)\in\mathcal{A}_b}\mathbb{E}\left[F\left(B^H+u,W+v\right)+\frac12\|(u,v)\|_\mathcal{H}^2\right].$$
\end{lemma}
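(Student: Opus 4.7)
The plan is to reduce the variational representation for infinite-dimensional mixed FBM to the Budhiraja--Dupuis formula for the $V\times V$-valued BM $(B,W)$ (cf.\ \cite{Budhiraja2000,Budhiraja2008}), by exploiting that $B^H$ is a deterministic measurable functional of the underlying BM $B$ via the kernel in \eqref{eqn-2}. Concretely, I would fix a Borel-measurable version $\mathcal{I}_H:C([0,T],V)\to C([0,T],V)$ of the map $B\mapsto B^H=\int_0^\cdot K_H(\cdot,s)\,\mathrm{d}B_s$, and define $\tilde F(B,W):=F(\mathcal{I}_H(B),W)$, which is bounded Borel on $C([0,T],V\times V)$.

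Applying the Budhiraja--Dupuis representation to the joint BM $(B,W)$ with trace-class covariance $\mathrm{diag}(Q_1,Q_2)$ then yields
\begin{align*}
-\log\mathbb{E}\bigl[e^{-F(B^H,W)}\bigr]=\inf_{(\phi,\psi)\in\mathcal{B}_b}\mathbb{E}\Bigl[&\tilde F(B+\Phi,W+\Psi)\\
&+\tfrac12\int_0^T(\|\phi_s\|_1^2+\|\psi_s\|_2^2)\,\mathrm{d}s\Bigr],
\end{align*}
where $\Phi(t):=\int_0^t\phi_s\,\mathrm{d}s$ and $\Psi(t):=\int_0^t\psi_s\,\mathrm{d}s$. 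By linearity of the Wiener integral against $K_H$,
$$\mathcal{I}_H(B+\Phi)(t)=B^H_t+\int_0^T K_H(t,s)\phi_s\,\mathrm{d}s=B^H_t+(\mathbb{K}_H\phi)(t)\quad\mathbb{P}\text{-a.s.}$$
Setting $u:=\mathbb{K}_H\phi$ and $v:=\Psi$, the integrand becomes $F(B^H+u,W+v)+\tfrac12\|(u,v)\|_\mathcal{H}^2$, since the Cameron--Martin isometries $\|\mathbb{K}_H\phi\|_{\mathcal{H}^H}^2=\int_0^T\|\phi_s\|_1^2\,\mathrm{d}s$ and $\|\Psi\|_{\mathcal{H}^{1/2}}^2=\int_0^T\|\psi_s\|_2^2\,\mathrm{d}s$ are built into the definitions of these spaces.

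The final step is to note that $(\phi,\psi)\mapsto(u,v)$ is precisely the bijection between $\mathcal{B}_b^N$ and $\mathcal{A}_b^N$ implicit in the paper's definition of $\mathcal{A}_b$, so reindexing the infimum over $\mathcal{A}_b$ produces the claimed identity. The only delicate point is justifying the $\mathbb{P}$-a.s.\ linearity $\mathcal{I}_H(B+\Phi)=B^H+\mathbb{K}_H\phi$ for the Borel version $\mathcal{I}_H$: under the shifted measure this is immediate from the definition of the stochastic integral, and Girsanov's theorem (applied as noted in the paragraph preceding the lemma) transfers it to a $\mathbb{P}$-a.s.\ identity through mutual absolute continuity of the laws. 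With this in hand, everything else is bookkeeping within the established Budhiraja--Dupuis framework.
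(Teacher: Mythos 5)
Your proposal is correct and follows essentially the same route as the paper: apply the Budhiraja--Dupuis representation to the composite functional $(B,W)\mapsto F(B^H,W)$, use the kernel relation \eqref{eqn-2} to turn the Cameron--Martin shift of $B$ into the shift $B^H+\mathbb{K}_H\phi$, and reindex the infimum via the correspondence $(\dot u,v')\in\mathcal{B}_b\leftrightarrow(u,v)\in\mathcal{A}_b$, with the a.s.\ identification handled by Girsanov's mutual absolute continuity exactly as in the paper's argument.
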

\begin{proof}
	From Budhiraja \cite[Theorem 3.6]{Budhiraja2000}, for any bounded Borel measurable function $F: C([0, T] ,V\times V)\to \mathbb{R}$, we have
	\begin{align*}
		&-\log \mathbb{E}\left[\exp\{-F(B,W)\}\right]\\
		=&\inf_{(\phi,\psi)\in\mathcal{B}_b}\mathbb{E}\bigg[F\Big(B+\int_0^\cdot \phi_s\mathrm{d}s,W+\int_0^\cdot \psi_s\mathrm{d}s\Big)+\frac12\|(\phi,\psi)\|_{L^2([0,T],V_1\times V_2)}^2\bigg].
	\end{align*}
	Note that $F$ can be taken as a $\mathbb{P}$-equivalence class of bounded measurable functions, rather than being defined everywhere. Specifically, if $F=\hat{F}$, $\mathbb{P}$-a.s. (i.e. $F(B,W)=\hat{F}(B,W),~\mathbb{P}$-a.s.), then we have $F(B+\int_{0}^{\cdot}\phi_s\mathrm{d}s,W+\int_{0}^{\cdot}\psi_s\mathrm{d}s)=\hat{F}(B+\int_{0}^{\cdot}\phi_s\mathrm{d}s,W+\int_{0}^{\cdot}\psi_s\mathrm{d}s)$, $\mathbb{P}$-a.s.,  for any $(\phi,\psi)\in\mathcal{B}_b$. This follows from the mutual absolute continuity mentioned above.

	Finally, by letting $F$ be the measurable map $(B,W)\mapsto\Phi(B^H,W)$ and replacing the symbol $(\phi,\psi)$ by $(\dot{u},v')$, we obtain the desired formula. This completes the proof.
\end{proof}

\section{Assumptions and Statements of Main Result}\label{3}

Let $A$ be an infinitesimal generator of an analytic semigroup $S$ on $V$. Assume that $-A$ has discrete spectra $0<\bar{\lambda}_1<\bar{\lambda}_2<\cdots<\bar{\lambda}_k<\cdots,$ and $\lim_{k\to\infty}\bar{\lambda}_k=\infty$.
Let $V_\beta$, $\beta\geq0$, denote the domain of the fractional power $(-A)^\beta$ equipped with the graph norm $\|x\|_{V_\beta}:=\|(-A)^\beta x\|,x\in V_\beta$.
The operator $S_rg(\cdot)$ is defined on a dense subspace of $V$ and for each $r\in[0,T]$ the operator $S_rg(\cdot)$ is  extendable to $V$ and $V\to L(V)$ is Lipschitz continuous. Recall that $\{e_i\}_{i=1}^\infty$ is an orthonormal basis of $V$.

We recall here some properties of the analytic semigroup. There exists a constant $C>0$, such that for any $0\leq\gamma\leq\zeta\leq1$, $\nu\in[0,1),\mu\in(0,1-\nu)$, $0\leq s\leq t\leq T$ and $\bar{\lambda}_1$, 
\begin{align}
	\|S_t\|_{L(V_\gamma,V_\varsigma)}\le& Ct^{-\varsigma+\gamma}e^{-\bar{\lambda}_1t},\label{eqn-3.1}\\
	\|S_{t-s}-\mathrm{id}\|_{L(V_{\upsilon+\mu},V_\upsilon)}\le& C(t-s)^\mu.\label{eqn-3.2}
\end{align}
We also note that, there exists a constant $C>0$, such that for any $\varrho,\nu\in(0,1]$, $0\leq\nu<\gamma+\varrho$ and $0\leq q\leq r\leq s\leq t$, 
\begin{align}
	\label{eqn-3.3}
	\|S_{t-r}-S_{t-q}\|_{L(V_{\nu},V_{\gamma})}\le& C(r-q)^{\varrho}(t-r)^{-\varrho-\gamma+\nu},\\
	\label{eqn-3.4}
	\|S_{t-r}-S_{s-r}-S_{t-q}+S_{s-q}\|_{L(V,V)}\le& C(t-s)^{\varrho}(r-q)^{\nu}(s-r)^{-(\varrho+\nu)}.
\end{align}

To ensure the existence and uniqueness of solution to the system \eqref{eqn-1.2}, we assume:

\begin{itemize}
	\item[\textbf{(A1)}] The coefficients $b(x,y):V\times V\to V$, $F(x,y):V\times V\to V$, $G(x,y):V\times V\to L_2(V)$ of  \eqref{eqn-1.2} are globally Lipschitz continuous in $x,y$, i.e., there exist two positive constants $C_1,C_2$, such that for any $x_1,x_2,y_1,y_2\in V$,  
	$$\begin{array}{rcl}\|b(x_1,y_1)-b(x_2,y_2)\|&\le&C_1(\|x_1-x_2\|+\|y_1-y_2\|),\\
		\|F(x_1,y_1)-F(x_2,y_2)\|&+&\|G(x_1,y_1)-G(x_2,y_2)\|_{HS}\\
		&\le& C_2(\|x_1-x_2\|+\|y_1-y_2\|).\end{array}$$
	
	\item[\textbf{(A2)}] The coefficients $b(x,y), F(x,y), G(x,y)$ of  \eqref{eqn-1.2} satisfy linear growth conditions, i.e., there exist two positive constants $C_3,C_4$, such that for any $x,y\in V$,
	$$\begin{array}{rcl}\|F(x,y)\|+\|G(x,y)\|_{HS}&\le& C_{3}(1+\|x\|+\|y\|),\\
		\|b(x,y)\|&\le& C_{4}(1+\|x\|+\|y\|).\end{array}$$
	
	\item[\textbf{(A3)}] The coefficients $g:V\to L(V)$ and $g':V\to L(V,L(V))$ are Lipschitz continuous, i.e., there exist two positive constants $L_g,M_g$, such that for any $x_1,x_2\in V$, 
	$$\begin{array}{rcl}\sup_{i\in\mathbb{N}}\|g(x_1)e_i-g(x_2)e_i\|&\le& L_g\|x_1-x_2\|,\\
		\sup_{i\in\mathbb{N}}\|g'(x_1)e_i-g'(x_2)e_i\|_{L(V)}&\le& M_{g}\|x_1-x_2\|.\end{array}$$
	
\end{itemize}

Under Assumptions \textbf{(A1)-(A3)} above, one can deduce from \cite[Lemma 4.2]{Pei2020}  that the slow-fast system \eqref{eqn-1.2} admits a unique mild pathwise solution $(X^{\varepsilon, \delta},Y^{\varepsilon, \delta})$. 

\begin{lemma}\label{lem3.1}
	Suppose that Assumptions \textnormal{\textbf{(A1)-(A3)}} hold. Then, for any initial values $X_0,Y_0\in V_\beta, \beta>\alpha$,  slow-fast system \eqref{eqn-1.2} has a unique mild pathwise solution $(X^{\varepsilon, \delta},Y^{\varepsilon, \delta})$, i.e.,
	$$\begin{cases}
		X_{t}^{\varepsilon,\delta}=S_{t}X_{0}^{\varepsilon,\delta}+\int_{0}^{t}S_{t-s}b(X_{s}^{\varepsilon,\delta},Y_{s}^{\varepsilon,\delta})\mathrm{d}s+\sqrt\varepsilon\int_{0}^{t}S_{t-s}g(X_{s}^{\varepsilon,\delta})\mathrm{d}B_{s}^{H},\\
		Y_{t}^{\varepsilon,\delta}=S_{\frac{t}{\delta}}Y_{0}^{\varepsilon,\delta}+\frac{1}{\delta}\int_{0}^{t}S_{\frac{t-s}{\delta}}F(X_{s}^{\varepsilon,\delta},Y_{s}^{\varepsilon,\delta})\mathrm{d}s+\frac{1}{\sqrt{\delta}}\int_{0}^{t}S_{\frac{t-s}{\delta}}G(X_{s}^{\varepsilon,\delta},Y_{s}^{\varepsilon,\delta})\mathrm{d}W_{s},\\ X_{0}^{\varepsilon,\delta}=X_{0}, ~Y_{0}^{\varepsilon,\delta}=Y_{0},~t\in[0,T].
	\end{cases}$$
	
\end{lemma}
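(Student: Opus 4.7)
The plan is to follow a pathwise Picard iteration that decouples the two equations at each step, exploiting the pathwise framework for the FBM-driven slow equation developed in Subsection 2.1 and standard stochastic integration for the BM-driven fast equation. I would set $Y^{(0)}_\cdot \equiv Y_0$ and define, for $n \geq 0$, $X^{(n+1)}$ as the mild solution of the slow equation with $Y^{(n)}$ substituted into its drift, and $Y^{(n+1)}$ as the mild solution of the fast equation with $X^{(n+1)}$ substituted into both its drift and diffusion.

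For the $n$-th slow subproblem, the pathwise integral $\int_0^t S_{t-s} g(X^{(n+1)}_s)\mathrm{d}B^H_s$ is well-defined by Remark \ref{re2.1}, provided the integrand lies coordinatewise in $W^{\alpha,1}$. Combining the pointwise bound \eqref{eqn-4} with the semigroup estimates \eqref{eqn-3.1}--\eqref{eqn-3.4} and Lemmas \ref{lem2.1}--\ref{lem2.2}, one obtains an a priori estimate for $\|X^{(n+1)}\|_{\alpha,\infty}$ in terms of $\Lambda_{\alpha,B^H}^{0,T}$, $\|Y^{(n)}\|_\infty$, and $\|X_0\|_{V_\beta}$; the hypothesis $\beta>\alpha$ is used precisely to absorb the initial singularity of $S_t x$ in the fractional Sobolev norm. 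Existence and uniqueness of $X^{(n+1)}$ then follow from the pathwise fixed-point argument of \cite[Lemma 4.2]{Pei2020} applied with the parameter-dependent drift $x\mapsto b(x,Y^{(n)}_\cdot)$. For the $n$-th fast subproblem, with $X^{(n+1)}$ frozen, assumptions \textbf{(A1)}--\textbf{(A2)} place one in a standard variational/mild SPDE setting, yielding a unique $Y^{(n+1)}\in L^2(\Omega;C([0,T],V))$.

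To close the iteration I would derive a contraction on a small interval $[0,T_*]$. Subtracting successive iterates and invoking the Lipschitz assumptions together with \eqref{eqn-4} for the FBM integral and It\^o's isometry for the BM integral, on the event $\{\Lambda_{\alpha,B^H}^{0,T}\leq R\}$ one gets
\[
\|X^{(n+1)}-X^{(n)}\|_{\alpha,\infty,[0,T_*]}^2 + \mathbb{E}\sup_{t\leq T_*}\|Y^{(n+1)}_t-Y^{(n)}_t\|^2 \leq C(R)\,T_*^{\kappa}\,\Delta_n,
\]
for some $\kappa>0$, where $\Delta_n$ denotes the analogous quantity at step $n$. Choosing $T_*$ small enough makes this a contraction; $R\uparrow\infty$ removes the localization (since $\Lambda_{\alpha,B^H}^{0,T}<\infty$ a.s.), and the resulting local solution is extended to $[0,T]$ by concatenation thanks to the linear-growth control from \textbf{(A2)} which precludes blow-up.

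The principal obstacle is the coupling through the fractional Sobolev norm. Unlike the purely It\^o case, the FBM integral must be controlled pathwise in the $\|\cdot\|_{\alpha,1}$ topology, and reproducing the Lipschitz dependence of $g(X^{(n+1)}_\cdot)-g(X^{(n)}_\cdot)$ in that norm requires estimating increments of a composition of H\"older-regular paths with $g$; this is exactly why \textbf{(A3)} imposes Lipschitz continuity not only on $g$ but also on its derivative $g'$. Interlacing these estimates with the semigroup bounds \eqref{eqn-3.3}--\eqref{eqn-3.4} so that a small-time factor $T_*^{\kappa}$ is left free to drive the contraction is the delicate technical point, and is exactly where the argument reduces to \cite[Lemma 4.2]{Pei2020}.
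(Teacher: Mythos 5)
The paper does not actually prove this lemma: it simply invokes \cite[Lemma 4.2]{Pei2020}, and your outline is essentially a reconstruction of the standard pathwise fixed-point argument behind that reference (pathwise bound \eqref{eqn-4} plus the semigroup estimates \eqref{eqn-3.1}--\eqref{eqn-3.4} for the $B^H$-integral, It\^o/BDG estimates for the $W$-integral, contraction on a small interval, localization in $\Lambda^{0,T}_{\alpha,B^H}$, then extension using the linear growth in \textbf{(A2)}). So the route is the right one, and your remark that \textbf{(A3)}'s Lipschitz condition on $g'$ is what makes the increment estimates of $g(X^{(n+1)})-g(X^{(n)})$ in the $\|\cdot\|_{\alpha,1}$-topology work (via the four-point estimate of Remark \ref{re3.1}) correctly identifies the delicate point.

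One step, however, does not typecheck as written and needs repair. Your contraction inequality adds the random pathwise quantity $\|X^{(n+1)}-X^{(n)}\|^2_{\alpha,\infty,[0,T_*]}$ to the deterministic quantity $\mathbb{E}\sup_{t\le T_*}\|Y^{(n+1)}_t-Y^{(n)}_t\|^2$ and bounds the sum by $C(R)T_*^{\kappa}\Delta_n$ ``on the event $\{\Lambda^{0,T}_{\alpha,B^H}\le R\}$''. Since the fast equation contains a genuine It\^o integral, its difference cannot be controlled $\omega$-wise, so the whole contraction must be run in an expectation norm, e.g.\ $\mathbb{E}\big[\mathbf{1}\,\|X^{(n+1)}-X^{(n)}\|^2_{\alpha,\infty}\big]+\mathbb{E}\sup_{t}\|Y^{(n+1)}_t-Y^{(n)}_t\|^2$; and the localization must then be adapted, because the event $\{\Lambda^{0,T}_{\alpha,B^H}\le R\}$ is $\mathcal{F}_T$-measurable and inserting its indicator inside BDG/It\^o isometry estimates for $\int_0^t S_{(t-s)/\delta}G\,\mathrm{d}W_s$ is not legitimate (the integrand depends on $B^H$ through $X$, so independence does not save you). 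The standard fix is to localize with the stopping time $\tau_R=\inf\{t:\Lambda^{0,t}_{\alpha,B^H}\ge R\}\wedge T$ (equivalently, the running H\"older-type norm of $B^H$ on $[0,t]$), run the Picard contraction for the stopped pair in the mixed norm above, and only afterwards let $R\to\infty$ using $\Lambda^{0,T}_{\alpha,B^H}<\infty$ a.s.; with that modification (and keeping the $\delta$-dependent constants, which are harmless for fixed $\varepsilon,\delta$), your alternating scheme closes and the argument indeed reduces to the one carried out in \cite{Pei2020}.
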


Consequently, there is a measurable map 
$\mathcal{G}^{\varepsilon, \delta}:C_0([0,T],V\times V)\to C([0,T],V)$
such that $X^{\varepsilon, \delta}=\mathcal{G}^{\varepsilon, \delta}(\sqrt\varepsilon B^H, \sqrt\varepsilon W)$.

In order to study an AP and the LDP for the slow-fast system \eqref{eqn-1.2}, we further impose:
\begin{itemize}
	\item[\textbf{(A4)}] There exist constants $\beta_1,C_5>0$ and $\beta_2, \beta_3\in\mathbb{R}$, such that for any $x_1,x_2,y_1,y_2\in V$, 
	$$\begin{array}{rcl}\left\langle y_1,F(x_1,y_1)\right\rangle&\le&-\beta_1\|y_1\|^2+\beta_2,\\
		\left\langle y_1-y_2,F(x_1,y_1)-F(x_2,y_2)\right\rangle&\le&\beta_3\|y_1-y_2\|^2+C_5\|x_1-x_2\|^2.\end{array}$$ 
	\item[\textbf{(A5)}] $\eta:=2\bar\lambda_1-2\beta_3-C_2>1$, $\kappa:=2\bar\lambda_1+2\beta_1-C_3>0$, where $\bar\lambda_1$ is the first eigenvalue of $-A$, $C_i,~i=2,3$ and $\beta_j,~j=1,2,3$ were given in Assumptions \textbf{(A1)}, \textbf{(A2)} and \textbf{(A4)}.	
	\item[\textbf{(A6)}] There exists a constant $C_6>0$, such that for any $x\in V$,
	$$\sup_{y\in V}(\|b(x,y)\|+\|G(x,y)\|_{HS})\leq C_6(1+\|x\|).$$
	
	\item[\textbf{(A7)}]  The scale parameter $\varepsilon$  satisfy $\lim_{\varepsilon\to0}\frac\delta\varepsilon=0.$
\end{itemize}

\begin{remark}	\label{re3.1}
	There exists a constant $C>0$, such that for any $x_1,x_2,y_1,y_2\in V$, 
	$$\begin{aligned}\sup_{i\in\mathbb{N}}\|g(x_1)e_i-g(x_2)e_i-g(y_1)e_i+g(y_2)e_i\|\le&C\|x_1-x_2-y_1+y_2\|\\&+C\|x_1-x_2\|(\|x_1-y_1\|+\|x_2-y_2\|)\end{aligned}$$
	holds (cf. \cite[Lemma 7.1]{Nualart2002}).
\end{remark}

\begin{remark}	\label{re3.2}
	The relationship between the rates at which the time-scale parameter 
	$\delta$ and the noise intensity $\varepsilon$ converge to zero can be characterized by the following three regimes,
	$$\lim_{\varepsilon\to0}\frac\delta\varepsilon=\left\{
	\begin{array}
		{cl}0,\quad\quad\quad~~ & \mathrm{Regime}~1; \\
		\gamma\in(0,\infty), & \mathrm{Regime}~2; \\
		\infty, \quad\quad\quad~& \mathrm{Regime}~3.
	\end{array}\right.
	$$
	In this work, we assume that $\delta$ decays faster than $\varepsilon$, corresponding to Regime 1.

	In the proof of the LDP, we introduce the controlled system to employ the variational representation. It is necessary to establish the weak convergence for the controlled slow component. 
	However, no result shows that there exists a unique invariant measure for the controlled equation. Therefore, we need to find a ``replaced" equation which owns a unique invariant measure in the weak convergence.
	Assumption \textbf{(A7)} ($\lim_{\varepsilon\to0}\frac\delta\varepsilon=0$) ensures that we can apply the invariant measure of the frozen equation corresponding to the fast equation to average the controlled system.

	In the other two regimes, due to the coupling with the control terms, the long-term behavior of the controlled fast equation depends non-trivially on the control term $v$, so the above method becomes invalid. As a result, the 
	$y$-marginal of the limiting occupation measure is no longer decoupled from the control term $v^\varepsilon$. Thus, proving the convergence of the controlled system becomes challenging, making it difficult to obtain the optimal control for the Laplace principle's lower bound.

	When $H=\frac12$, \cite{dupuis2012large, Spiliopoulos2013} prove the LDP for the three regimes with the help of viable pairs, relying on a local form of the rate function and tools from ergodic control of diffusion processes, which are accessible due to the Markovian nature of the dynamics.	However, in infinite-dimensional spaces and  for $H\neq\frac12$, the viable pair method is ineffective because it is difficult to obtain the tightness of the controlled systems by directly using the Arzela-Ascoli's theorem.

	Therefore, establishing the LDP in the other two regimes in infinite-dimensional spaces presents significant challenges and requires further investigation.
\end{remark}

The ergodicity for fast component have been investigated by Fu and Liu \cite[p. 74]{Fu2011}.
Then, the skeleton equation is defined as follows
\begin{equation}\label{eqn-4.1}
	\mathrm{d}\bar{X}^u_t=\left(A\bar{X}^u_t+\bar{b}(\bar{X}^u_t)\right)\mathrm{d}t+g(\bar{X}^u_t)\mathrm{d}u_t,\quad \bar{X}^u_0=X_0,\quad t\in[0,T].
\end{equation}
%where $(u,v)\in\mathcal{S}_N$, $\bar{b}(x)=\int_V b(x,y)\mu^x(dy)$ and $\mu^x$ is the unique invariant measure on $V$ of  the transition semigroups for the frozen equation \eqref{eqn-1.3}.

According to the definition of $\bar{b}$ and Assumptions \textbf{(A1)}-\textbf{(A5)}, it is easy to prove $\bar{b}$ also
satisfies the Lipschitz and linear growth conditions. Then, we have the following lemma by making slight extension (cf. \cite[Lemma 4.3]{Pei2020}). 

\begin{lemma}\label{lem3.2}
	Suppose that Assumptions \textnormal{\textbf{(A1)}-\textbf{(A5)}} hold. Then, for any initial value $X_0\in V_\beta$, $\beta>\alpha$ and $(u,v)\in\mathcal{S}_N$,  the skeleton equation \eqref{eqn-4.1} has a unique mild pathwise solution. Moreover, $\bar{b}$
	satisfies the Lipschitz and linear growth conditions.
\end{lemma}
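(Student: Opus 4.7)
The plan is to split the proof into two parts: first, verify the regularity of the averaged drift $\bar{b}$; second, obtain existence and uniqueness of the mild solution of \eqref{eqn-4.1} through a Banach fixed point argument on a suitable fractional Sobolev space, relying on the pathwise integral estimate of Lemma \ref{lem4.2}.

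For the properties of $\bar{b}$, I would first bound the second moment of the invariant measure $\mu^x$. Applying It\^o's formula to $\|Y_t^x\|^2$ for the frozen equation and using the dissipativity $\langle y, F(x,y)\rangle \le -\beta_1\|y\|^2 + \beta_2$ together with the linear growth of $G$ from \textbf{(A2)} yields, under the condition $\kappa>0$ in \textbf{(A5)}, the estimate $\int_V \|z\|^2 \mu^x(\mathrm{d}z) \le C(1+\|x\|^2)$. Combined with the linear growth of $b$ from \textbf{(A2)}, this gives $\|\bar{b}(x)\|\le C(1+\|x\|)$. For Lipschitz continuity, I would use the synchronous coupling of two frozen equations $Y^{x_1}$ and $Y^{x_2}$ driven by the same $W$: the second inequality of \textbf{(A4)} combined with $\eta>1$ in \textbf{(A5)} gives exponential contraction $\mathbb{E}\|Y_t^{x_1}-Y_t^{x_2}\|^2 \le C\|x_1-x_2\|^2$ uniformly in $t$, which together with the Lipschitz bound on $b$ transfers to $\|\bar{b}(x_1)-\bar{b}(x_2)\|\le C\|x_1-x_2\|$ after passing to the invariant measures.

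For existence and uniqueness of the skeleton equation, I would work in the space $W^{\alpha,\infty}([0,T],V)$ and set up the contraction map
\begin{equation*}
\mathcal{T}(\bar{X})_t := S_t X_0 + \int_0^t S_{t-s}\bar{b}(\bar{X}_s)\mathrm{d}s + \int_0^t S_{t-s} g(\bar{X}_s)\mathrm{d}u_s,
\end{equation*}
where the last integral is the generalized Riemann--Stieltjes integral constructed in Lemma \ref{lem4.2}. The deterministic convolution term is controlled in $\|\cdot\|_{\alpha,\infty}$ using \eqref{eqn-3.1}--\eqref{eqn-3.2} together with the just-established linear growth and Lipschitz property of $\bar{b}$. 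For the pathwise stochastic term, Lemma \ref{lem4.1} gives $\|u\|_{1-\alpha,0,T} \le C\|u\|_{H-hld} \le C\|u\|_{\mathcal{H}^H}\le C\sqrt{2N}$, so $\Lambda_{\alpha,u}^{0,T}$ is uniformly bounded on $\mathcal{S}_N$. Then estimate \eqref{eqn-2.11} applied to the integrand $s\mapsto S_{t-s}g(\bar{X}_s)$, combined with \textbf{(A3)} and the semigroup bounds \eqref{eqn-3.3}--\eqref{eqn-3.4}, yields the required $W^{\alpha,\infty}$-control. A small-time contraction plus concatenation argument extends the fixed point to $[0,T]$; uniqueness follows by Gr\"onwall applied to the difference of two solutions.

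The most delicate step will be estimating the Weyl derivative $D_{0+}^\alpha\!\left[S_{t-\cdot}g(\bar{X})e_i\right]$ in the definition of $\|\cdot\|_{\alpha,1}$, since the increment $S_{t-s}g(\bar{X}_s)e_i - S_{t-r}g(\bar{X}_r)e_i$ must be split as $(S_{t-s}-S_{t-r})g(\bar{X}_r)e_i + S_{t-s}(g(\bar{X}_s)-g(\bar{X}_r))e_i$, with each summand handled through the matching pair among \eqref{eqn-3.1}--\eqref{eqn-3.4}, Assumption \textbf{(A3)}, and Lemma \ref{lem2.1} to absorb the resulting singularities in the $\lambda$-integral. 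This is precisely the computation performed in \cite[Lemma 4.3]{Pei2020}; the only modification is to substitute the pathwise FBM bound by Lemma \ref{lem4.2} and to note that the bound is uniform over $(u,v)\in\mathcal{S}_N$, which will be essential in the subsequent LDP arguments.
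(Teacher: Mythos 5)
Your proposal is correct and follows essentially the same route as the paper, which establishes the Lipschitz and linear growth of $\bar{b}$ from the definition together with Assumptions \textbf{(A1)}--\textbf{(A5)} (via the ergodicity/contraction estimates for the frozen equation) and obtains well-posedness of \eqref{eqn-4.1} as a slight extension of \cite[Lemma 4.3]{Pei2020}, replacing the pathwise FBM bound by the bound $\Lambda_{\alpha,u}^{0,T}\leq C\|u\|_{\mathcal{H}^H}$ from Lemmas \ref{lem4.1}--\ref{lem4.2}. You merely spell out the details that the paper leaves to the cited references.
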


We define a map
$\mathcal{G}^{0}:\mathcal{H}\to C([0,T],V)$
by $\bar{X}^u=\mathcal{G}^{0}(u,v)$. Note the $\mathcal{G}^{0}(u,v)$ is independent of $v$.

The main result of the LDP is formulated as follows.

\begin{theorem}\label{th3.1}
	Suppose that Assumptions \textnormal{\textbf{(A1)}-\textbf{(A7)}} hold. Let $\varepsilon\to 0$. Then the slow component $X^{\varepsilon, \delta}$ of the slow-fast system \eqref{eqn-1.2} satisfies the LDP in $C([0,T],V)$ with the good rate function $I$ given by
	$$ I(g):=\inf_{\{(u,v)\in\mathcal{H}:~g=\mathcal{G}^0(u,v)\}}{\frac12\|(u,v)\|^2_{\mathcal{H}}}
	=\inf_{\{u\in\mathcal{H}^H:~\phi=\mathcal{G}^0(u,0)\}}{\frac12\|u\|^2_{\mathcal{H}^H}},$$
	where infimum over an empty set is taken as $+\infty$.
\end{theorem}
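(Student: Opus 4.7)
The plan is to establish the Laplace principle equivalent to the LDP by verifying the two standard sufficient conditions (as formulated in Appendix \ref{A}), now adapted to the variational representation for mixed FBM given in Lemma \ref{lem2.3}. Concretely, with $X^{\varepsilon,\delta}=\mathcal{G}^{\varepsilon,\delta}(\sqrt{\varepsilon}B^H,\sqrt{\varepsilon}W)$ and the skeleton map $\mathcal{G}^{0}$, it suffices to show \textbf{(C1)} the compactness/continuity: whenever $(u^n,v^n)\in\mathcal{S}_N$ converges weakly to $(u,v)\in\mathcal{S}_N$ one has $\mathcal{G}^0(u^n,v^n)\to\mathcal{G}^0(u,v)$ in $C([0,T],V)$; and \textbf{(C2)} the weak convergence of the controlled processes: whenever $(u^\varepsilon,v^\varepsilon)\in\mathcal{A}_b^N$ converges in distribution to $(u,v)$ as $\mathcal{S}_N$-valued random elements, the controlled slow process $\mathcal{G}^{\varepsilon,\delta}(\sqrt{\varepsilon}B^H+u^\varepsilon,\sqrt{\varepsilon}W+v^\varepsilon)$ converges in distribution to $\bar{X}^u=\mathcal{G}^0(u,v)$ in $C([0,T],V)$. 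Once both are verified, the good rate function $I$ of the statement follows from the general weak convergence criterion, and independence of $\mathcal{G}^0$ from $v$ reduces the infimum to that over $u\in\mathcal{H}^H$.

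For \textbf{(C1)}, I would fix a weakly convergent sequence $u^n\rightharpoonup u$ in $\mathcal{S}_N\cap\mathcal{H}^H$ and write the mild form $\bar{X}^{u^n}_t=S_tX_0+\int_0^tS_{t-s}\bar{b}(\bar{X}^{u^n}_s)\mathrm{d}s+\int_0^tS_{t-s}g(\bar{X}^{u^n}_s)\mathrm{d}u^n_s$. Uniform bounds on $\|\bar{X}^{u^n}\|_{\alpha,\infty}$ follow from the pathwise integral estimate \eqref{eqn-2.11} applied to $\int g(\bar X)\,\mathrm{d}u^n$ together with Lemma \ref{lem4.1}, which gives $\|u^n\|_{1-\alpha,0,T}\leq C\|u^n\|_{\mathcal{H}^H}\leq C\sqrt{2N}$; Lemma \ref{lem2.2} handles the semigroup factor. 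Weak convergence in $\mathcal{H}^H$ combined with the compact embedding $\mathcal{H}^H\hookrightarrow C^{H'}([0,T],V_1)$ for $H'<H$ yields strong convergence of $u^n$ in the $\|\cdot\|_{1-\alpha,0,T}$ norm, which, together with the Lipschitz continuity of $\bar{b}$ and $g$, closes a Gr\"{o}nwall argument to give $\bar{X}^{u^n}\to\bar{X}^u$ uniformly.

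For \textbf{(C2)}, which I expect to be the main obstacle, I would proceed in three stages. First, uniform moment bounds for the controlled pair $(X^{\varepsilon,\delta,u^\varepsilon,v^\varepsilon},Y^{\varepsilon,\delta,u^\varepsilon,v^\varepsilon})$ in the norms $\|\cdot\|_{\alpha,\infty}$ and $\sup_t\|\cdot\|$ respectively, using Assumptions \textbf{(A4)}--\textbf{(A6)} (the dissipativity of $A+F(x,\cdot)$ and the boundedness in $y$ of $b$ and $G$ through \textbf{(A6)}) and the pathwise integral estimate \eqref{eqn-2.11}. Second, carry out Khasminskii's averaging scheme by introducing an auxiliary process $\hat{Y}^{\varepsilon,\delta}$ built by freezing the slow component on intervals of a small mesh $\Delta(\varepsilon)$, and then controlling $\mathbb{E}\int_0^T\|b(X^{\varepsilon,\delta,\cdot},Y^{\varepsilon,\delta,\cdot}_s)-\bar{b}(X^{\varepsilon,\delta,\cdot}_s)\|\mathrm{d}s$ by combining the time-discretization error with the ergodic estimate for the frozen equation; Assumption \textbf{(A7)} ($\delta/\varepsilon\to 0$) makes this error vanish by allowing $\Delta(\varepsilon)/\delta\to\infty$ while $\sqrt{\varepsilon}h$-type scales collapse. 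Third, with the averaged drift in place, compare the controlled slow process to the skeleton $\bar{X}^{u^\varepsilon}$: the residual stochastic term $\sqrt{\varepsilon}\int S_{t-s}g(X^{\varepsilon,\delta,\cdot})\mathrm{d}B^H_s$ tends to zero in probability through \eqref{eqn-4}, and the difference $\int S_{t-s}g(X^{\varepsilon,\delta,\cdot})\mathrm{d}u^\varepsilon_s-\int S_{t-s}g(\bar{X}^{u^\varepsilon}_s)\mathrm{d}u^\varepsilon_s$ is closed by a Gr\"{o}nwall argument analogous to \textbf{(C1)}. Skorokhod's representation then lets us replace convergence in distribution of $(u^\varepsilon,v^\varepsilon)\to(u,v)$ by a.s.\ convergence in the weak topology of $\mathcal{S}_N$, and \textbf{(C1)} finishes the passage $\bar{X}^{u^\varepsilon}\to\bar{X}^u$.

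The genuinely delicate point is the third stage: because $B^H$ is not a martingale and the ambient space is infinite-dimensional, classical BDG inequalities and the Arzel\`{a}--Ascoli theorem are unavailable for proving tightness of the controlled family and for closing the Gr\"{o}nwall loop. I would circumvent this by working systematically with the $\mathcal{B}^{\alpha,2}$ and $W^{\alpha,\infty}$ norms and invoking only \eqref{eqn-4}, Lemmas \ref{lem2.1}--\ref{lem2.2} and \ref{lem4.1}--\ref{lem4.2}; the compact embeddings $V_\gamma\hookrightarrow V$ for $\gamma\in(0,\beta]$ combined with the semigroup regularisation \eqref{eqn-3.1}--\eqref{eqn-3.4} substitute for Arzel\`{a}--Ascoli when compactness of the laws on $C([0,T],V)$ is needed. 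The relaxed coefficient assumption in \textbf{(A6)} (only $y$-boundedness of $b$ and $G$, not full boundedness) is the reason the averaging error must be processed through generalised Riemann--Stieltjes estimates rather than moment bounds on $Y$ alone, and handling this interplay carefully is what turns the standard Budhiraja--Dupuis recipe into the technical core of the proof.
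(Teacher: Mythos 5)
Your overall strategy coincides with the paper's: verify the two conditions of the weak-convergence criterion for the variational representation of mixed FBM, with uniform $\|\cdot\|_{\alpha,\infty}$ bounds, a Khasminskii time discretization of the controlled fast process under \textbf{(A7)}, localization of the pathwise FBM integral, and a Gr\"{o}nwall comparison with the skeleton $\bar{X}^{u^\varepsilon}$. However, there is a genuine gap in your proof of \textbf{(C1)} (and it propagates to the last stage of \textbf{(C2)}): you claim that weak convergence $u^n\rightharpoonup u$ in $\mathcal{H}^H$ together with a ``compact embedding $\mathcal{H}^H\hookrightarrow C^{H'}([0,T],V_1)$'' yields strong convergence of $u^n$ in the $\|\cdot\|_{1-\alpha,0,T}$ norm. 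In the infinite-dimensional setting this embedding is continuous but \emph{not} compact: taking $\dot{h}_n(s)=\phi(s)e_n$ with $\{e_n\}$ orthonormal in $V_1$ and a fixed scalar $\phi\in L^2$, the functions $h_n=\mathbb{K}_H\dot{h}_n$ are bounded in $\mathcal{H}^H$ and converge weakly to $0$, yet $\|h_n(t)-h_m(t)\|_1$ does not vanish, so no subsequence converges in any H\"{o}lder (or $\|\cdot\|_{1-\alpha,0,T}$) norm. This is exactly the failure of Arzel\`{a}--Ascoli-type compactness that the paper emphasizes as the main infinite-dimensional obstruction, so your route to closing the Gr\"{o}nwall loop in \textbf{(C1)} does not go through.

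The paper circumvents this without ever upgrading the convergence of the controls: in the difference $\bar{X}^{u^n}-\bar{X}^{u}$ it isolates the term $P=\big\|\int_0^t r^{\frac12-H}Q_1^{-\frac12}(\dot{u}^n_r-\dot{u}_r)\,f^t(r)\,\mathrm{d}r\big\|$ with the fixed kernel $f^t(r)=\mathbf{1}_{[0,t]}(r)\int_r^t S_{t-s}g(\bar{X}^u_s)Q_1^{\frac12}s^{H-\frac12}(s-r)^{H-\frac32}\mathrm{d}s$, shows $r\mapsto r^{\frac12-H}f^t(r)\in L^2([0,T],V)$, and then uses the \emph{weak} convergence $\dot{u}^n\rightharpoonup\dot{u}$ in $L^2([0,T],V_1)$ directly to get $P\to 0$, closing with Gr\"{o}nwall and the Lipschitz bounds on $\bar{b}$ and $g$. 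You would need to replace your compactness claim by this (or an equivalent) pairing argument; the same substitution is needed in your final passage $\bar{X}^{u^\varepsilon}\to\bar{X}^u$, which the paper handles by combining this continuity of $\mathcal{G}^0$ with the $L^2$ estimate $\mathbb{E}\big[\|\tilde{X}^{\varepsilon,\delta}-\bar{X}^{u^\varepsilon}\|^2_{\alpha,T}\big]\to 0$ and Portmanteau, rather than by tightness of laws on $C([0,T],V)$.
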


\begin{remark}\label{round2-1}
Under appropriate conditions, such as the diffusion matrix $g$ being symmetric with a bounded inverse (and assuming $gg^T$ is uniformly non-degenerate when $H=\frac12$), the explicit representation of the rate function can be derived.
From the skeleton equation \eqref{eqn-4.1}, we have
\begin{align*}
	I(\phi)
	=\inf_{\{u\in\mathcal{H}^H:~\phi=\mathcal{G}^0(u,0)\}}{\frac12\|u\|^2_{\mathcal{H}^H}}
	=\inf_{\{u\in \mathcal{H}^H:~\phi_0=X_0,~\phi'=A\phi+\bar{b}(\phi)+g(\phi)u'
		\}} \frac12\|u\|^2_{\mathcal{H}^H}
	%	=\inf_{\{u'\in  g(\phi)^{-1}(\phi'-A\phi-\bar{b}(\phi))  \}}{\frac12\|u\|^2_{\mathcal{H}^H}}
	,
\end{align*} 
where, 
$u'$
denote the time derivative of the control 
$u$ corresponding to the noise 
$B^H$, with the explicit expression given by 
\begin{align*}
	u'_t
	=\frac{\mathrm{d}}{\mathrm{d}t}\mathbb{K}_H(\mathbb{K}_H^{-1}u)=\mathbb{K}'_H(\mathbb{K}_H^{-1}u)
	=\frac{c_Ht^{H-\frac12}}{\Gamma\left(H-\frac12\right)}\int_0^t(t-s)^{H-\frac32}s^{\frac12-H}\dot{u}_s\mathrm{d}s,
\end{align*}
where 
$$
\dot{u}_t=\mathbb{K}_H^{-1}u_t
=
\big(c_H\Gamma(\frac32-H)\big)^{-1}\Big(t^{\frac12-H}u'_t+(H-\frac12)t^{H-\frac12}\int_0^t\frac{t^{\frac12-H}u'_t-s^{\frac12-H}u'_s}{(t-s)^{H+\frac12}}\mathrm{d}s\Big),
$$   
and for any $\psi\in L^2([0,T],V_1)$
$$
\mathbb{K}'_H\psi_t:=\frac{c_H t^{H-\frac12}}{\Gamma(H-\frac12)}\int_0^t (t-s)^{H-\frac32}s^{\frac12-H}\psi_s\mathrm{d}s.
$$
Further details can be found in Chapter 5.1.3 of \cite{nualart2006malliavin} and Sections 8.1.2 and 8.2.3 of \cite{stroock2010probability}.
Given the equation for	$u$, $\phi'=A\phi+\bar{b}(\phi)+g(\phi)u'$, the minimal-norm solution for $u$ is given by  
$u^*=\int_0^\cdot g(\phi_s)^{-1}\big(\phi'_s-A\phi_s-\bar{b}(\phi_s)\big)\mathrm{d}s$. Therefore, it follows that
\begin{align*}
	I(\phi)=&\frac12\|u^*\|^2_{\mathcal{H}^H}=\frac12\int_0^T \|\mathbb{K}_H^{-1}u^*_t\|^2_1\mathrm{d}t\\
	=&\frac{1}{2c_H^2\Gamma(\frac32-H)^2}\int_0^T
	\Big\|t^{\frac12-H}g(\phi_t)^{-1}\big(\phi'_t-A\phi_t-\bar{b}(\phi_t)\big)\\
	&+(H-\frac12)t^{H-\frac12}\int_0^t\frac{t^{\frac12-H}g(\phi_t)^{-1}\big(\phi'_t-A\phi_t-\bar{b}(\phi_t)\big)}{(t-s)^{H+\frac12}}\mathrm{d}s
	\\
	&-(H-\frac12)t^{H-\frac12}\int_0^t\frac{s^{\frac12-H}g(\phi_s)^{-1}\big(\phi'_s-A\phi_s-\bar{b}(\phi_s)\big)}{(t-s)^{H+\frac12}}\mathrm{d}s\Big\|_1^2\mathrm{d}t	
	\end{align*}
for all $\phi\in\mathcal{C}([0,T],V)$ such that $\phi_0=X_0$ and $g(\phi)^{-1}\big(\phi'-A\phi-\bar{b}(\phi)\big)\in\mathbb{K}'_H(L^2([0,T],V_1))$, and $I(\phi)=\infty$ otherwise.
The detailed can be found in \cite{gailus2025large}.
\end{remark}

\begin{remark}\label{re4.1}
	In this work, we weaken the global boundedness condition $\sup_{x,y\in V}$ $\|b(x,y)+G(x,y)\|<\infty$ to $\sup_{y\in V}\|b(x,y)\|\leq C(1+\|x\|)$. This modification differs from the conditions drawn in \cite{Pei2020}.
	
	%	We omit the detailed proof here to keep
	%	down the length of this paper.
\end{remark}

\section{Some Key Lemmas}\label{4}

In this section,  We assume $0<\delta\ll\varepsilon<1$ are basically fixed. To prove Theorem \ref{th3.1}, we derive some
crucial estimates. Firstly, for any fixed $N\in\mathbb{N}$, let $(u^\varepsilon,v^\varepsilon)\in\mathcal{A}_b^N$. We consider the following controlled system associated to  $\eqref{eqn-1.2}$,
\begin{equation}\label{eqn-4.9}
	\begin{cases}
		\mathrm{d}\tilde{X}^{\varepsilon,\delta}_t=(A\tilde{X}^{\varepsilon,\delta}_t+b(\tilde{X}^{\varepsilon,\delta}_t,\tilde{Y}^{\varepsilon,\delta}_t))\mathrm{d}t+g(\tilde{X}^{\varepsilon,\delta}_t)\mathrm{d}u^\varepsilon_t+\sqrt\varepsilon g(\tilde{X}^{\varepsilon,\delta}_t)\mathrm{d}B^H_t,\\
		\begin{aligned}
			\mathrm{d}\tilde{Y}^{\varepsilon,\delta}_t=&\frac{1}{\delta}(A\tilde{Y}^{\varepsilon,\delta}_t+F(\tilde{X}^{\varepsilon,\delta}_t,\tilde{Y}^{\varepsilon,\delta}_t))\mathrm{d}t+\frac{1}{\sqrt{\delta\varepsilon}} G(\tilde{X}^{\varepsilon,\delta}_t,\tilde{Y}^{\varepsilon,\delta}_t)\mathrm{d}v^\varepsilon_t
			+\frac{1}{\sqrt\delta} G(\tilde{X}^{\varepsilon,\delta}_t,\tilde{Y}^{\varepsilon,\delta}_t)\mathrm{d}W_t,
		\end{aligned}  
		\\
		\tilde{X}^{\varepsilon,\delta}_0=X_0,~\tilde{Y}^{\varepsilon,\delta}_0=Y_0,~t\in[0,T].
	\end{cases}
\end{equation}	
By Girsanov's theorem, it deduces that there exists a unique mild solution $(\tilde{X}^{\varepsilon,\delta},\tilde{Y}^{\varepsilon,\delta})$ of the controlled system \eqref{eqn-4.9} and 
$\tilde{X}^{\varepsilon,\delta}=\mathcal{G}^{\varepsilon,\delta}(\sqrt\varepsilon B^H +u^\varepsilon, \sqrt\varepsilon W+v^\varepsilon).$

\begin{lemma}\label{lem4.3}
	Suppose that Assumptions \textnormal{\textbf{(A1)}-\textbf{(A3)}} and \textnormal{\textbf{(A6)}} hold. Let $N\in\mathbb{N}$. Then,  there exists a constant $C>0$, such that for any $p\geq1$ and $(u^\varepsilon,v^\varepsilon)\in\mathcal{A}_b^N$, we have
	$$\mathbb{E}\big[\|\tilde{X}^{\varepsilon,\delta}\|_{\alpha,\infty}^p\big]\leq C.$$ 
	Here, $C$ is a constant which only depends on $p$, $T$ and $N$.
\end{lemma}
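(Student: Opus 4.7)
The plan is to derive a Volterra-type integral inequality for the map $t\mapsto\mathbb{E}\bigl[\|\tilde{X}^{\varepsilon,\delta}\|^p_{\alpha,[0,t]}\bigr]$ from the mild formulation and then close it using a singular Gronwall lemma. Writing
\begin{equation*}
\tilde{X}^{\varepsilon,\delta}_t = S_t X_0 + J_1(t) + J_2(t) + J_3(t),
\end{equation*}
where $J_1$ is the drift integral, $J_2=\int_0^\cdot S_{\cdot-s}g(\tilde{X}^{\varepsilon,\delta}_s)\mathrm{d}u^\varepsilon_s$, and $J_3=\sqrt{\varepsilon}\int_0^\cdot S_{\cdot-s}g(\tilde{X}^{\varepsilon,\delta}_s)\mathrm{d}B^H_s$, I will bound the $\|\cdot\|_{\alpha,[0,t]}$ norm of each piece separately. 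The homogeneous part $S_\cdot X_0$ is handled by the semigroup bounds \eqref{eqn-3.1}--\eqref{eqn-3.2} and the assumption $X_0\in V_\beta$ with $\beta>\alpha$.

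For $J_1$, Assumption \textbf{(A6)} is essential because it removes the dependence on $\tilde{Y}^{\varepsilon,\delta}$ via $\|b(\tilde{X}^{\varepsilon,\delta}_s,\tilde{Y}^{\varepsilon,\delta}_s)\|\le C_6(1+\|\tilde{X}^{\varepsilon,\delta}_s\|)$. To bound the fractional increment part $\int_0^t(t-r)^{-\alpha-1}\|J_1(t)-J_1(r)\|\mathrm{d}r$, I will split $J_1(t)-J_1(r)$ into the ``new mass'' on $[r,t]$ and the ``old semigroup difference'' on $[0,r]$, applying \eqref{eqn-3.1}, \eqref{eqn-3.3} and Lemma \ref{lem2.1} to get a bound of the form $C\int_0^t(t-r)^{-\gamma}(1+\|\tilde{X}^{\varepsilon,\delta}\|_{\alpha,[0,r]})\mathrm{d}r$ for some $\gamma<1$. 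For $J_2$ and $J_3$, I will apply Lemma \ref{lem4.2} and Remark \ref{re2.1} respectively on each subinterval, together with the same decomposition of the increment into ``new'' and ``old semigroup'' parts; the $W^{\alpha,1}$-norm of the integrand $r\mapsto S_{t-r}g(\tilde{X}^{\varepsilon,\delta}_r)e_i$ is estimated through Assumption \textbf{(A3)} (Lipschitz and growth of $g$) combined with \eqref{eqn-3.2}--\eqref{eqn-3.4}, yielding $\sup_{i}\|S_{t-\cdot}g(\tilde{X}^{\varepsilon,\delta}_\cdot)e_i\|_{\alpha,1,[0,t]}\le C(1+\|\tilde{X}^{\varepsilon,\delta}\|_{\alpha,[0,t]})$.

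To control the ``prefactors'' $\Lambda^{0,T}_{\alpha,u^\varepsilon}$ and $\Lambda^{0,T}_{\alpha,B^H}$, I will use that $(u^\varepsilon,v^\varepsilon)\in\mathcal{A}_b^N$, so $\|u^\varepsilon\|^2_{\mathcal{H}^H}\le 2N$. By Lemma \ref{lem4.1}, $Q_1^{-1/2}u^\varepsilon\in C^H([0,T],V)$ with norm controlled by $\sqrt{N}$, and since $1-\alpha<H$ this translates into a deterministic bound $\Lambda^{0,T}_{\alpha,u^\varepsilon}\le C(T,N)$. The random factor $\Lambda^{0,T}_{\alpha,B^H}$ has finite moments of every order by the Gaussian nature of $\beta^{H,i}$. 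Hölder's inequality then lets me pull these prefactors out at the price of a deterministic constant. Combining everything and raising to the $p$-th power yields
\begin{equation*}
\mathbb{E}\bigl[\|\tilde{X}^{\varepsilon,\delta}\|^p_{\alpha,[0,t]}\bigr]\le C + C\int_0^t(t-r)^{-\gamma}\,\mathbb{E}\bigl[\|\tilde{X}^{\varepsilon,\delta}\|^p_{\alpha,[0,r]}\bigr]\mathrm{d}r,
\end{equation*}
with $\gamma<1$, and the singular Gronwall inequality delivers the desired uniform estimate.

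The main obstacle is the third and fourth steps above: the norm $\|\cdot\|_{\alpha,[0,t]}$ contains a non-local fractional integral of increments, and because $J_2$, $J_3$ are pathwise (generalized Riemann--Stieltjes) integrals, the sup cannot be moved inside as it would be for an It\^o stochastic integral via BDG. The correct treatment requires a delicate decomposition of $J_i(t)-J_i(r)$ that makes the smoothing effect of the analytic semigroup (estimates \eqref{eqn-3.3}--\eqref{eqn-3.4}) compensate for the singular kernel $(t-r)^{-\alpha-1}$, so that the resulting bound is of genuinely Volterra type in the norm $\|\tilde{X}^{\varepsilon,\delta}\|_{\alpha,[0,\cdot]}$ rather than merely in $\sup_s\|\tilde{X}^{\varepsilon,\delta}_s\|$.
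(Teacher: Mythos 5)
Your decomposition into drift, control and FBM integrals, and the deterministic bound on $\Lambda^{0,T}_{\alpha,u^\varepsilon}$ through $\|u^\varepsilon\|_{\mathcal{H}^H}\le\sqrt{2N}$, are consistent with the paper; the gap is in how you dispose of the fractional-Brownian prefactor. Pathwise, your estimates lead to an inequality of the form $\|\tilde X^{\varepsilon,\delta}\|_{\alpha,[0,t]}\le C(1+\Lambda)+C(1+\Lambda)\int_0^t(t-r)^{-\gamma}\|\tilde X^{\varepsilon,\delta}\|_{\alpha,[0,r]}\,\mathrm{d}r$ with the \emph{random} constant $\Lambda=\Lambda^{0,T}_{\alpha,B^H}$ multiplying the unknown inside the Volterra integral. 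Taking expectations first, as you propose, does not yield a closed inequality for $\mathbb{E}[\|\tilde X^{\varepsilon,\delta}\|^p_{\alpha,[0,t]}]$: $\Lambda$ is not independent of $\|\tilde X^{\varepsilon,\delta}\|_{\alpha,[0,r]}$, and H\"older turns $\mathbb{E}\bigl[\Lambda^p\|\tilde X^{\varepsilon,\delta}\|^p_{\alpha,[0,r]}\bigr]$ into $\mathbb{E}[\Lambda^{pq}]^{1/q}\,\mathbb{E}\bigl[\|\tilde X^{\varepsilon,\delta}\|^{pq'}_{\alpha,[0,r]}\bigr]^{1/q'}$, a strictly higher moment of the unknown, so the singular Gronwall loop never closes. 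Even if you first run Gronwall pathwise, the resulting bound is of Mittag--Leffler type, roughly $\exp\bigl(C\Lambda^{1/(1-\gamma)}\bigr)$, so the mere fact that $\Lambda$ has finite polynomial moments (which is all you invoke) is not sufficient; you need exponential integrability of $\Lambda^{1/(1-\alpha)}$, which holds precisely because $1/(1-\alpha)<2$ and $\Lambda$ is a norm-like functional of a Gaussian process, i.e. by Fernique's theorem. Your proposal never uses this, and it is the decisive ingredient.

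The paper circumvents Gronwall-in-expectation entirely: it works pathwise with the exponentially weighted norms $\|\cdot\|_{\rho,T}$ and $\|\cdot\|_{1,\rho,T}$, derives two coupled inequalities whose coefficients carry the small factors $\rho^{\alpha-1}$, $\rho^{2\alpha-1}$, $\rho^{-1}$, and then chooses the random weight $\rho=\bigl(4C(\Lambda+\|u^\varepsilon\|_{\mathcal{H}^H})\bigr)^{1/(1-\alpha)}$ to absorb the self-referential terms, obtaining the pathwise bound $\|\tilde X^{\varepsilon,\delta}\|_{\alpha,\infty}\le C\exp\bigl(C(\Lambda+\|u^\varepsilon\|_{\mathcal{H}^H})^{1/(1-\alpha)}\bigr)(\Lambda+\|u^\varepsilon\|_{\mathcal{H}^H})^{1/(1-\alpha)}$; only then are expectations taken, with Fernique's theorem and $1/(1-\alpha)<2$ delivering all moments. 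To repair your argument you would have to do the same in your Volterra setting: close the inequality pathwise for each $\omega$, and then justify $\mathbb{E}\bigl[\exp\bigl(C\Lambda^{1/(1-\alpha)}\bigr)\bigr]<\infty$ via Fernique. As written, the step ``pull out the prefactors with H\"older and apply singular Gronwall to the expectation'' is a genuine gap.
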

\begin{proof}
	In this proof, $C$ is a positive constant depending on only $p$, $T$ and $N$, which may
	change from line to line. Firstly, we denote $\Lambda:=\Lambda_{\alpha,B^H}^{0,T}\vee1$. For any $\rho\geq1$, let
	$$\|f\|_{\rho,T}:=\sup_{t\in[0,T]}e^{-\rho t}\|f(t)\|,$$
	$$\|f\|_{1,\rho,T}:=\sup_{t\in[0,T]}e^{-\rho t}\int_0^t\frac{\|f(t)-f(r)\|}{(t-r)^{\alpha+1}}\mathrm{d}r.$$

	By  $\eqref{eqn-4.9}$, for any $\beta>\alpha$, we have
	\begin{align}	
		\|\tilde{X}^{\varepsilon,\delta}\|_{\rho,T}
		\leq&C\|X_0\|_{V_\beta}+\sup_{t\in[0,T]}e^{-\rho t}\Big\|\int_0^tS_{t-s}b(\tilde{X}^{\varepsilon,\delta}_s,\tilde{Y}^{\varepsilon,\delta}_s)\mathrm{d}s\Big\|\notag \\ &+\sup_{t\in[0,T]}e^{-\rho t}\sqrt\varepsilon \Big\|\int_0^tS_{t-s}g(\tilde{X}^{\varepsilon,\delta}_s)\mathrm{d}B^H_s\Big\|\notag \\
		&+\sup_{t\in[0,T]}e^{-\rho t}\Big\|\int_0^tS_{t-s}g(\tilde{X}^{\varepsilon,\delta}_s)\mathrm{d}u_s^\varepsilon\Big\|\notag\\
		=:&C\|X_0\|_{V_\beta}+I_1+I_2 + I_3. \label{eqn-3.11}
	\end{align}
	For the term $I_1$, by  $\eqref{eqn-3.1}$, Lemma \ref{lem2.2} and Assumptions \textbf{(A6)}, we have
	\begin{align}
		I_1
		\leq C(1+\|\tilde{X}^{\varepsilon,\delta}\|_{\rho,T})\int_0^T e^{-\rho (t-s)}s^{-\alpha}\mathrm{d}s \leq C(1+\rho^{\alpha-1}\|\tilde{X}^{\varepsilon,\delta}\|_{\rho,T}).\label{eqn-3.12}
	\end{align}
	Before computing $I_2$, for any $\gamma\in(\frac12,1-\alpha)$, we have
	\begin{align}
		\Big\|\int_0^tS_{t-s}g(\tilde{X}^{\varepsilon,\delta}_s)\mathrm{d}B^H_s\Big\|
		%	\leq&\Lambda\sup_{i\in\mathbb{N}}\big\|S_{t-\cdot}g(\tilde{X}^{\varepsilon,\delta}_\cdot)e_i\big\|_{\alpha,1}\notag\\
%		\leq& C \Lambda\int_0^t\bigg(\frac{1+\|\tilde{X}^{\varepsilon,\delta}_s\|}{s^\alpha}+\int_0^s\Big(\frac{\|\tilde{X}^{\varepsilon,\delta}_s-\tilde{X}^{\varepsilon,\delta}_r\|}{(s-r)^{\alpha+1}}
%		+\frac{1+\|\tilde{X}^{\varepsilon,\delta}_r\|}{(s-r)^{\alpha+1-\gamma}(t-s)^\gamma}\Big)\mathrm{d}r\bigg)\mathrm{d}s
%		\notag\\
		\leq& C \Lambda\int_0^t \big(1+\|\tilde{X}^{\varepsilon,\delta}_s\|\big)\left(s^{-\alpha}+(t-s)^{-\alpha}\right)\mathrm{d}s\notag\\
		&+
		C \Lambda\int_0^t\int_0^s\frac{\|\tilde{X}^{\varepsilon,\delta}_s-\tilde{X}^{\varepsilon,\delta}_r\|}{(s-r)^{\alpha+1}}\mathrm{d}r\mathrm{d}s.\label{eqn-3.13}
	\end{align}
	Here, the first inequality is obtained by Remark \ref{re2.1}, Assumption \textbf{(A3)}, $\eqref{eqn-3.1}$ and $\eqref{eqn-3.3}$.
	We obtain the second inequality by using Fubini's theorem and Lemma \ref{lem2.1}.
	Then, from Lemma \ref{lem2.2} and  \eqref{eqn-3.13}, for $\varepsilon\in(0,1)$, we have
	\begin{align}
		I_2
		\leq C\Lambda\left(1+\rho^{\alpha-1}\|\tilde{X}^{\varepsilon,\delta}\|_{\rho,T}+\rho^{-1}\|\tilde{X}^{\varepsilon,\delta}\|_{1,\rho,T}\right).\label{eqn-3.14}
	\end{align}
	According to Lemma \ref{lem4.2}, we have
	%$$
	%\begin{aligned}
	%	\int_0^tS_{t-s} g(\tilde{X}^{\varepsilon,\delta}_s) du^\varepsilon_s
	%	=\sum_{i=1}^{\infty} \sqrt{\lambda_i}\int_0^tS_{t-s} g(\tilde{X}^{\varepsilon,\delta}_s)e_i dQ_1^{-\frac12}u^\varepsilon_se_i,
	%\end{aligned}
	%$$
	%where the convergence of the sum above is understood as $\mathbb{P}$-a.s. in $V$. Then, we have
	$$
	\Big\|\int_0^tS_{t-s} g(\tilde{X}^{\varepsilon,\delta}_s) \mathrm{d}u^\varepsilon_s\Big\|
	\leq\Lambda^{0,T}_{\alpha,u^\varepsilon}\sup_{i\in\mathbb{N}} \|S_{t-\cdot} g(\tilde{X}^{\varepsilon,\delta}_\cdot)e_i\|_{\alpha,1}.
	$$
	Note that
	$\Lambda^{0,T}_{\alpha,u^\varepsilon}\leq C\|u^\varepsilon\|_{\mathcal{H}^H}<\infty$, $\mathbb{P}$-a.s.
	Similar to the calculation of $I_2$, from Assumption \textbf{(A3)},  $\eqref{eqn-3.1}$ and $\eqref{eqn-3.3}$, we have
	\begin{align}
		I_3
		\leq C \|u^\varepsilon\|_{\mathcal{H}^H}
		\left(1+\rho^{\alpha-1}\|\tilde{X}^{\varepsilon,\delta}\|_{\rho,T}+\rho^{-1}\|\tilde{X}^{\varepsilon,\delta}\|_{1,\rho,T}\right).\label{eqn-3.15}
	\end{align}
	Substituting  $\eqref{eqn-3.12}$-$\eqref{eqn-3.15}$ into  $\eqref{eqn-3.11}$ leads to
	\begin{equation}\label{eqn-3.16}
		\|\tilde{X}^{\varepsilon,\delta}\|_{\rho,T}\leq C(\Lambda+\|u^\varepsilon\|_{\mathcal{H}^H}) \left(1+\rho^{\alpha-1}\|\tilde{X}^{\varepsilon,\delta}\|_{\rho,T}+\rho^{-1}\|\tilde{X}^{\varepsilon,\delta}\|_{1,\rho,T}\right).
	\end{equation}
	
	It proceeds to estimate $\|\tilde{X}^{\varepsilon,\delta}\|_{1,\rho,T}$. Firstly, we give some prior estimates of
	$A_1:=\|\int_s^t S_{t-r} g(\tilde{X}^{\varepsilon,\delta}_r) \mathrm{d}B^H_r\|$
	and
	$A_2:=\|\int_0^s \left(S_{t-r}-S_{s-r} \right) g(\tilde{X}^{\varepsilon,\delta}_r) \mathrm{d}B^H_r\|.$
	By similar steps as for the terms $\mathcal{K}_1(s,t)$ and $\mathcal{K}_2(s,t)$ in \cite[Lemma 3.4]{Pei2020}, from Remark \ref{re2.1}, Assumption \textbf{(A3)},  $\eqref{eqn-3.1}$ and $\eqref{eqn-3.3}$, 
%	for any $\gamma\in(\frac12,1-\alpha)$, 
	we have
	\begin{align*}
		A_1
		\leq&C\Lambda^{0,t}_{\alpha,B^H} \int_s^t \Big( \big(1+\|\tilde{X}^{\varepsilon,\delta}_r\|\big)\big((r-s)^{-\alpha}+(t-r)^{-\alpha}\big)\mathrm{d}r+  \int_s^r \frac{\|\tilde{X}^{\varepsilon,\delta}_r-\tilde{X}^{\varepsilon,\delta}_q\|}{(r-q)^{\alpha+1}}\mathrm{d}q \Big)\mathrm{d}r.
	\end{align*}
	By Remark \ref{re2.1}, Assumption \textbf{(A3)},  $\eqref{eqn-3.2}$ and $\eqref{eqn-3.4}$,	for any $\gamma\in(\frac12,1-\alpha)$,  we have
	\begin{align*}
		A_2
		\leq &C\Lambda^{0,s}_{\alpha,B^H} (t-s)^{\gamma} \int_0^s  \left(1+\|\tilde{X}^{\varepsilon,\delta}_r\|\right)\left((s-r)^{-\gamma}r^{-\alpha}+(s-r)^{-\alpha-\gamma}\right)\mathrm{d}r\\
		&+C\Lambda^{0,s}_{\alpha,B^H} (t-s)^{\gamma} \int_0^s  (s-r)^{-\gamma}\int_0^r \frac{\|\tilde{X}^{\varepsilon,\delta}_r-\tilde{X}^{\varepsilon,\delta}_q\|}{(r-q)^{\alpha+1}}\mathrm{d}q\mathrm{d}r	.
	\end{align*}
	Through direct calculation, we obtain
	\begin{align}
		\|\tilde{X}^{\varepsilon,\delta}\|_{1,\rho,T}
		\leq&\sup_{t\in[0,T]}e^{-\rho t}\int_0^t \frac{\|(S_t-S_s)X_0\|}{(t-s)^{\alpha+1}}\mathrm{d}s \notag\\
		&+ \sup_{t\in[0,T]}e^{-\rho t}\int_0^t \frac{\|\int_0^t S_{t-r}b(\tilde{X}^{\varepsilon,\delta}_r,\tilde{Y}^{\varepsilon,\delta}_r)  \mathrm{d}r-\int_0^s S_{s-r}b(\tilde{X}^{\varepsilon,\delta}_r,\tilde{Y}^{\varepsilon,\delta}_r) \mathrm{d}r\|}{(t-s)^{ \alpha+1}}\mathrm{d}s\notag\\
		&+ \sup_{t\in[0,T]}\sqrt\varepsilon  e^{-\rho t}\int_0^t \frac{\|\int_0^t S_{t-r}g(\tilde{X}^{\varepsilon,\delta}_r)  \mathrm{d}B^H_r-\int_0^s S_{s-r}g(\tilde{X}^{\varepsilon,\delta}_r)  \mathrm{d}B^H_r\|}{(t-s)^{ \alpha+1}}\mathrm{d}s\notag\\ 
		&+ \sup_{t\in[0,T]}e^{-\rho t}\int_0^t \frac{\|\int_0^t S_{t-r}g(\tilde{X}^{\varepsilon,\delta}_r)  \mathrm{d}u^\varepsilon_r-\int_0^s S_{s-r}g(\tilde{X}^{\varepsilon,\delta}_r)  \mathrm{d}u^\varepsilon_r\|}{(t-s)^{ \alpha+1}}\mathrm{d}s\notag\\  
		=:&H_1+H_2+H_3+H_4. \label{eqn-3.17}
	\end{align}
	For the term $H_1$, by  $\eqref{eqn-3.2}$, we deduce that
	\begin{align}
		H_1
		%\leq\sup_{t\in[0,T]}e^{-\rho t}\int_0^t \frac{\|(S_{t-s}-\text{id})\|_{L(V_\beta,V)}\|X_0\|_{V_\beta}}{(t-s)^{1+\alpha}}\mathrm{d}s
		\leq \sup_{t\in[0,T]}e^{-\rho t}\int_0^t \frac{\|X_0\|_{V_\beta}}{(t-s)^{\alpha+1-\beta}}\mathrm{d}s
		\leq C\|X_0\|_{V_\beta}. \label{eqn-3.18}
	\end{align}
	From Assumption \textbf{(A6)}, Fubini's theorem,  $\eqref{eqn-3.1}$ and $\eqref{eqn-3.2}$, for any $\gamma\in(\frac12,1-\alpha)$, we have 
	\begin{align}
		H_2
		%	\leq&\sup_{t\in[0,T]}e^{-\rho t}\bigg\{\int_0^t \frac{\|\int_s^t S_{t-r}b(\tilde{X}^{\varepsilon,\delta}_r,\tilde{Y}^{\varepsilon,\delta}_r)  \mathrm{d}r\|}{(t-s)^{ \alpha+1}}\mathrm{d}s         \bigg\}\notag\\
		%	&+\sup_{t\in[0,T]}e^{-\rho t}\bigg\{ \int_0^t \frac{\|\int_0^s (S_{t-r}-S_{s-r})b(\tilde{X}^{\varepsilon,\delta}_r,\tilde{Y}^{\varepsilon,\delta}_r)\mathrm{d}r\|}{(t-s)^{ \alpha+1}}\mathrm{d}s        \bigg\}\notag\\
		\leq& C\big(1+\|\tilde{X}^{\varepsilon,\delta}\|_{\rho,T}\big) \sup_{t\in[0,T]}\int_0^t e^{-\rho (t-r)}  \int_0^r (t-s)^{-1-\alpha} \mathrm{d}s\mathrm{d}r\notag\\ 
		&+C\big(1+\|\tilde{X}^{\varepsilon,\delta}\|_{\rho,T}\big)\sup_{t\in[0,T]}\int_0^t (t-s)^{\gamma-1-\alpha} \int_0^s (s-r)^{-\gamma} e^{-\rho (t-r)}   \mathrm{d}r\mathrm{d}s\notag\\
		\leq& C \big(1+ \rho^{2\alpha-1}\|\tilde{X}^{\varepsilon,\delta}\|_{\rho,T}\big). \label{eqn-3.19}
	\end{align}
	For the term $H_3$, from the estimate conclusion of $A_1$ and $A_2$, for any $\gamma\in(\frac12,1-\alpha)$, we have 
	\begin{align}
		H_3
		\leq&C\Lambda
		\sup_{t\in[0,T]} \int_0^t e^{-\rho t}  \frac  {\int_s^t (1+\|\tilde{X}^{\varepsilon,\delta}_r\|)\left((r-s)^{-\alpha}+(t-r)^{-\alpha}\right)\mathrm{d}r}{(t-s)^{\alpha+1}}\mathrm{d}s\notag\\
		&+C\Lambda
		\sup_{t\in[0,T]} \int_0^t e^{-\rho t}  (t-s)^{-\alpha-1} \int_s^t\int_s^r \frac{\|\tilde{X}^{\varepsilon,\delta}_r-\tilde{X}^{\varepsilon,\delta}_q\|}{(r-q)^{\alpha+1}}\mathrm{d}q\mathrm{d}r\mathrm{d}s\notag\\
		&+C\Lambda
		\sup_{t\in[0,T]} \int_0^t e^{-\rho t}  \frac {\int_0^s(1+\|\tilde{X}^{\varepsilon,\delta}_r\| )\left((s-r)^{-\gamma}r^{-\alpha}+(s-r)^{-\alpha-\gamma}\right)\mathrm{d}r}{(t-s)^{\alpha+1-\gamma}}\mathrm{d}s\notag\\
		&+C\Lambda 
		\sup_{t\in[0,T]} \int_0^t e^{-\rho t}  (t-s)^{\gamma-\alpha-1} \int_0^s (s-r)^{-\gamma}\int_0^r \frac{\|\tilde{X}^{\varepsilon,\delta}_r-\tilde{X}^{\varepsilon,\delta}_q\|}{(r-q)^{\alpha+1}}\mathrm{d}q\mathrm{d}r\mathrm{d}s\notag\\
		=:&H_{31}+H_{32}+H_{33}+H_{34}. \label{eqn-3.20}
	\end{align}
	By Lemmas \ref{lem2.1}, \ref{lem2.2} and Fubini's theorem, we deduce that
	\begin{align}
		H_{31}+H_{33}
		\leq C\Lambda   
		\left(1+ \rho^{2\alpha-1}\|\tilde{X}^{\varepsilon,\delta}\|_{\rho,T}   \right),
		\label{eqn-3.21}
	\end{align}
	and
	\begin{align}
		H_{32}+H_{34}
		\leq&C\Lambda \rho^{\alpha-1}\|\tilde{X}^{\varepsilon,\delta}\|_{1,\rho,T}. \label{eqn-3.22}
	\end{align}
	Combining  $\eqref{eqn-3.20}$-$\eqref{eqn-3.22}$, we have
	\begin{align}
		H_3\leq C\Lambda \left(1+\rho^{2\alpha-1}\|\tilde{X}^{\varepsilon,\delta}\|_{\rho,T}+\rho^{\alpha-1}\|\tilde{X}^{\varepsilon,\delta}\|_{1,\rho,T}\right). \label{eqn-3.23}
	\end{align}
	
	Similar to the calculation of $A_1$ and $A_2$, from  $\eqref{eqn-3.1}$-$\eqref{eqn-3.3}$, Assumption \textbf{(A3)} and Lemma \ref{lem4.2}, for any $\gamma\in(\frac12,1-\alpha)$, we can obtain
	\begin{align}
		\Big\|\int_s^t S_{t-r} g(\tilde{X}^{\varepsilon,\delta}_r) \mathrm{d}u^\varepsilon_r\Big\|
		\leq &C\Lambda^{0,t}_{\alpha,u^\varepsilon}  \int_s^t  (1+\|\tilde{X}^{\varepsilon,\delta}_r\|)\left((r-s)^{-\alpha}+(t-r)^{-\alpha}\right) \mathrm{d}r\notag\\
		&+C\Lambda^{0,t}_{\alpha,u^\varepsilon}  \int_s^t    \int_s^r \frac{\|\tilde{X}^{\varepsilon,\delta}_r-\tilde{X}^{\varepsilon,\delta}_q\|}{(r-q)^{1+\alpha}}\mathrm{d}q\mathrm{d}r,\label{eqn-4.50}
	\end{align}
	and
	\begin{align}
		\Big\|\int_0^s \left(S_{t-r}-S_{s-r} \right) g(\tilde{X}^{\varepsilon,\delta}_r) \mathrm{d}u^\varepsilon_r\Big\|
		\leq&C\Lambda^{0,s}_{\alpha,u^\varepsilon}  \int_0^s  \big(1+\|\tilde{X}^{\varepsilon,\delta}_r\|\big)\frac{r^{-\alpha}+(s-r)^{-\alpha}}{(t-s)^{-\gamma}(s-r)^{\gamma}}\mathrm{d}r\notag\\
		&+C\Lambda^{0,s}_{\alpha,u^\varepsilon}  \int_0^s  \frac{(t-s)^{\gamma}}{(s-r)^{\gamma}}\int_0^r \frac{\|\tilde{X}^{\varepsilon,\delta}_r-\tilde{X}^{\varepsilon,\delta}_q\|}{(r-q)^{1+\alpha}}\mathrm{d}q\mathrm{d}r.\label{eqn-4.51}
	\end{align}
	Then, according to the fact that $\Lambda^{0,T}_{\alpha,u^\varepsilon}\leq C\|u^\varepsilon\|_{\mathcal{H}^H}$, we have
	\begin{align}
		H_4
		\leq C\|u^\varepsilon\|_{\mathcal{H}^H}\big(1+\rho^{2\alpha-1}\|\tilde{X}^{\varepsilon,\delta}\|_{\rho,T}+\rho^{\alpha-1}\|\tilde{X}^{\varepsilon,\delta}\|_{1,\rho,T}\big). \label{eqn-3.24}
	\end{align}
	Combining  $\eqref{eqn-3.17}$-$\eqref{eqn-3.19}$, $\eqref{eqn-3.23}$ and $\eqref{eqn-3.24}$, we have
	\begin{equation}\label{eqn-3.25}
		\|\tilde{X}^{\varepsilon,\delta}\|_{1,\rho,T}\leq C(\Lambda+\|u^\varepsilon\|_{\mathcal{H}^H}) \big(1+\rho^{2\alpha-1}\|\tilde{X}^{\varepsilon,\delta}\|_{\rho,T}+\rho^{\alpha-1}\|\tilde{X}^{\varepsilon,\delta}\|_{1,\rho,T}\big).
	\end{equation}

	Putting $\rho=\big(4C(\Lambda+\|u^\varepsilon\|_{\mathcal{H}^H})\big)^{\frac{1}{1-\alpha}}$, we get from the inequality $\eqref{eqn-3.16}$ that
	\begin{equation}\label{eqn-3.26}
		\|\tilde{X}^{\varepsilon,\delta}\|_{\rho,T}\leq \frac43 C(\Lambda+\|u^\varepsilon\|_{\mathcal{H}^H}) \big(1+\rho^{-1}\|\tilde{X}^{\varepsilon,\delta}\|_{1,\rho,T}\big).
	\end{equation}
	Plugging \eqref{eqn-3.26} into $\eqref{eqn-3.25}$ and making simple transformations, we have
	$$\|\tilde{X}^{\varepsilon,\delta}\|_{1,\rho,T}\leq\frac32C(\Lambda+\|u^\varepsilon\|_{\mathcal{H}^H})+2\big(C(\Lambda+\|u^\varepsilon\|_{\mathcal{H}^H})\big)^{\frac1{1-\alpha}}\leq C(\Lambda+\|u^\varepsilon\|_{\mathcal{H}^H})^{\frac1{1-\alpha}}.$$
	Substituting the inequality above into  $\eqref{eqn-3.26}$, we get
	$\|\tilde{X}^{\varepsilon,\delta}\|_{\rho,T}\leq  C(\Lambda+\|u^\varepsilon\|_{\mathcal{H}^H})^{\frac1{1-\alpha}}.$
	Thus, we have
	$$
	\begin{aligned}
		\|\tilde{X}^{\varepsilon,\delta}\|_{\alpha,\infty}\leq&e^{\rho T}\big(\|\tilde{X}^{\varepsilon,\delta}\|_{\rho,T}+\|\tilde{X}^{\varepsilon,\delta}\|_{1,\rho,T}\big)\\
		\leq&C\exp\big(C(\Lambda+\|u^\varepsilon\|_{\mathcal{H}^H})^{\frac{1}{1-\alpha}}\big)(\Lambda+\|u^\varepsilon\|_{\mathcal{H}^H})^{\frac{1}{1-\alpha}}.
	\end{aligned}
	$$
	According to the fact that $0<\frac{1}{1-\alpha}<2$, assumption $(u^\varepsilon,v^\varepsilon)\in\mathcal{A}_b^N$ and Fernique's theorem (cf. \cite{Prato2014}), 
	%we have
	%$$\mathbb{E}\left[\exp\{Cp(\Lambda+\|u^\varepsilon\|_{\mathcal{H}^H})^{\frac{1}{1-\alpha}}\}\right]<\infty,$$
	%and
	%$$\mathbb{E}\left[(\Lambda+\|u^\varepsilon\|_{\mathcal{H}^H})^{\frac{p}{1-\alpha}}\right]<\infty.$$
	%Then, from H\"{o}lder's inequality, 
	the statement follows.
\end{proof}

\begin{lemma}\label{lem4.4}
	Suppose that Assumptions \textnormal{\textbf{(A1)}-\textbf{(A3)}} and \textnormal{\textbf{(A6)}} hold. 
	Let $N\in\mathbb{N}$. Then, there exists a constant $C>0$, such that  for any $(u^\varepsilon,v^\varepsilon)\in\mathcal{A}_b^N$, $\beta\in\left(\frac12,1-\alpha\right)$ and $0\leq s<t\leq T$, we have
	$$\mathbb{E}\left[\|\tilde{X}^{\varepsilon,\delta}_t-\tilde{X}^{\varepsilon,\delta}_s\|^2\right]\leq C|t-s|^{2\beta }.$$
	Here, $C$ is a constant which only depends on $T$ and $N$.
\end{lemma}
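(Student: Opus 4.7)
The plan is to begin from the mild formulation of the controlled slow equation $\eqref{eqn-4.9}$, write
\begin{align*}
\tilde{X}^{\varepsilon,\delta}_t - \tilde{X}^{\varepsilon,\delta}_s
= (S_t-S_s)X_0
&+ \Big(\int_s^t S_{t-r} b \, \mathrm{d}r + \int_0^s (S_{t-r}-S_{s-r})b \, \mathrm{d}r\Big) \\
&+ \sqrt\varepsilon \Big(\int_s^t S_{t-r} g \, \mathrm{d}B_r^H + \int_0^s (S_{t-r}-S_{s-r})g \, \mathrm{d}B_r^H\Big) \\
&+ \Big(\int_s^t S_{t-r} g \, \mathrm{d}u^\varepsilon_r + \int_0^s (S_{t-r}-S_{s-r})g \, \mathrm{d}u^\varepsilon_r\Big),
\end{align*}
and estimate each block separately, with the goal of producing a $(t-s)^\beta$ factor out of every piece.

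The initial-condition block is immediate from \eqref{eqn-3.2}: since $X_0 \in V_{\beta'}$ for some $\beta'>\alpha$ (and we may take $\beta'\ge\beta$ by domain inclusion), $\|(S_t-S_s)X_0\| \le C(t-s)^\beta \|X_0\|_{V_{\beta'}}$. For the drift block, Assumption \textbf{(A6)} gives $\|b(\tilde X_r,\tilde Y_r)\| \le C_6(1+\|\tilde X_r\|)$ without any dependence on $\tilde Y$, so combining \eqref{eqn-3.1}, \eqref{eqn-3.3} (with exponent $\varrho=\beta$) and elementary Beta-function type computations as in Lemmas \ref{lem2.1}–\ref{lem2.2}, both drift integrals are bounded by $C(t-s)^\beta\,(1+\|\tilde X^{\varepsilon,\delta}\|_{\alpha,\infty})$. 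For the two stochastic blocks I mimic the estimates of $A_1$ and $A_2$ carried out inside the proof of Lemma \ref{lem4.3}: applying Remark \ref{re2.1} for the FBM integrals and Lemma \ref{lem4.2} together with $\Lambda^{0,T}_{\alpha,u^\varepsilon} \le C\|u^\varepsilon\|_{\mathcal{H}^H}$ for the control integrals, one reduces to the pathwise bounds \eqref{eqn-3.3}–\eqref{eqn-3.4} and the Lipschitz/linear-growth information on $g$ from Assumption \textbf{(A3)}. Choosing the semigroup exponent $\gamma$ in $\eqref{eqn-3.3}$–$\eqref{eqn-3.4}$ in the range $\beta < \gamma < 1-\alpha$ extracts the desired $(t-s)^\beta$ prefactor while keeping all the $r$-integrals finite.

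After collecting the four bounds, squaring and using Cauchy–Schwarz in the Bochner integrals, taking expectation reduces the problem to three ingredients already at hand: the moment estimate $\mathbb{E}[\|\tilde X^{\varepsilon,\delta}\|_{\alpha,\infty}^p] \le C$ from Lemma \ref{lem4.3}, Fernique's theorem to bound $\mathbb{E}[(\Lambda^{0,T}_{\alpha,B^H})^p]$, and the deterministic estimate $\Lambda^{0,T}_{\alpha,u^\varepsilon} \le C\sqrt{2N}$ valid uniformly for $(u^\varepsilon,v^\varepsilon)\in\mathcal{A}_b^N$. The main obstacle is the stochastic and control blocks: since $B^H$ is not a semimartingale one cannot appeal to BDG-type inequalities, so the Hölder exponent $2\beta$ has to be produced purely from the deterministic semigroup kernels \eqref{eqn-3.3}–\eqref{eqn-3.4}; balancing the singularity at $r=s$ in $\int_s^t$ against the one at $r=0$ in $\int_0^s$, and making sure the resulting $\mathrm{d}r$-integrals remain integrable, is precisely where the constraint $\beta \in (1/2, 1-\alpha)$ is used.
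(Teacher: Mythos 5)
Your proposal is correct and takes essentially the same route as the paper's proof: the same four-block decomposition of the increment (initial condition, drift, FBM integral, control integral, each split into $\int_s^t$ and $\int_0^s$ contributions), the same reuse of the $A_1$/$A_2$-type pathwise estimates from Lemma \ref{lem4.3} together with Remark \ref{re2.1}, Lemma \ref{lem4.2} and the semigroup bounds \eqref{eqn-3.1}--\eqref{eqn-3.4}, and the same conclusion via H\"older's inequality, the moment bound of Lemma \ref{lem4.3} (with Fernique for the moments of $\Lambda^{0,T}_{\alpha,B^H}$) and the a.s. bound $\Lambda^{0,T}_{\alpha,u^\varepsilon}\le C\|u^\varepsilon\|_{\mathcal{H}^H}\le C\sqrt{2N}$. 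The only cosmetic differences are that the paper takes the exponent in \eqref{eqn-3.3} equal to $\beta$ itself rather than some $\gamma>\beta$ and restricts to $0<t-s<1$, which changes nothing of substance.
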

\begin{proof}
	In this proof, $C$ is a positive constant depending on only $T$ and $N$, which may
	change from line to line. 
	Firstly, we also denote $\Lambda:=\Lambda_{\alpha,B^H}^{0,T}\vee1$. Through direct calculation, we obtain
	\begin{align}
		\|\tilde{X}^{\varepsilon,\delta}_t-\tilde{X}^{\varepsilon,\delta}_s\|
		\leq&C(t-s)^\beta\|X_0\|_{V_\beta} \notag\\
		&+\Big\| \int_s^t S_{t-r}b(\tilde{X}^{\varepsilon,\delta}_r,\tilde{Y}^{\varepsilon,\delta}_r)\mathrm{d}r + \int_0^s(S_{t-r}-S_{s-r})b(\tilde{X}^{\varepsilon,\delta}_r,\tilde{Y}^{\varepsilon,\delta}_r)\mathrm{d}r \Big\|\notag\\
		&+\sqrt\varepsilon\Big\| \int_s^t S_{t-r}g(\tilde{X}^{\varepsilon,\delta}_r)\mathrm{d}B^H_r + \int_0^s(S_{t-r}-S_{s-r})g(\tilde{X}^{\varepsilon,\delta}_r)\mathrm{d}B^H_r \Big\|\notag\\
		&+\Big\| \int_s^t S_{t-r}g(\tilde{X}^{\varepsilon,\delta}_r)\mathrm{d}u^\varepsilon_r + \int_0^s(S_{t-r}-S_{s-r})g(\tilde{X}^{\varepsilon,\delta}_r)\mathrm{d}u^\varepsilon_r \Big\|\notag\\
		=:&C(t-s)^\beta\|X_0\|_{V_\beta}+B_1+B_2+B_3.\label{eqn-3.27}
	\end{align}
	By  $\eqref{eqn-3.1}$, $\eqref{eqn-3.2}$,  Assumption \textbf{(A6)} and  \eqref{eqn-3.12} in Lemma \ref{lem4.3}, we deduce that 
	%\begin{align}
	%	B_1
	%	\leq& C e^{\rho T} \Big(1+\sup_{t\in[0,T]}e^{-\rho t}\|\tilde{X}^{\varepsilon,\delta}_t\| \Big)   \left((t-s)+ (t-s)^\beta \int_0^s (s-r)^{-\beta} \mathrm{d}r  \right)\notag \\
	%	\leq& Ce^{\rho T} \big(1+\|\tilde{X}^{\varepsilon,\delta}\|_{\rho,T}\big)\left((t-s)+(t-s)^\beta\right). \label{eqn-3.29}
	%\end{align}
	\begin{align}
		B_1
		\leq& C  \Big(1+\sup_{t\in[0,T]}\|\tilde{X}^{\varepsilon,\delta}_t\| \Big)   \left((t-s)+ (t-s)^\beta \int_0^s (s-r)^{-\beta} \mathrm{d}r  \right)\notag \\
		\leq& C \big(1+\|\tilde{X}^{\varepsilon,\delta}\|_{\alpha,\infty}\big)\left((t-s)+(t-s)^\beta\right). \label{eqn-3.29}
	\end{align}
	From Lemma \ref{lem2.1}, the estimates of $A_1$ and $A_2$ in Lemma \ref{lem4.3} and $\varepsilon\in(0,1)$, we have
	\begin{align}
		B_2
		\leq&C\Lambda  \int_s^t  \big(1+\|\tilde{X}^{\varepsilon,\delta}_r\|\big)\left((r-s)^{-\alpha}+(t-r)^{-\beta}(r-s)^{\beta-\alpha}\right) \mathrm{d}r\notag\\
		&+C\Lambda \int_s^t \int_s^r \frac{\|\tilde{X}^{\varepsilon,\delta}_r-\tilde{X}^{\varepsilon,\delta}_q\|}{(r-q)^{\alpha+1}}\mathrm{d}q\mathrm{d}r\notag\\
		&+C\Lambda (t-s)^\beta \int_0^s  \big(1+\|\tilde{X}^{\varepsilon,\delta}_r\|\big)\left((s-r)^{-\beta}r^{-\alpha}+(s-r)^{-\alpha-\beta}\right)\mathrm{d}r\notag\\
		&+C\Lambda (t-s)^\beta \int_0^s(s-r)^{-\beta}\int_0^r \frac{\|\tilde{X}^{\varepsilon,\delta}_r-\tilde{X}^{\varepsilon,\delta}_q\|}{(r-q)^{\alpha+1}}\mathrm{d}q\mathrm{d}r\notag\\
		\leq&C\Lambda\big(1+\|\tilde{X}^{\varepsilon,\delta}\|_{\alpha,\infty}\big)\big((t-s)^{1-\alpha}+(t-s)^\beta+(t-s)\big). \label{eqn-3.30}
	\end{align}

	Similar to the calculation of  $\eqref{eqn-3.30}$, we obtain
	\begin{align}
		B_3
		\leq C\Lambda^{0,t}_{\alpha,u^\varepsilon}\left(1+\|\tilde{X}^{\varepsilon,\delta}\|_{\alpha,\infty}\right)\left((t-s)^{1-\alpha}+(t-s)^\beta+(t-s)\right).\label{eqn-3.31}
	\end{align}

	Without loss of generality, let $0<t-s<1$. Since  $\beta\in\left(\frac12,1-\alpha\right)$ and $(u^\varepsilon,v^\varepsilon)\in\mathcal{A}_b^N$, from  $\eqref{eqn-3.27}$-$\eqref{eqn-3.31}$, H\"{o}lder's inequality and Lemma \ref{lem4.3}, we have
	
	$$
	\begin{aligned}
		\mathbb{E}\big[\|\tilde{X}^{\varepsilon,\delta}_t-\tilde{X}^{\varepsilon,\delta}_s\|^2\big]
		\leq&C|t-s|^{2 \beta}.
	\end{aligned}
	$$
	The conclusion is verified.
\end{proof}

%\begin{remark}\label{re4.2}
%	Under Assumptions \textnormal{\textbf{(A1)}-\textbf{(A3)}} and \textnormal{\textbf{(A6)}}, for any $T>0$, the family of processes $\{\tilde{X}^{\varepsilon,\delta}: \varepsilon \in(0,1),~ (u^\varepsilon,v^\varepsilon)\in\mathcal{A}_b^N\}$ is tight in $C([0,T],V)$. This conclusion can be obtained from \cite[Lemma 16.9]{Kallenberg2001}.
%\end{remark}

\begin{lemma}\label{lem4.5}
	Suppose that Assumptions \textnormal{\textbf{(A1)}-\textbf{(A3)}} and \textnormal{\textbf{(A6)}} hold. Let $N\in\mathbb{N}$ and $t\in[0,T]$. Then, for any $(u^\varepsilon,v^\varepsilon)\in\mathcal{A}_b^N$, $\sup_{0\leq s\leq t}\|\tilde{Y}^{\varepsilon, \delta}_s\|$ has moments of all orders.
\end{lemma}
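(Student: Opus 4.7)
The plan is to establish the moment bound by estimating each term in the mild formulation
\[
\tilde{Y}^{\varepsilon,\delta}_t = S_{t/\delta}Y_0 + \frac{1}{\delta}\int_0^t S_{(t-s)/\delta}F(\tilde{X}^{\varepsilon,\delta}_s,\tilde{Y}^{\varepsilon,\delta}_s)\,\mathrm{d}s + \frac{1}{\sqrt{\delta\varepsilon}}\int_0^t S_{(t-s)/\delta}G\,\mathrm{d}v^\varepsilon_s + \frac{1}{\sqrt{\delta}}\int_0^t S_{(t-s)/\delta}G\,\mathrm{d}W_s
\]
separately, and closing the argument by Gronwall's lemma. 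Since $\varepsilon$, $\delta$, $N$ are fixed, the constants may depend on them, and I will use the contractive semigroup estimate $\|S_{t/\delta}\|_{L(V)}\leq e^{-\bar{\lambda}_1 t/\delta}$ throughout.

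The crucial simplification comes from Assumption \textbf{(A6)}: $\|G(\tilde{X}^{\varepsilon,\delta}_s,\tilde{Y}^{\varepsilon,\delta}_s)\|_{HS}\leq C_6(1+\|\tilde{X}^{\varepsilon,\delta}_s\|)$ uniformly in $\tilde{Y}^{\varepsilon,\delta}$, which decouples the control and stochastic terms from $\tilde{Y}^{\varepsilon,\delta}$. For the control term, I apply Cauchy--Schwarz together with the constraint $\int_0^T\|(v^\varepsilon)'_s\|_2^2\,\mathrm{d}s\leq 2N$ from the definition of $\mathcal{A}_b^N$; the factor $\sqrt{\delta}$ arising from the $L^2$ mass of the semigroup kernel cancels one power of $\delta$ in the prefactor, yielding a bound of order $\varepsilon^{-1/2}(1+\sup_{s\leq t}\|\tilde{X}^{\varepsilon,\delta}_s\|)$. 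For the stochastic convolution, the factorization method of Da Prato--Kwapien--Zabczyk combined with the Burkholder--Davis--Gundy and Jensen inequalities produces an $L^p$ bound of the form $C_p(\varepsilon,\delta,T)(1+\mathbb{E}[\sup_{s\leq t}\|\tilde{X}^{\varepsilon,\delta}_s\|^p])$, which is finite by Lemma \ref{lem4.3}. The drift term, by the linear growth in \textbf{(A2)}, is controlled by a convolution of $\|\tilde{Y}^{\varepsilon,\delta}\|$ against the approximate-identity kernel $\delta^{-1}e^{-\bar{\lambda}_1\cdot/\delta}$ (of total mass $\bar{\lambda}_1^{-1}$); Jensen's inequality then passes the $p$-th power inside the convolution.

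Putting everything together produces an integral inequality of the form
\[
\mathbb{E}\Big[\sup_{r\leq s}\|\tilde{Y}^{\varepsilon,\delta}_r\|^p\Big]\leq C_1 + C_2\int_0^s \mathbb{E}\Big[\sup_{r\leq u}\|\tilde{Y}^{\varepsilon,\delta}_r\|^p\Big]\,\mathrm{d}u,
\]
to which Gronwall's lemma applies. The main technical obstacle is the stochastic convolution term: because its integrand depends on the running time through $S_{(s-r)/\delta}$, the BDG inequality cannot be applied directly inside the supremum over $s$, and one must invoke the factorization trick to recast the stochastic convolution as the Bochner integral of a pathwise regular process. The large coefficient $1/\sqrt{\delta\varepsilon}$ in front of the control term does not cause any issue here because only finiteness (and not uniformity in $\varepsilon,\delta$) of the moments is required.
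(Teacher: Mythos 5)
Your proof is correct and follows essentially the same route as the paper's (Appendix \ref{secA.4}): mild formulation of $\tilde{Y}^{\varepsilon,\delta}$, decoupling of the control and noise terms from $\tilde{Y}^{\varepsilon,\delta}$ via \textbf{(A6)}, Cauchy--Schwarz together with the $\mathcal{A}_b^N$ constraint for the control integral, Lemma \ref{lem4.3} for the $\tilde{X}^{\varepsilon,\delta}$ moments, and Gronwall to close. The only (minor, and in fact welcome) difference is that you justify the supremum of the stochastic convolution by the factorization method, whereas the paper applies a BDG-type maximal estimate to the convolution directly, which is legitimate here since the semigroup is contractive.
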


We provide the complete proof of Lemma \ref{lem4.5} in Appendix \ref{secA.4}.

\begin{lemma}\label{lem4.6}
	Suppose that Assumptions \textnormal{\textbf{(A1)}-\textbf{(A7)}} hold. Let $N\in\mathbb{N}$. Then, for any $(u^\varepsilon,v^\varepsilon)\in\mathcal{A}_b^N$, there exists a constant $C>0$, such that 
	$$\int_0^T \mathbb{E}\big[\|\tilde{Y}^{\varepsilon,\delta}_t\|^2\big]\mathrm{d}t \leq C.$$
	Here, $C$ is a positive constant which only  depends on $T$ and $N$.
\end{lemma}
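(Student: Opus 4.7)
The plan is to apply It\^o's formula (justified via a Yosida/Galerkin approximation of $A$, followed by passage to the limit, as is standard for mild solutions of dissipative SPDEs) to $\|\tilde{Y}^{\varepsilon,\delta}_t\|^2$ and extract a sharp differential inequality from the strong dissipativity of the fast dynamics. For the drift contributions, I combine $\langle y, Ay\rangle \leq -\bar{\lambda}_1\|y\|^2$ with Assumption \textbf{(A4)} to get
$$\frac{2}{\delta}\langle \tilde{Y}^{\varepsilon,\delta}_t, A\tilde{Y}^{\varepsilon,\delta}_t + F(\tilde{X}^{\varepsilon,\delta}_t, \tilde{Y}^{\varepsilon,\delta}_t)\rangle \leq -\frac{2(\bar{\lambda}_1+\beta_1)}{\delta}\|\tilde{Y}^{\varepsilon,\delta}_t\|^2 + \frac{2\beta_2}{\delta}.$$
The quadratic variation from $dW_s$ contributes $\frac{1}{\delta}\|G(\tilde{X}^{\varepsilon,\delta}_t,\tilde{Y}^{\varepsilon,\delta}_t)Q_2^{1/2}\|_{HS}^2$, which by Assumption \textbf{(A6)} is bounded by $\frac{C}{\delta}(1+\|\tilde{X}^{\varepsilon,\delta}_t\|^2)$ uniformly in $y$; this is the key reason \textbf{(A6)} is needed (in contrast to merely using \textbf{(A2)}, which would reintroduce $\|\tilde{Y}\|^2$ into the noise term).

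The delicate term is the control piece $\frac{2}{\sqrt{\delta\varepsilon}}\langle \tilde{Y}^{\varepsilon,\delta}_t, G(\tilde{X}^{\varepsilon,\delta}_t,\tilde{Y}^{\varepsilon,\delta}_t)(v^\varepsilon)'_t\rangle$. I apply Young's inequality, splitting as
$$\frac{2}{\sqrt{\delta\varepsilon}}|\langle y, Gv'\rangle|\leq\frac{\bar{\lambda}_1+\beta_1}{\delta}\|y\|^2 + \frac{1}{(\bar{\lambda}_1+\beta_1)\varepsilon}\|Gv'\|^2,$$
so that half the dissipation absorbs the linear factor in $\tilde{Y}$. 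For the remaining $\|G(\tilde{X},\tilde{Y})(v^\varepsilon)'\|^2$, the bounded inclusion $V_2 \hookrightarrow V$ (valid since $Q_2^{1/2}\in L_1(V)\subset L(V)$) combined with \textbf{(A6)} gives $\|G(\tilde{X}_t,\tilde{Y}_t)(v^\varepsilon)'_t\|^2 \leq C(1+\|\tilde{X}_t\|^2)\|(v^\varepsilon)'_t\|_2^2$. Taking expectations kills the $dW$ martingale, and I arrive at
\begin{align*}
\frac{d}{dt}\mathbb{E}\|\tilde{Y}^{\varepsilon,\delta}_t\|^2 \leq -\frac{\bar{\lambda}_1+\beta_1}{\delta}\mathbb{E}\|\tilde{Y}^{\varepsilon,\delta}_t\|^2 + \frac{C}{\delta}\mathbb{E}(1+\|\tilde{X}^{\varepsilon,\delta}_t\|^2) + \frac{C}{\varepsilon}\mathbb{E}\bigl[(1+\|\tilde{X}^{\varepsilon,\delta}_t\|^2)\|(v^\varepsilon)'_t\|_2^2\bigr].
\end{align*}

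Solving by the Gronwall lemma, integrating over $[0,T]$, and swapping the order of integration by Fubini, I obtain three pieces. The initial-condition piece contributes $T\|Y_0\|^2$. The $\frac{C}{\delta}$ piece gives, after using $\frac{1}{\delta}\int_0^t e^{-(\bar{\lambda}_1+\beta_1)(t-s)/\delta}ds \leq \frac{1}{\bar{\lambda}_1+\beta_1}$ and Lemma \ref{lem4.3}, a bound of order $CT$ uniform in $\delta$. The control piece, after Fubini in $(s,t)$, satisfies
$$\int_0^T\!\frac{C}{\varepsilon}\!\int_0^t\! e^{-(\bar{\lambda}_1+\beta_1)(t-s)/\delta}\mathbb{E}\bigl[(1+\|\tilde{X}^{\varepsilon,\delta}_s\|^2)\|(v^\varepsilon)'_s\|_2^2\bigr]ds\,dt \leq \frac{C\delta}{\varepsilon}\mathbb{E}\Bigl[\sup_{s\in[0,T]}(1+\|\tilde{X}^{\varepsilon,\delta}_s\|^2)\!\int_0^T\!\|(v^\varepsilon)'_s\|_2^2 ds\Bigr].$$
Since $(u^\varepsilon,v^\varepsilon)\in\mathcal{A}_b^N$ gives $\int_0^T\|(v^\varepsilon)'_s\|_2^2 ds \leq 2N$ almost surely, and Lemma \ref{lem4.3} bounds $\mathbb{E}\|\tilde{X}^{\varepsilon,\delta}\|_{\alpha,\infty}^2$ by a constant depending only on $T,N$, this whole quantity is at most $C\delta/\varepsilon$.

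The main obstacle is precisely this last estimate: the control shift by $v^\varepsilon/\sqrt{\varepsilon}$ introduces a dangerous $1/\varepsilon$ prefactor that would otherwise blow up. It is exactly Assumption \textbf{(A7)} ($\delta/\varepsilon \to 0$) that rescues the bound, since the dissipation rate $1/\delta$ of the fast equation integrates out to a factor $\delta$ that compensates the $1/\varepsilon$ from the control. This is the reason Regime 1 plays the distinguished role emphasized in Remark \ref{re3.2}, and is why the weaker assumption \textbf{(A6)} (boundedness of $G$ in $y$) suffices here without any additional boundedness hypothesis on the control coefficient.
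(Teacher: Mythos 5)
Your proposal is correct and takes essentially the same route as the paper's proof in Appendix \ref{secA.5}: It\^{o}'s formula for $\|\tilde{Y}^{\varepsilon,\delta}\|^2$, dissipativity from $A$ together with \textbf{(A4)}, Young's inequality to absorb the control term into the dissipation, a comparison/Gronwall step with the exponential kernel, Fubini, and finally Lemma \ref{lem4.3}, $(u^\varepsilon,v^\varepsilon)\in\mathcal{A}_b^N$ and \textbf{(A7)} to handle the $\delta/\varepsilon$ factor; the only cosmetic difference is your asymmetric Young split (weights $\frac{\bar\lambda_1+\beta_1}{\delta}$ and $\frac{C}{\varepsilon}$ instead of the paper's symmetric $\frac{1}{\sqrt{\delta\varepsilon}}$), which leaves a remainder of order $\delta/\varepsilon$ rather than $\sqrt{\delta/\varepsilon}$. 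One minor point: the paper justifies that the $\mathrm{d}W$-integral is a true martingale via the moment bounds of Lemmas \ref{lem4.3} and \ref{lem4.5}, which you assert without invoking Lemma \ref{lem4.5}.
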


The proof is reported in Appendix \ref{secA.5}.

\section{Proof of Main Theorem (Theorem \ref{th3.1})}\label{5}

In this section, we establish the LDP for the slow-fast system \eqref{eqn-1.2}. To this end, we first introduce a general criterion for the LDP in Appendix \ref{A}. 
The main results are then derived in Theorem \ref{th3.1} by verifying \textit{Conditions (1)} and \textit{(2)} in Lemma \ref{lemA.1}.

\begin{proof}[Proof of Condition (1) in Lemma \ref{lemA.1}]
	
For any fixed $N\in\mathbb{N}$,
let $\{(u^n,v^n)\}_{n\geq1}\subset\mathcal{S}_N$ be a sequence, and denote by $\{\bar{X}^{u^n}=\mathcal{G}^{0}(u^n,v^n)\}_{n\geq1}$ a sequence of elements in $C([0,T],V)$. Since $\mathcal{S}_N$ is a compact Polish space, the sequence $\{(u^n,v^n)\}_{n\geq1}$ has a subsequence that converges in the weak topology. Furthermore, we have  $\sup_{n\geq1}\|(u^n,v^n)\|_{\mathcal{H}}\leq\sqrt{2N}<\infty$. Hence, by calculating similar to Lemmas \ref{lem4.3} and \ref{lem4.4}, for $0\leq s\leq t\leq T$,  we obtain the estimates 
$\sup_{n\geq 1}\|\bar{X}^{u^n}\|_{\alpha,\infty}\leq C$ and
$	\sup_{n\geq 1}\|\bar{X}^{u^n}_t-\bar{X}^{u^n}_s\|^2
\leq C(t-s)^{2\beta}$,
where $C$ is a positive constant  depending only on $T$ and $N$.
%By Arzela-Ascoli's theorem and \cite[Corollary16.9]{Kallenberg2001}, the sequence $\{\bar{X}^{u^n}\}$ is relatively compact in $C([0,T],V)$.
Therefore, there exists a subsequence of $\{(u^n,v^n)\}_{n\geq1}$ (still labeled by $\{n\}$) and an element $(u,v)\in\mathcal{S}_N$ such that
\begin{itemize} 
	\item $(u^n,v^n)\to (u,v)$ in $\mathcal{S}_N$ weakly, as $n\to\infty$;
	%	\item $\bar{X}^{u^n}\to X$ in $C([0,T],V)$ strongly, as $n\to\infty$;
	\item $\sup_{n\geq 1}\sup_{t\in[0,T]}\|\bar{X}^{u^n}_t\|<\infty$.
\end{itemize}

Next, we will prove that  $\bar{X}^{u^n}\to \bar{X}^{u}$ in $C([0,T],V)$ as $n\to \infty$. By Fubini's Theorem and \eqref{eqn-2.6}, it follows that
\begin{align}
	\|\bar{X}^{u^n}_t-\bar{X}^u_t\|
	\leq&
	\|\int_0^tS_{t-s}\bar{b}(\bar{X}^{u^n}_s)ds-\int_0^tS_{t-s}\bar{b}(\bar{X}^u_s)ds\|\notag\\
	&+\|\int_0^tS_{t-s}g(\bar{X}^{u^n}_s)du^n_s-\int_0^tS_{t-s}g(\bar{X}^u_s)du_s\|\notag\\
%	\leq&C\int_0^t\|\bar{X}^{u^n}_s-\bar{X}^u_s\|ds\notag\\
%	&+\int_0^t\|\bar{X}^{u^n}_s-\bar{X}^u_s\|s^{H-\frac12}\int_0^s(s-r)^{H-\frac32}r^{\frac12-H}\|\dot{u}^n_r Q_1^{-\frac12}\|drds\notag\\
%	&+C\Big\|\int_0^t r^{\frac12-H} Q_1^{-\frac12}(\dot{u}^n_r-\dot{u}_r) \int_r^t S_{t-s}g(\bar{X}^u_s)Q_1^{\frac12}s^{H-\frac12}(s-r)^{H-\frac32}\mathrm{d}s\mathrm{d}r  \Big\|\notag\\
	\leq&\int_0^t\|\bar{X}^{u^n}_s-\bar{X}^u_s\|\Big(1+s^{H-\frac12}\int_0^s(s-r)^{H-\frac32}r^{\frac12-H}\|\dot{u}^n_r Q_1^{-\frac12}\|dr\Big)ds\notag\\
	&+C\Big\|\int_0^t r^{\frac12-H} Q_1^{-\frac12}(\dot{u}^n_r-\dot{u}_r) \int_r^t S_{t-s}g(\bar{X}^u_s)Q_1^{\frac12}s^{H-\frac12}(s-r)^{H-\frac32}\mathrm{d}s\mathrm{d}r  \Big\|\notag\\
	=:& \int_0^t\|\bar{X}^{u^n}_s-\bar{X}^u_s\|\Big(1+s^{H-\frac12}\int_0^s(s-r)^{H-\frac32}r^{\frac12-H}\|\dot{u}^n_r Q_1^{-\frac12}\|dr\Big)ds
	+P.\notag
\end{align}
	
It proceeds to estimate the term $P$. To this end, we first consider  $f^t(r):=\textbf{1}_{[0,t]}(r)\int_r^t S_{t-s}g(\bar{X}^u_s)Q_1^{\frac12}s^{H-\frac12}(s-r)^{H-\frac32}\mathrm{d}s$ as a function of $r$. From Assumption \textbf{(A3)}, we obtain the bound
$$\|f^t(r)\|\leq C (1+\|\bar{X}^u\|_\infty)\textbf{1}_{[0,t]}(r)\int_r^t s^{H-\frac12}(s-r)^{H-\frac32}\mathrm{d}s\leq C(1+\|\bar{X}^u\|_\infty).$$
This implies that the function $r\mapsto \underline{}r^{\frac12-H}f^t(r)$ belongs to $L^2([0,T],V)$. From the weak convergence of $(u^n,v^n)$ to $(u,v)$, we conclude that $\lim_{n\to\infty}P=0$ for any $t\in [0,T]$. Then, applying Gr\"{o}nwall's inequality, H\"{o}lder's inequality and Fubini's theorem, we have
\begin{align}
	\|\bar{X}^{u^n}_t-\bar{X}^u_t\|
	\leq& P\exp\Big(\int_0^t 1 ds+\int_0^ts^{H-\frac12}\int_0^s (s-r)^{H-\frac32}r^{\frac12-H}\|\dot{u}^n_r\|_1drds\Big) \notag\\
%	=& P \exp(t+\int_0^t r^{\frac12-H}\|\dot{u}^n_r\|_1 \int_0^t s^{H-\frac12}(s-r)^{H-\frac32}dsdr ) \notag\\
	\leq &C P\exp\Big( 1+\int_0^t r^{\frac12-H}\|\dot{u}^n_r\|_1dr\Big)\notag\\
	\leq &C P\exp \Big(1+\Big(\int_0^tr^{1-2H}dr\Big)^{\frac12}\Big(\int_0^t\|\dot{u}^n_r\|_1^2dr\Big)^{\frac12}\Big)\notag\\
	\leq & CP \to 0,\quad n\to\infty,\notag
\end{align}
where $C$ is a positive constant which only  depends on $T$ and $N$.

Thus, by a standard subsequential argument, we see that the full sequence $\{\bar{X}^{u^n}=\mathcal{G}^{0}(u^n,v^n)\}_{n\geq 1}$ converges to $\bar{X}^{u}=\mathcal{G}^{0}(u,v)$ in $C([0,T],V)$. This implies that $\Gamma_N:=\{\mathcal{G}^{0}(u,v):(u,v)\in\mathcal{S}_N\}$ is compact in $C([0,T],V)$. 
%The proof is completed.
\end{proof}

\begin{proof}[Proof of \textit{Condition (2)} in Lemma \ref{lemA.1}]
Before proving the sequence $\{\tilde{X}^{\varepsilon, \delta}\}_{\varepsilon\in(0,1)}$ weakly converges to $\bar{X}^u$, we construct the auxiliary processes.	Taking any sequence $\{(u^\varepsilon,v^\varepsilon)\}_{\varepsilon\in(0,1)}\subset \mathcal{A}_b^N$ such that $\{(u^\varepsilon,v^\varepsilon)\}_{\varepsilon\in(0,1)}$ converges to $(u,v)\in\mathcal{S}_N$ in distribution as $\varepsilon\to 0$, the controlled system are defined by the following SPDEs,
$$
\begin{cases}
	\mathrm{d}\hat{X}^{\varepsilon,\delta}_t=(A\hat{X}^{\varepsilon,\delta}_t+b(\tilde{X}^{\varepsilon,\delta}_{t(\Delta)},\hat{Y}^{\varepsilon,\delta}_t))\mathrm{d}t+g(\hat{X}^{\varepsilon,\delta}_t)\mathrm{d}u^\varepsilon_t,\\  
	\mathrm{d}\hat{Y}^{\varepsilon,\delta}_t=\frac1\delta(A\hat{Y}^{\varepsilon,\delta}_t+F(\tilde{X}^{\varepsilon,\delta}_{t(\Delta)},\hat{Y}^{\varepsilon,\delta}_t)\mathrm{d}t
	+\frac{1}{\sqrt\delta} G(\tilde{X}^{\varepsilon,\delta}_{t(\Delta)},\hat{Y}^{\varepsilon,\delta}_t)\mathrm{d}W_t,\\
	\hat{X}^{\varepsilon,\delta}_0=X_0,~\hat{Y}^{\varepsilon,\delta}_0=Y_0,~t\in[0,T],
\end{cases}
$$
where $t(\Delta)=\lfloor\frac t \Delta\rfloor\Delta$ is the nearest breakpoint preceding $t$ with $t\in[0,T]$. Without loss of generality, we assume $\Delta<1$.

We derive the following error estimate between the process $\tilde{Y}^{\varepsilon,\delta}$ and $\hat{Y}^{\varepsilon,\delta}$.

\begin{lemma}\label{lem5.2}
	There exists a constant $C>0$ such that for any $x,y\in V$ and $\varepsilon,\delta\in(0,1)$, we have
	$$
	\int_0^T\mathbb{E}\big[\|\tilde{Y}^{\varepsilon,\delta}_t-\hat{Y}^{\varepsilon,\delta}_t\|^2 \big]\mathrm{d}t\leq C\Delta+C\frac\delta\varepsilon.
	$$
	Here, $C$ is a  constant which only depends  on $T$ and $N$.
\end{lemma}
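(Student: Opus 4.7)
Write $Z_t := \tilde{Y}^{\varepsilon,\delta}_t - \hat{Y}^{\varepsilon,\delta}_t$, which satisfies $Z_0 = 0$ and
\begin{align*}
\mathrm{d}Z_t = \;&\tfrac{1}{\delta}\bigl(AZ_t + F(\tilde{X}^{\varepsilon,\delta}_t,\tilde{Y}^{\varepsilon,\delta}_t) - F(\tilde{X}^{\varepsilon,\delta}_{t(\Delta)},\hat{Y}^{\varepsilon,\delta}_t)\bigr)\mathrm{d}t \\
&+ \tfrac{1}{\sqrt{\delta\varepsilon}}G(\tilde{X}^{\varepsilon,\delta}_t,\tilde{Y}^{\varepsilon,\delta}_t)\mathrm{d}v^\varepsilon_t + \tfrac{1}{\sqrt{\delta}}\bigl(G(\tilde{X}^{\varepsilon,\delta}_t,\tilde{Y}^{\varepsilon,\delta}_t)-G(\tilde{X}^{\varepsilon,\delta}_{t(\Delta)},\hat{Y}^{\varepsilon,\delta}_t)\bigr)\mathrm{d}W_t.
\end{align*}
The plan is to apply It\^{o}'s formula to $\|Z_t\|^2$ (rigorously through a Yosida approximation of $A$ and passing to the limit) and exploit the strong dissipativity built into Assumptions \textbf{(A4)}--\textbf{(A5)}. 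The spectral bound gives $2\langle AZ_t,Z_t\rangle\le -2\bar\lambda_1\|Z_t\|^2$; the monotonicity of $F$ from \textbf{(A4)} gives
$2\langle Z_t, F(\tilde{X}^{\varepsilon,\delta}_t,\tilde{Y}^{\varepsilon,\delta}_t) - F(\tilde{X}^{\varepsilon,\delta}_{t(\Delta)},\hat{Y}^{\varepsilon,\delta}_t)\rangle\le 2\beta_3\|Z_t\|^2 + 2C_5\|\tilde{X}^{\varepsilon,\delta}_t-\tilde{X}^{\varepsilon,\delta}_{t(\Delta)}\|^2$; the Lipschitz bound on $G$ from \textbf{(A1)} controls the It\^{o} correction by $2C_2^2(\|Z_t\|^2+\|\tilde{X}^{\varepsilon,\delta}_t-\tilde{X}^{\varepsilon,\delta}_{t(\Delta)}\|^2)$. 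Choosing the implicit constants so that Assumption \textbf{(A5)} leaves a strictly positive coefficient $c>0$ in front of $-\|Z_t\|^2/\delta$ is what makes the scheme work.

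The cross term involving $\mathrm{d}v^\varepsilon$ is the crux. Using that $v^\varepsilon=\int_0^\cdot(v^\varepsilon)'_s\mathrm{d}s$ with $(v^\varepsilon)'\in L^2([0,T],V_2)$, I estimate
$\tfrac{2}{\sqrt{\delta\varepsilon}}\|Z_s\|\,\|G(\tilde{X}^{\varepsilon,\delta}_s,\tilde{Y}^{\varepsilon,\delta}_s)(v^\varepsilon)'_s\|$
by Young's inequality in the form $\tfrac{c}{2\delta}\|Z_s\|^2+\tfrac{C}{\varepsilon}\|G(\tilde{X}^{\varepsilon,\delta}_s,\tilde{Y}^{\varepsilon,\delta}_s)(v^\varepsilon)'_s\|^2$, so that half of the dissipation is absorbed into the left-hand side. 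The key point for producing the $\delta/\varepsilon$ rate is that Assumption \textbf{(A6)} removes the $\tilde Y$-dependence in the growth of $G$: $\|G(\tilde{X}^{\varepsilon,\delta}_s,\tilde{Y}^{\varepsilon,\delta}_s)(v^\varepsilon)'_s\|\le C(1+\|\tilde{X}^{\varepsilon,\delta}_s\|)\|(v^\varepsilon)'_s\|_2$, which decouples the control bound from the (only $L^2$-integrable in time) moments of $\tilde Y$.

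After taking expectation, the stochastic integral against $W$ is a martingale, so with $\varphi(t):=\mathbb{E}[\|Z_t\|^2]$ one arrives at a Gronwall inequality of the form
\begin{equation*}
\varphi'(t)\le -\tfrac{c}{\delta}\varphi(t)+\tfrac{C}{\delta}\mathbb{E}\|\tilde{X}^{\varepsilon,\delta}_t-\tilde{X}^{\varepsilon,\delta}_{t(\Delta)}\|^2 + \tfrac{C}{\varepsilon}\mathbb{E}\bigl[(1+\|\tilde{X}^{\varepsilon,\delta}_t\|^2)\|(v^\varepsilon)'_t\|_2^2\bigr].
\end{equation*}
Variation of parameters yields $\varphi(t)\le\int_0^t e^{-c(t-s)/\delta}(\cdots)\mathrm{d}s$. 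Integrating in $t$ over $[0,T]$ and swapping the order of integration produces the factor $\int_s^T e^{-c(t-s)/\delta}\mathrm{d}t\le \delta/c$, which cancels the $1/\delta$ in the first source term (leaving a clean $\int_0^T\mathbb{E}\|\tilde{X}^{\varepsilon,\delta}_s-\tilde{X}^{\varepsilon,\delta}_{s(\Delta)}\|^2\mathrm{d}s$) and converts the $1/\varepsilon$ in the second source term into $\delta/\varepsilon$. The first piece is bounded by $C\Delta^{2\beta}\le C\Delta$ via Lemma \ref{lem4.4}, while the second piece is bounded using Lemma \ref{lem4.3} to control $\|\tilde X^{\varepsilon,\delta}\|_\infty$ and the defining constraint $\int_0^T\|(v^\varepsilon)'_s\|_2^2\mathrm{d}s\le 2N$ for $(u^\varepsilon,v^\varepsilon)\in\mathcal{A}_b^N$, giving $C\delta/\varepsilon$.

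The main obstacle is the delicate balancing in the Young step: the $v^\varepsilon$-term carries the singular factor $1/\sqrt{\delta\varepsilon}$, and one must split it so that (i) exactly the right portion of dissipation is consumed and (ii) the residual $\|G(v^\varepsilon)'\|^2$ becomes integrable in time without requiring moments of $\tilde Y$ beyond what is known from Lemmas \ref{lem4.5}--\ref{lem4.6}. Assumption \textbf{(A6)} is precisely what makes this work; together with the exponential ergodicity induced by \textbf{(A5)}, it is what produces the clean $\Delta + \delta/\varepsilon$ separation of the two sources of error.
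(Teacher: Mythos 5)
Your proposal is correct and follows essentially the same route as the paper: Itô's formula applied to $\|\tilde{Y}^{\varepsilon,\delta}_t-\hat{Y}^{\varepsilon,\delta}_t\|^2$, dissipativity from $A$ and Assumptions \textbf{(A4)}--\textbf{(A5)}, Young's inequality combined with \textbf{(A6)} to split the singular $1/\sqrt{\delta\varepsilon}$ control term into absorbed dissipation plus a $C\varepsilon^{-1}(1+\|\tilde{X}^{\varepsilon,\delta}\|^2)\|(v^\varepsilon)'\|_2^2$ source, then the comparison theorem, Fubini, and Lemmas \ref{lem4.3}--\ref{lem4.4} plus the $\mathcal{A}_b^N$ constraint to obtain $C\Delta^{2\beta}+C\delta/\varepsilon\le C\Delta+C\delta/\varepsilon$. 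The only differences are cosmetic (explicit mention of a Yosida approximation and slightly different constant bookkeeping), not substantive.
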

\begin{proof}
	Let $J_t:=\tilde{Y}^{\varepsilon,\delta}_t-\hat{Y}^{\varepsilon,\delta}_t$. Applying It\^{o}'s formula to $\|J_t\|^2$ and taking Assumptions \textbf{(A1)}-\textbf{(A7)} into account, we obtain
\begin{align}
	\frac{\mathrm{d}\|J_t\|^2}{\mathrm{d}t}=&\frac2\delta \left\langle  J_t,A J_t    \right\rangle  
	+\frac2\delta \big\langle  J_t, F(\tilde{X}^{\varepsilon,\delta}_t,\tilde{Y}^{\varepsilon,\delta}_t) - F(\tilde{X}^{\varepsilon,\delta}_{t(\Delta)},\hat{Y}^{\varepsilon,\delta}_t) \big\rangle \notag\\
	&+\frac{2}{\sqrt{\delta\varepsilon}}  \Big\langle  J_t,  G(\tilde{X}^{\varepsilon,\delta}_t,\tilde{Y}^{\varepsilon,\delta}_t) \frac{\mathrm{d}v^\varepsilon_t}{\mathrm{d}t} \Big\rangle  
	+\frac1\delta \big\|   G(\tilde{X}^{\varepsilon,\delta}_t,\tilde{Y}^{\varepsilon,\delta}_t) - G(\tilde{X}^{\varepsilon,\delta}_{t(\Delta)},\hat{Y}^{\varepsilon,\delta}_t)    \big\|^2_{HS}   \notag\\
	&+ \frac{2}{\sqrt{\delta}}  \Big\langle  J_t,  \big(G(\tilde{X}^{\varepsilon,\delta}_t,\tilde{Y}^{\varepsilon,\delta}_t) - G(\tilde{X}^{\varepsilon,\delta}_{t(\Delta)},\hat{Y}^{\varepsilon,\delta}_t)\big) \frac{\mathrm{d}W_t}{\mathrm{d}t}   \Big\rangle \notag \\ 
	\leq&\frac{-(2\bar\lambda_1-2\beta_3-C_2-1)}{\delta}\|J_t\|^2
	+\frac{C}{\delta}\| \tilde{X}^{\varepsilon,\delta}_t- \tilde{X}^{\varepsilon,\delta}_{t(\Delta)} \|^2
	\notag\\
	&+\frac{C}{\varepsilon}\big(1+\|\tilde{X}^{\varepsilon,\delta}_t\|^2\big)\Big\|\frac{\mathrm{d}v^\varepsilon_t}{\mathrm{d}t}\Big\|^2_2
	+\frac{2}{\sqrt\delta} \Big\langle  J_t,  \big(G(\tilde{X}^{\varepsilon,\delta}_t,\tilde{Y}^{\varepsilon,\delta}_t) - G(\tilde{X}^{\varepsilon,\delta}_{t(\Delta)},\hat{Y}^{\varepsilon,\delta}_t)\big) \frac{\mathrm{d}W_t}{\mathrm{d}t}   \Big\rangle  ,\notag
\end{align}
where we use 
%Burkholder?Davis?Gundy's inequality in the first step and 
Young's inequality in the last step. According to Lemmas \ref{lem4.3} and \ref{lem4.5}, we obtain that the fourth term of the final inequality is a true martingale. In particular, we have $\mathbb{E}[\int_0^t \langle \tilde{Y}^{\varepsilon,\delta}_s,G(\tilde{X}^{\varepsilon,\delta}_s,\tilde{Y}^{\varepsilon,\delta}_t) - G(\tilde{X}^{\varepsilon,\delta}_{s(\Delta)},\hat{Y}^{\varepsilon,\delta}_s) \rangle \mathrm{d}W_s]=0$.
From Assumption \textbf{(A5)}, we have $\eta=2\bar\lambda_1-2\beta_3-C_2>1$. By the comparison theorem, we have
\begin{align*}
	\mathbb{E}\big[\|J_t\|^2\big]\leq& \frac{C}{\delta}\int_0^t\mathbb{E}\big[ \| \tilde{X}^{\varepsilon,\delta}_s- \tilde{X}^{\varepsilon,\delta}_{s(\Delta)} \|^2 \big]  e^{\frac{1-\eta}{\delta}(t-s)}\mathrm{d}s\\
	&+\frac{C}{\varepsilon}\int_0^t  \mathbb{E}\bigg[\left(1+\|\tilde{X}^{\varepsilon,\delta}_s\|^2\right)\left\|\frac{\mathrm{d}v
	^\varepsilon_s}{\mathrm{d}s}\right\|^2_2\bigg] e^{\frac{1-\eta}{\delta}(t-s)}\mathrm{d}s.
\end{align*}
Then, from Fubini's theorem, $(u^\varepsilon,v^\varepsilon)\in\mathcal{A}_b^N$, Lemmas \ref{lem4.3} and \ref{lem4.4}, we have
\begin{align*}
	\int_0^T\mathbb{E}\left[\|J_t\|^2\right]\mathrm{d}t 
	\leq& \frac{C}{\delta}\int_0^T\int_0^t\mathbb{E}\big[ \| \tilde{X}^{\varepsilon,\delta}_s- \tilde{X}^{\varepsilon,\delta}_{s(\Delta)} \|^2 \big]  e^{\frac{1-\eta}{\delta}(t-s)}\mathrm{d}s\mathrm{d}t\notag\\
	&+\frac{C}{\varepsilon}\int_0^T\int_0^t  \mathbb{E}\bigg[\left(1+\|\tilde{X}^{\varepsilon,\delta}_s\|^2\right)\left\|\frac{\mathrm{d}v
	^\varepsilon_s}{\mathrm{d}s}\right\|^2_2\bigg] e^{\frac{1-\eta}{\delta}(t-s)}\mathrm{d}s\mathrm{d}t\notag\\
	%	=&\frac{C}{\delta}\int_0^T   \mathbb{E}\big[ \| \tilde{X}^{\varepsilon,\delta}_s- \tilde{X}^{\varepsilon,\delta}_{s(\Delta)} \|^2 \big]  \int_s^T   e^{\frac{-(\eta-1)(t-s)}{\delta}}\mathrm{d}t\mathrm{d}s\notag\\
	%	&+\frac{C}{\varepsilon}\int_0^T   \mathbb{E}\bigg[\left(1+\|\tilde{X}^{\varepsilon,\delta}_s\|^2\right)\left\|\frac{\mathrm{d}v
	%		^\varepsilon_s}{\mathrm{d}s}\right\|^2_2\bigg]   \int_s^T   e^{\frac{-(\eta-1)(t-s)}{\delta}}\mathrm{d}t\mathrm{d}s\notag\\
	\leq& \frac C{\eta-1} \int_0^T \mathbb{E}\big[ \| \tilde{X}^{\varepsilon,\delta}_s- \tilde{X}^{\varepsilon,\delta}_{s(\Delta)} \|^2 \big] \mathrm{d}s\\
	&+\frac {C\delta}{(\eta-1)\varepsilon}   \mathbb{E}\bigg[\Big(1+\sup_{t\in[0,T]}\|\tilde{X}^{\varepsilon,\delta}_t\|^2\Big)\int_0^T \left\| \frac{\mathrm{d}v^\varepsilon_s}{\mathrm{d}s}\right\|^2_2  \mathrm{d}s\bigg]  \notag\\
	%	\leq& C\Delta^{2\beta}+C\frac\delta\varepsilon\mathbb{E}\Big[\Big(1+\sup_{t\in[0,T]}\|\tilde{X}^{\varepsilon,\delta}_t\|^2\Big)\Big]\notag\\
	\leq& C\Delta^{2\beta}+C\frac\delta\varepsilon,\notag
\end{align*}
where $\beta\in\left(\frac12, 1-\alpha\right)$. Supposing that $\Delta$ is small enough, we have
$$	\int_0^T\mathbb{E}\left[\|J_t\|^2\right]\mathrm{d}t \leq C\Delta+C\frac\delta\varepsilon .$$  
The proof is completed.
\end{proof}
By taking the same manner in Lemmas \ref{lem4.3} and \ref{lem4.6}, for any $p\geq 1$, we have 
\begin{equation}\label{eqn-4.21}
	\mathbb{E}\big[\|\hat{X}^{\varepsilon,\delta}\|_{\alpha,\infty}^p\big]\leq C,\quad
	\int_0^T \mathbb{E}\big[\|\hat{Y}^{\varepsilon,\delta}_t\|^2\big]\mathrm{d}t \leq C,
\end{equation}
where the positive constant $C$ is independent of $\varepsilon$, $\delta$ and $\Delta$.

Let $R>0$ be large enough, we define the stopping time 
$\tau_R:=\inf\{t\geq0: \Lambda^{0,T}_{\alpha,B^H}\geq R\}\wedge T.$ 
Then, we set $A_{R,T}:=\{\Lambda^{0,T}_{\alpha,B^H}\leq  R\}$.

	%Combining the following three propositions, we prove the convergence of controlled sequence $\tilde{X}^{\varepsilon,\delta}$ to averaged process $\bar{X}^u$. This finally proves Condition $(1)$ in Lemma \ref{lemA.1}, so that the large derivation principle in the main result Theorem \ref{th3.1} is obtained.

	Next, we show the proof of \textit{Condition (2)} in Lemma \ref{lemA.1}.
	
	\textbf{Step 1.} For any fixed $N\in\mathbb{N}$, let $(u^\varepsilon,v^\varepsilon)\in \mathcal{A}_b^N$. 
	In this step, we aim to estimate the term $\mathbb{E}\big[ \|\tilde{X}^{\varepsilon,\delta}- \hat{X}^{\varepsilon,\delta}\|^2_{\alpha,T}  \big]$.

	It is easy to see that
	\begin{align}
		\mathbb{E}\left[ \|\tilde{X}^{\varepsilon,\delta}- \hat{X}^{\varepsilon,\delta}\|^2_{\alpha,T}  \right]
		\leq& \mathbb{E}\left[ \|\tilde{X}^{\varepsilon,\delta}- \hat{X}^{\varepsilon,\delta}\|^2_{\alpha,T} \textbf{1}_{\{\tau_R< T\}} \right]\notag\\
		&+\mathbb{E}\left[ \|\tilde{X}^{\varepsilon,\delta}- \hat{X}^{\varepsilon,\delta}\|^2_{\alpha,T} \textbf{1}_{\{\tau_R\geq T\}} \right].  \label{eqn-4.22}
	\end{align}
	For the first term of \eqref{eqn-4.22}, Applying  H\"{o}lder's inequality and Markov's inequality, we indicate that
	\begin{align}
		\mathbb{E}\left[ \|\tilde{X}^{\varepsilon,\delta}- \hat{X}^{\varepsilon,\delta}\|^2_{\alpha,T} \textbf{1}_{\{\tau_R< T\}} \right]
		\leq&  \mathbb{E}\left[ \|\tilde{X}^{\varepsilon,\delta}- \hat{X}^{\varepsilon,\delta}\|^4_{\alpha,T}  \right]^\frac12  \mathbb{P}\left(\tau_R<T \right)^\frac12 \notag\\
		%	\leq& \mathbb{E}\left[ \|\tilde{X}^{\varepsilon,\delta}- \hat{X}^{\varepsilon,\delta}\|^4_{\alpha,T}  \right]^\frac12  \mathbb{P}\left(\Lambda^{0,T}_{\alpha,B^H}> R \right)^\frac12 \notag\\
		%	\leq& \mathbb{E}\left[ \|\tilde{X}^{\varepsilon,\delta}- \hat{X}^{\varepsilon,\delta}\|^4_{\alpha,T}  \right]^\frac12  R^{-\frac12}\sqrt{\mathbb{E}\big[\Lambda^{0,T}_{\alpha,B^H}\big]}\notag\\
		\leq& C R^{-\frac12}\sqrt{\mathbb{E}\big[\Lambda^{0,T}_{\alpha,B^H}\big]},  \label{eqn-4.23}
	\end{align}
	where the final inequality comes from  \eqref{eqn-4.21} and Lemma \ref{lem4.3}.
	
	We now compute the second term of 
	\eqref{eqn-4.22}. It is easy to see that,
	\begin{align}
		&\mathbb{E}\big[ \|\tilde{X}^{\varepsilon,\delta}- \hat{X}^{\varepsilon,\delta}\|^2_{\alpha,T} \textbf{1}_{\{\tau_R\geq T\}} \big]\notag\\
		\leq& C \mathbb{E}\bigg[\Big\| \int_0^\cdot S_{\cdot-s}\left(b(\tilde{X}^{\varepsilon,\delta}_s,\tilde{Y}^{\varepsilon,\delta}_s)-b(\tilde{X}^{\varepsilon,\delta}_{s(\Delta)},\hat{Y}^{\varepsilon,\delta}_s)\right)\mathrm{d}s    \Big\|^2_{\alpha,T} \textbf{1}_{A_{R,T}}   \bigg]\notag\\
		&+C \mathbb{E}\bigg[\Big\| \int_0^\cdot S_{\cdot-s}\left(g(\tilde{X}^{\varepsilon,\delta}_s)-g(\hat{X}^{\varepsilon,\delta}_s)\right)\mathrm{d}u^\varepsilon_s   \Big\|^2_{\alpha,T} \textbf{1}_{A_{R,T}}  \bigg]\notag\\
		&+\varepsilon C \mathbb{E}\bigg[\Big\| \int_0^\cdot S_{\cdot-s}g(\tilde{X}^{\varepsilon,\delta}_s) dB^H_s   \Big\|^2_{\alpha,T} \textbf{1}_{A_{R,T}}   \bigg]  \notag\\
		=:& M_1+M_2+M_3.\label{eqn-4.24}
	\end{align}
	Before computing $M_i$, $i=1,2,3$, we estimate $A_3:=\left\| \int_0^\cdot S_{\cdot-s} f\mathrm{d}s \right\|^2_{\alpha,T}$, where $f:[0,T]\to V$ is a measurable function. According to Fubini's theorem and H\"{o}lder's inequality, for any $\gamma\in(\frac12,1-\alpha)$, we conclude that
	\begin{align}
		A_3\leq& \sup_{t\in[0,T]}\bigg(\Big\| \int_0^t S_{t-r} f(r) \mathrm{d}r \Big\|  +\int_0^t \frac{\|\int_0^tS_{t-r}f(r)\mathrm{d}r-\int_0^sS_{s-r}f(r)\mathrm{d}r\|}{(t-s)^{\alpha+1}}\mathrm{d}s  \bigg)^2 \notag\\
		%	\leq& C\sup_{t\in[0,T]} \bigg(  \int_{0}^{t}\|f(r)\| \mathrm{d}r\notag\\   &+\int_0^t(t-s)^{-\alpha-1} \Big(\int_s^t \|f(r)\|\mathrm{d}r+\int_0^s(t-s)^{\gamma}(s-r)^{-\gamma}\|f(r)\|\mathrm{d}r \Big)  \mathrm{d}s  \bigg)^2 \notag\\
		% 	\leq& C\sup_{t\in[0,T]}\bigg(\int_0^t\|f(r)\|\notag\\
		% 	&\times  \Big(1+ \int_0^r (t-s)^{-\alpha-1}\mathrm{d}s +\int_r^t (t-s)^{\gamma-\alpha-1}(s-r)^{-\gamma}\mathrm{d}s \Big) \mathrm{d}r\bigg)^2  \notag\\
		\leq& C \sup_{t\in[0,T]} \Big(\int_0^t(t-r)^{-\alpha}\|f(r)\|\mathrm{d}r\Big)^2  \notag\\
		\leq& C\int_0^T \|f(r)\|^2 \mathrm{d}s	,
		\label{eqn-43}
	\end{align}
	where we use Lemma \ref{lem2.1} in the second inequality. 
	Then, from Assumption \textbf{(A1)},  \eqref{eqn-4.25}, Lemmas \ref{lem4.4} and \ref{lem5.2}, we have
	\begin{align}
		M_1
		%	\leq& C\mathbb{E}\bigg[\Big\| \int_0^\cdot  S_{\cdot-s}\left(b(\tilde{X}^{\varepsilon,\delta}_s,\tilde{Y}^{\varepsilon,\delta}_s)-b(\tilde{X}^{\varepsilon,\delta}_{s(\Delta)},\tilde{Y}^{\varepsilon,\delta}_s)\right) \mathrm{d}s   \Big\|^2_{\alpha,T}  \textbf{1}_{A_{R,T}} \Big]\notag\\
		%	&+  C\mathbb{E}\bigg[\Big\| \int_0^\cdot  S_{\cdot-s}\left(b(\tilde{X}^{\varepsilon,\delta}_{s(\Delta)},\tilde{Y}^{\varepsilon,\delta}_s)-b(\tilde{X}^{\varepsilon,\delta}_{s(\Delta)},\hat{Y}^{\varepsilon,\delta}_s)\right) \mathrm{d}s   \Big\|^2_{\alpha,T}  \textbf{1}_{A_{R,T}} \bigg]\notag\\
		\leq& C \int_0^T\mathbb{E}\Big[\|\tilde{X}^{\varepsilon,\delta}_s-\tilde{X}^{\varepsilon,\delta}_{s(\Delta)}\|^2 \textbf{1}_{A_{R,T}}+\|\tilde{Y}^{\varepsilon,\delta}_s-\hat{Y}^{\varepsilon,\delta}_s\|^2 \textbf{1}_{A_{R,T}} \Big]\mathrm{d}s
		\leq C\Delta+C\frac\delta\varepsilon.
		\label{eqn-4.26}
	\end{align}
	
	For the term $M_2$, we have 
	\begin{align}
		M_2
		\leq& C \mathbb{E}\bigg[ \int_0^T \Big( \int_0^t (t-s)^{-\alpha-1}\Big\|\int_s^tS_{t-r}(g(\tilde{X}^{\varepsilon,\delta}_r)-g(\hat{X}^{\varepsilon,\delta}_r))\mathrm{d}u^\varepsilon_r\Big\|\mathrm{d}s  \Big)^2\mathrm{d}t \textbf{1}_{A_{R,T}}  \bigg]\notag\\
		&+	C \mathbb{E}\bigg[ \int_0^T \Big( \int_0^t \frac
		{\|\int_0^s (S_{t-r}-S_{s-r})(g(\tilde{X}^{\varepsilon,\delta}_r)-g(\hat{X}^{\varepsilon,\delta}_r))\mathrm{d}u^\varepsilon_r\|}{(t-s)^{\alpha+1}}\mathrm{d}s  \Big)^2\mathrm{d}t \textbf{1}_{A_{R,T}}  \bigg]\notag\\
		&+C \mathbb{E}\bigg[ \sup_{t\in[0,T]} \Big\| \int_0^t S_{t-s} (g(\tilde{X}^{\varepsilon,\delta}_s)-g(\hat{X}^{\varepsilon,\delta}_s))  \mathrm{d}u^\varepsilon_s\Big\|^2 \textbf{1}_{A_{R,T}}  \bigg]\notag\\
		=:& M_{21}+M_{22}+M_{23}.
		\label{eqn-4.27}
	\end{align}
	From Fubini's theorem and the fact that  $(u^\varepsilon,v^\varepsilon)\in \mathcal{A}_b^N$, we have
	\begin{align}
		M_{21}
		\leq& C\int_0^T\mathbb{E}\bigg[\bigg( \int_0^t 
		\int_s^t \Big(  
		\frac{(r-s)^{-\alpha}+(t-r)^{-\alpha}}{(t-s)^{\alpha+1}}\|\tilde{X}^{\varepsilon,\delta}_r-\hat{X}^{\varepsilon,\delta}_r\|\notag\\
		&+ \int_s^r\frac{\sup_{i\in \mathbb{N}}\big\|\big(g(\tilde{X}^{\varepsilon,\delta}_r)-g(\hat{X}^{\varepsilon,\delta}_r)-g(\tilde{X}^{\varepsilon,\delta}_q)+g(\hat{X}^{\varepsilon,\delta}_q)\big)e_i\big\|}{(t-s)^{\alpha+1}(r-q)^{\alpha+1}}\mathrm{d}q
		\Big)
		\mathrm{d}r\mathrm{d}s\bigg)^2 \textbf{1}_{A_{R,t}} \bigg] \mathrm{d}t \notag\\
		\leq& C\int_0^T\mathbb{E}\bigg[ \bigg(\int_0^t(t-r)^{-2\alpha}\|\tilde{X}^{\varepsilon,\delta}_r-\hat{X}^{\varepsilon,\delta}_r\|\mathrm{d}r\notag\\
		&+  \int_0^t(t-s)^{-\alpha-1}\int_s^t \int_s^r \frac{\|\tilde{X}^{\varepsilon,\delta}_r-\hat{X}^{\varepsilon,\delta}_r-\tilde{X}^{\varepsilon,\delta}_q+\hat{X}^{\varepsilon,\delta}_q\|}{(r-q)^{\alpha+1}}\mathrm{d}q\mathrm{d}r\mathrm{d}s   \notag\\
		&+ \int_0^t(t-s)^{-\alpha-1}\int_s^t\|\tilde{X}^{\varepsilon,\delta}_r-\hat{X}^{\varepsilon,\delta}_r\|\int_s^r\frac{\|\tilde{X}^{\varepsilon,\delta}_r-\tilde{X}^{\varepsilon,\delta}_q\|}{(r-q)^{\alpha+1}}\mathrm{d}q \mathrm{d}r\mathrm{d}s \notag\\
		&+ \int_0^t(t-s)^{-\alpha-1}\int_s^t\|\tilde{X}^{\varepsilon,\delta}_r-\hat{X}^{\varepsilon,\delta}_r\|\int_s^r\frac{\|\hat{X}^{\varepsilon,\delta}_r-\hat{X}^{\varepsilon,\delta}_q\|}{(r-q)^{\alpha+1}}\mathrm{d}q \mathrm{d}r\mathrm{d}s\bigg)^2  \textbf{1}_{A_{R,t}}\bigg]\mathrm{d}t\notag\\
		\leq&C\int_0^T\mathbb{E}\bigg[\bigg(   \int_0^t\bigg(\left((t-r)^{-2\alpha}+(t-r)^{-\alpha}\right)\|\tilde{X}^{\varepsilon,\delta}_r-\hat{X}^{\varepsilon,\delta}_r\|\notag\\
		&\times \left(1+\Delta(\tilde{X}^{\varepsilon,\delta}_r)+\Delta(\hat{X}^{\varepsilon,\delta}_r)\right)\notag\\
		&+(t-r)^{-\alpha}\int_0^r \frac{\|\tilde{X}^{\varepsilon,\delta}_r-\hat{X}^{\varepsilon,\delta}_r-\tilde{X}^{\varepsilon,\delta}_q+\hat{X}^{\varepsilon,\delta}_q\|}{(r-q)^{\alpha+1}}\mathrm{d}q \bigg)  \mathrm{d}r\textbf{1}_{A_{R,t}}\bigg)^2\bigg]\mathrm{d}t\notag\\
		%	\leq& C\int_0^T\mathbb{E}\bigg[\Big(   \int_0^t\left((t-r)^{-2\alpha}+(t-r)^{-\alpha}\right)\|\tilde{X}^{\varepsilon,\delta}_r-\hat{X}^{\varepsilon,\delta}_r\|\mathrm{d}r \textbf{1}_{A_{R,t}}\Big)^2 \notag\\
		%	&+\Big(\int_0^t(t-r)^{-\alpha}\int_0^r \frac{\|\tilde{X}^{\varepsilon,\delta}_r-\hat{X}^{\varepsilon,\delta}_r-\tilde{X}^{\varepsilon,\delta}_q+\hat{X}^{\varepsilon,\delta}_q\|}{(r-q)^{\alpha+1}}\mathrm{d}q   \mathrm{d}r\textbf{1}_{A_{R,t}}\Big)^2\bigg]\mathrm{d}t\notag\\
		\leq& C\int_0^T \mathbb{E}\bigg[t^{-2\alpha+1}\int_0^t\left((t-r)^{-\alpha}+1\right)^2  \|\tilde{X}^{\varepsilon,\delta}_r-\hat{X}^{\varepsilon,\delta}_r\|^2\mathrm{d}r \textbf{1}_{A_{R,t}}\notag\\
		&+ t^{-2\alpha+1}\int_0^t\Big(\int_0^r \frac{\|\tilde{X}^{\varepsilon,\delta}_r-\hat{X}^{\varepsilon,\delta}_r-\tilde{X}^{\varepsilon,\delta}_q+\hat{X}^{\varepsilon,\delta}_q\|}{(r-q)^{\alpha+1}}\mathrm{d}q\Big)^2   \mathrm{d}r\textbf{1}_{A_{R,t}}
		\bigg]\mathrm{d}t\notag\\
%		\leq& C\int_0^T \mathbb{E}\bigg[\sup_{0\leq s\leq t}\|\tilde{X}^{\varepsilon,\delta}_s-\hat{X}^{\varepsilon,\delta}_s\|^2  \textbf{1}_{A_{R,t}}\notag\\
%		&+ \int_0^t\Big(\int_0^r \frac{\|\tilde{X}^{\varepsilon,\delta}_r-\hat{X}^{\varepsilon,\delta}_r-\tilde{X}^{\varepsilon,\delta}_q+\hat{X}^{\varepsilon,\delta}_q\|}{(r-q)^{\alpha+1}}\mathrm{d}q\Big)^2   \mathrm{d}r\textbf{1}_{A_{R,t}}
%		\bigg]\mathrm{d}t\notag\\
		\leq&C\int_0^T\mathbb{E}\big[\|\tilde{X}^{\varepsilon,\delta}-\hat{X}^{\varepsilon,\delta}\|^2_{\alpha,t}\textbf{1}_{A_{R,t}}\big]\mathrm{d}t,
		\label{eqn-4.28}
	\end{align}
	where  $\Delta(\tilde{X}^{\varepsilon,\delta}_t)=\int_0^t\frac{\|\tilde{X}^{\varepsilon,\delta}_t-\tilde{X}^{\varepsilon,\delta}_s\|}{(t-s)^{\alpha+1}}\mathrm{d}s$, $\Delta(\hat{X}^{\varepsilon,\delta}_t)=\int_0^t\frac{\|\hat{X}^{\varepsilon,\delta}_t-\hat{X}^{\varepsilon,\delta}_s\|}{(t-s)^{\alpha+1}}\mathrm{d}s$ can be dominated by $\|\tilde{X}^{\varepsilon,\delta}\|_{\alpha,\infty}$ and $\|\hat{X}^{\varepsilon,\delta}\|_{\alpha,\infty}$ which are in turn dominated by $C=C(R,N,T)>0$ independent of $\varepsilon$, $\delta$, $\Delta$.  We obtained the first inequality by using  \eqref{eqn-4.50} in Lemma \ref{lem4.3}. The second inequality comes from Remark \ref{re3.1}. In particular, we use Fubini's theorem and Lemma \ref{lem2.1} in the first term of the second inequality. We use Fubini's theorem and H\"{o}lder's inequality in the third and fourth inequalities, respectively. Applying the fact that $1-H<\alpha<\frac12$, we have the fifth inequality.

	Similarly, from the fact that  $(u^\varepsilon,v^\varepsilon)\in \mathcal{A}_b^N$,  for any $\gamma\in(\frac12, 1-\alpha)$, we have
	\begin{align}
		M_{22}
		\leq&C\int_0^T\mathbb{E}\bigg[\bigg(\int_0^t \int_0^s\Big( \frac{(s-r)^{-\gamma}r^{-\alpha}+(s-r)^{-\alpha-\gamma}}{(t-s)^{\alpha+1-\gamma}}\|\tilde{X}^{\varepsilon,\delta}_r-\hat{X}^{\varepsilon,\delta}_r\|\notag\\
		&+\int_0^r\frac{\sup_{i\in\mathbb{N}}\big\|\big(g(\tilde{X}^{\varepsilon,\delta}_r)-g(\hat{X}^{\varepsilon,\delta}_r)-g(\tilde{X}^{\varepsilon,\delta}_q)+g(\hat{X}^{\varepsilon,\delta}_q)\big)e_i\big\|}{(t-s)^{\alpha+1-\gamma}(r-q)^{\alpha+1}(s-r)^{\gamma}}\mathrm{d}q	\Big)\mathrm{d}r\mathrm{d}s	\bigg)^2 \textbf{1}_{A_{R,t}}  \bigg]\mathrm{d}t\notag\\
		%	\leq& C\int_0^T\mathbb{E}\bigg[\bigg( \int_0^t(t-s)^{\gamma-\alpha-1}\int_0^s\left((s-r)^{-\gamma}r^{-\alpha}+(s-r)^{-\alpha-\gamma}\right) \|\tilde{X}^{\varepsilon,\delta}_r-\hat{X}^{\varepsilon,\delta}_r\|  \mathrm{d}r\mathrm{d}s \notag\\
		%	&+\int_0^t(t-s)^{\gamma-\alpha-1}\int_0^s(s-r)^{-\gamma}\Big(\int_0^r\frac{\|\tilde{X}^{\varepsilon,\delta}_r-\hat{X}^{\varepsilon,\delta}_r-\tilde{X}^{\varepsilon,\delta}_q+\hat{X}^{\varepsilon,\delta}_q\|}{(r-q)^{\alpha+1}}\mathrm{d}q \notag\\
		%	&+ \|\tilde{X}^{\varepsilon,\delta}_r-\hat{X}^{\varepsilon,\delta}_r\|\int_0^r \frac{\|\tilde{X}^{\varepsilon,\delta}_r-\tilde{X}^{\varepsilon,\delta}_q\|+\|\hat{X}^{\varepsilon,\delta}_r-\hat{X}^{\varepsilon,\delta}_q\|}{(r-q)^{\alpha+1}}\mathrm{d}q\Big)\mathrm{d}r\mathrm{d}s  
		%	\bigg)^2  \textbf{1}_{A_{R,t}}  \bigg]\mathrm{d}t\notag\\
		\leq& C\int_0^T\mathbb{E}\bigg[\bigg(\int_0^t \|\tilde{X}^{\varepsilon,\delta}_r-\hat{X}^{\varepsilon,\delta}_r\|(t-r)^{-\alpha}\big(r^{-\alpha}+(t-r)^{-\alpha}+1\big)\mathrm{d}r  \notag\\
		&+ \int_0^t(t-r)^{-\alpha}\int_0^r \frac{\|\tilde{X}^{\varepsilon,\delta}_r-\hat{X}^{\varepsilon,\delta}_r-\tilde{X}^{\varepsilon,\delta}_q+\hat{X}^{\varepsilon,\delta}_q\|}{(r-q)^{\alpha+1}}\mathrm{d}q\mathrm{d}r\notag\\
		&+\int_0^t \|\tilde{X}^{\varepsilon,\delta}_r-\hat{X}^{\varepsilon,\delta}_r\|\big(\Delta(\tilde{X}^{\varepsilon,\delta}_r)+\Delta(\tilde{X}^{\varepsilon,\delta}_r)\big)(t-r)^{-\alpha}\mathrm{d}r	
		\bigg)^2   \textbf{1}_{A_{R,t}}
		\bigg]\mathrm{d}t\notag\\
		%	\leq& C\int_0^T\mathbb{E}\bigg[\Big(\int_0^t \|\tilde{X}^{\varepsilon,\delta}_r-\hat{X}^{\varepsilon,\delta}_r\| (t-r)^{-\alpha}\left(r^{-\alpha}+(t-r)^{-\alpha}+1\right)\mathrm{d}r\Big)^2  \textbf{1}_{A_{R,t}}\notag\\
		%	&+\bigg(\int_0^t(t-r)^{-\alpha}\int_0^r\frac{\|\tilde{X}^{\varepsilon,\delta}_r-\hat{X}^{\varepsilon,\delta}_r-\tilde{X}^{\varepsilon,\delta}_q+\hat{X}^{\varepsilon,\delta}_q\|}{(r-q)^{\alpha+1}}\mathrm{d}q\mathrm{d}r  \bigg)^2\textbf{1}_{A_{R,t}}\bigg]\mathrm{d}t \notag\\
		%	\leq& C\int_0^T\mathbb{E}\bigg[\int_0^t\left(r^{-\alpha}+(t-r)^{-\alpha}+1\right)^2\mathrm{d}r\notag\\
		%	&\times \int_0^t(t-r)^{-2\alpha}\|\tilde{X}^{\varepsilon,\delta}_r-\hat{X}^{\varepsilon,\delta}_r\|^2\mathrm{d}r \textbf{1}_{A_{R,t}}\notag\\
		%	&+ \int_0^t(t-r)^{-2\alpha}\mathrm{d}r \int_0^t \Big(\int_0^r \frac{\|\tilde{X}^{\varepsilon,\delta}_r-\hat{X}^{\varepsilon,\delta}_r-\tilde{X}^{\varepsilon,\delta}_q+\hat{X}^{\varepsilon,\delta}_q\|}{(r-q)^{\alpha+1}}\mathrm{d}q \Big)^2\mathrm{d}r \textbf{1}_{A_{R,t}}	
		%	\bigg]\mathrm{d}t \notag\\
%		\leq& C\int_0^T \mathbb{E}\bigg[\sup_{0\leq s\leq t} \|\tilde{X}^{\varepsilon,\delta}_s-\hat{X}^{\varepsilon,\delta}_s\|^2 \textbf{1}_{A_{R,t}} \notag\\ 
%		&+ \int_0^t\Big(\int_0^r \frac{\|\tilde{X}^{\varepsilon,\delta}_r-\hat{X}^{\varepsilon,\delta}_r-\tilde{X}^{\varepsilon,\delta}_q+\hat{X}^{\varepsilon,\delta}_q\|}{(r-q)^{\alpha+1}}\mathrm{d}q  \Big)
%		^2\mathrm{d}r \textbf{1}_{A_{R,t}}	  \bigg]\mathrm{d}t\notag\\
		\leq& C\int_0^T \mathbb{E}\big[\|\tilde{X}^{\varepsilon,\delta}-\hat{X}^{\varepsilon,\delta}\|^2_{\alpha,t}\textbf{1}_{A_{R,t}} \big]\mathrm{d}t,	
		\label{eqn-4.29}
	\end{align}
	where the first inequality follows  \eqref{eqn-4.51} in Lemma \ref{lem4.3}.
	We use  Remark \ref{re3.1}, Fubini's theorem and Lemma \ref{lem2.1} in the second step. 
	In addition, we use H\"{o}lder's inequality in the third step.

	Similarly to  \eqref{eqn-3.13}, from Remark \ref{re3.1}, we have
	\begin{align}
		M_{23}
		\leq& 
		C\mathbb{E}\bigg[\sup_{t\in[0,T]}\big(\Lambda^{0,t}_{\alpha,u^\varepsilon}\big)^2\Big(\int_0^t\frac{\|\tilde{X}^{\varepsilon,\delta}_s-\hat{X}^{\varepsilon,\delta}_s\|}{s^\alpha}\mathrm{d}s
		+\int_0^t  \frac{\|\tilde{X}^{\varepsilon,\delta}_s-\hat{X}^{\varepsilon,\delta}_s\|}  {(t-s)^{\alpha}} \mathrm{d}s  \notag\\
		&+\int_0^t\int_0^s\frac {\sup_{i\in\mathbb{N}}\big\|\big(g(\tilde{X}^{\varepsilon,\delta}_s)-g(\hat{X}^{\varepsilon,\delta}_s)-g(\tilde{X}^{\varepsilon,\delta}_r)+g(\hat{X}^{\varepsilon,\delta}_r)\big)e_i
			\big\|}{(s-r)^{\alpha+1}}\mathrm{d}r\mathrm{d}s	
		\Big)^2 \textbf{1}_{A_{R,T}}	\bigg]\notag\\
		\leq& C\mathbb{E}\bigg[\sup_{t\in[0,T]} \bigg(\int_0^t \|\tilde{X}^{\varepsilon,\delta}_s-\hat{X}^{\varepsilon,\delta}_s\|
		\big(s^{-\alpha}+(t-s)^{-\alpha}+\Delta(\tilde{X}^{\varepsilon,\delta}_s)+\Delta(\hat{X}^{\varepsilon,\delta}_s)\big)\mathrm{d}s \notag\\
		&+\int_0^t\int_0^s \frac{\|\tilde{X}^{\varepsilon,\delta}_s-\hat{X}^{\varepsilon,\delta}_s-\tilde{X}^{\varepsilon,\delta}_r+\hat{X}^{\varepsilon,\delta}_r\|}{(s-r)^{\alpha+1}}\mathrm{d}r\mathrm{d}s    \bigg)^2 \textbf{1}_{A_{R,T}}\bigg]\notag\\
		\leq& C\mathbb{E}\bigg[\int_0^T\|\tilde{X}^{\varepsilon,\delta}_t-\hat{X}^{\varepsilon,\delta}_t\|^2\mathrm{d}t\textbf{1}_{A_{R,T}}\notag\\
		&+\int_0^T\Big(\int_0^t\frac{\|\tilde{X}^{\varepsilon,\delta}_t-\hat{X}^{\varepsilon,\delta}_t-\tilde{X}^{\varepsilon,\delta}_s+\hat{X}^{\varepsilon,\delta}_s\|}{(t-s)^{\alpha+1}}\mathrm{d}s\Big)^2\mathrm{d}t \textbf{1}_{A_{R,T}} \bigg]\notag\\
		\leq& C\int_0^T\mathbb{E}\big[\|\tilde{X}^{\varepsilon,\delta}-\hat{X}^{\varepsilon,\delta}\|^2_{\alpha,t}\textbf{1}_{A_{R,t}}
		\big]\mathrm{d}t\notag\\
		&+ C\mathbb{E}\bigg[\int_0^T\Big(\int_0^t\frac{\|\tilde{X}^{\varepsilon,\delta}_t-\hat{X}^{\varepsilon,\delta}_t-\tilde{X}^{\varepsilon,\delta}_s+\hat{X}^{\varepsilon,\delta}_s\|}{(t-s)^{\alpha+1}}\mathrm{d}s\Big)^2\mathrm{d}t \textbf{1}_{A_{R,T}} \bigg].
		\label{eqn-4.30}
	\end{align}
	Here, we obtain the third inequality by using H\"{o}lder's inequality.
	For simplicity, let 
	$$A_4:= C\mathbb{E}\bigg[\int_0^T\Big(\int_0^t\frac{\|\tilde{X}^{\varepsilon,\delta}_t-\hat{X}^{\varepsilon,\delta}_t-\tilde{X}^{\varepsilon,\delta}_s+\hat{X}^{\varepsilon,\delta}_s\|}{(t-s)^{\alpha+1}}\mathrm{d}s\Big)^2\mathrm{d}t \textbf{1}_{A_{R,T}} \bigg].$$
	We have
	\begin{align}
		A_4
		\leq& 
		C\mathbb{E}\bigg[\int_0^T\Big(\int_0^t(t-s)^{-\alpha-1}\Big\|\int_0^tS_{t-r}\left(b(\tilde{X}^{\varepsilon,\delta}_r,\tilde{Y}^{\varepsilon,\delta}_r)-b(\tilde{X}^{\varepsilon,\delta}_{r(\Delta)},\hat{Y}^{\varepsilon,\delta}_r)\right)\mathrm{d}r\notag\\
		&-\int_0^sS_{s-r}\left(b(\tilde{X}^{\varepsilon,\delta}_r,\tilde{Y}^{\varepsilon,\delta}_r)-b(\tilde{X}^{\varepsilon,\delta}_{r(\Delta)},\hat{Y}^{\varepsilon,\delta}_r)\right)\mathrm{d}r\Big\|\mathrm{d}s \Big)^2  \mathrm{d}t \textbf{1}_{A_{R,T}} \bigg]\notag\\
		&+C\mathbb{E}\bigg[\int_0^T\Big(\int_0^t(t-s)^{-\alpha-1}\Big\|\int_0^tS_{t-r}\left(g(\tilde{X}^{\varepsilon,\delta}_r)-g(\hat{X}^{\varepsilon,\delta}_r)\right)\mathrm{d}u^\varepsilon_r\notag\\
		&-\int_0^sS_{s-r}\left(g(\tilde{X}^{\varepsilon,\delta}_r)-g(\hat{X}^{\varepsilon,\delta}_r)\right)\mathrm{d}u^\varepsilon_r\Big\|\mathrm{d}s \Big)^2  \mathrm{d}t \textbf{1}_{A_{R,T}} \bigg]\notag\\
		&+\varepsilon  C \mathbb{E}\bigg[\int_0^T\Big(\int_0^t\frac{\|\int_0^t S_{t-r}g(\tilde{X}^{\varepsilon,\delta}_r)dB^H_r-\int_0^s S_{s-r}g(\tilde{X}^{\varepsilon,\delta}_r)dB^H_r  \|}{(t-s)^{ \alpha+1}}\mathrm{d}s\Big)^2\mathrm{d}t  \textbf{1}_{A_{R,T}} \bigg] \notag\\
		=:&A_{41}+A_{42}+A_{43}.
		\label{eqn-4.31}
	\end{align}
	Following similar arguments as for the term $M_1$ in \eqref{eqn-4.26}, we have 
	\begin{align}
		A_{41}
		\leq& 
		C\Delta+C\frac\delta\varepsilon.
		\label{eqn-4.32}
	\end{align}
	Referring to the terms $M_{21}$ in \eqref{eqn-4.28} and $M_{22}$ in \eqref{eqn-4.29}, we have 
	\begin{align}
		A_{42}
		\leq& 
		C\int_0^T \mathbb{E}\left[\|\tilde{X}^{\varepsilon,\delta}-\hat{X}^{\varepsilon,\delta}\|^2_{\alpha,t}\textbf{1}_{A_{R,t}} \right] \mathrm{d}t.	
		\label{eqn-4.33}
	\end{align}
	For the term $A_{43}$, by analogous calculations as for the term $H_3$ in Lemma \ref{lem4.3} and taking the same $\rho$ as in \eqref{eqn-3.26}, we obtain
	\begin{align}
		A_{43}
		%	\leq& 
		%   \varepsilon C \mathbb{E}\bigg[ 
		%   \bigg(\sup_{t\in[0,T]} \int_0^t\frac{\|\int_0^t S_{t-r}g(\tilde{X}^{\varepsilon,\delta}_r)dB^H_r-\int_0^s S_{s-r}g(\tilde{X}^{\varepsilon,\delta}_r)dB^H_r  \|}{(t-s)^{ \alpha+1}}\mathrm{d}s
		%    \bigg)^2
		%    \bigg]\notag\\
		%    \leq & \varepsilon C \mathbb{E}\bigg[e^{2\rho T} \bigg(\sup_{t\in[0,T]} e^{-\rho t} \int_0^t\frac{\|\int_0^t S_{t-r}g(\tilde{X}^{\varepsilon,\delta}_r)dB^H_r-\int_0^s S_{s-r}g(\tilde{X}^{\varepsilon,\delta}_r)dB^H_r  \|}{(t-s)^{ \alpha+1}}\mathrm{d}s
		%   \bigg)^2\bigg]\notag\\
		\leq  \varepsilon C \mathbb{E}\big[e^{2\rho T}\big(1+\|\tilde{X}^{\varepsilon,\delta}\|_{\rho,T}+\|\tilde{X}^{\varepsilon,\delta}\|_{1,\rho,T}\big)^2
		\textbf{1}_{A_{R,T}} \big]\
		\leq C \varepsilon .    
		\label{eqn-4.34}
	\end{align}
	It is worth noting that $\rho$, $\|\tilde{X}^{\varepsilon,\delta}\|_{\rho,T}$ and $\|\tilde{X}^{\varepsilon,\delta}\|_{1,\rho,T}$ are dominated by a constant $C=C(R,N,T)>0$ and independent of $\varepsilon$, $\delta$, $\Delta$.
	Combining  \eqref{eqn-4.31}-\eqref{eqn-4.34}, we conclude that
	$$A_4\leq C\int_0^T \mathbb{E}\big[\|\tilde{X}^{\varepsilon,\delta}-\hat{X}^{\varepsilon,\delta}\|^2_{\alpha,t}  \textbf{1}_{A_{R,t}}\big]\mathrm{d}t +C\Big(\Delta+\frac\delta\varepsilon+\varepsilon\Big).$$
	Then, substituting the inequality above into  \eqref{eqn-4.30} and combining  \eqref{eqn-4.27}-\eqref{eqn-4.29}, we have 
	\begin{align}
		M_{2}
		\leq& 
		C\int_0^T\mathbb{E}\big[\|\tilde{X}^{\varepsilon,\delta}-\hat{X}^{\varepsilon,\delta}\|^2_{\alpha,t}  \textbf{1}_{A_{R,t}}\big]\mathrm{d}t+C\Big(\Delta+\frac\delta\varepsilon+\varepsilon\Big).
		\label{eqn-4.35}
	\end{align}
	
	For the term $M_3$, from Lemma \ref{lem4.3}, we have 
	\begin{align}
		M_{3}
		%	\leq
		%	\varepsilon C\mathbb{E}\big[\|\tilde{X}^{\varepsilon,\delta}\|^2_{\alpha,T}  \textbf{1}_{A_{R,T}}    \big]
		%	\leq \varepsilon  C\mathbb{E}\big[\|\tilde{X}^{\varepsilon,\delta}\|^2_{\alpha,\infty}  \textbf{1}_{A_{R,T}}    \big]
		\leq C \varepsilon     .
		\label{eqn-4.36}
	\end{align}
	Substituting  \eqref{eqn-4.26}, \eqref{eqn-4.35} and \eqref{eqn-4.36} into  \eqref{eqn-4.24} yields that
	$$\mathbb{E}\big[ \|\tilde{X}^{\varepsilon,\delta}- \hat{X}^{\varepsilon,\delta}\|^2_{\alpha,T} \textbf{1}_{A_{R,T}} \big] \leq  C\int_0^T\mathbb{E}\big[\|\tilde{X}^{\varepsilon,\delta}-\hat{X}^{\varepsilon,\delta}\|^2_{\alpha,t}  \textbf{1}_{A_{R,t}}\big]\mathrm{d}t+C\Big(\Delta+\frac\delta\varepsilon+\varepsilon\Big). $$
	Applying Gronwall's inequality, we indicate that
	\begin{align}
		\mathbb{E}\big[ \|\tilde{X}^{\varepsilon,\delta}- \hat{X}^{\varepsilon,\delta}\|^2_{\alpha,T} \textbf{1}_{A_{R,T}} \big] \leq C\Big(\Delta+\frac\delta\varepsilon+\varepsilon \Big).
		\label{eqn-4.37}
	\end{align}
	Substituting  \eqref{eqn-4.23} and \eqref{eqn-4.37} into  \eqref{eqn-4.22}, we have 
	\begin{align}
		\mathbb{E}\big[ \|\tilde{X}^{\varepsilon,\delta}- \hat{X}^{\varepsilon,\delta}\|^2_{\alpha,T}  \big] \leq C\Big(\Delta+\frac\delta\varepsilon+\varepsilon\Big)
		+C'R^{-\frac12}\sqrt{\mathbb{E}\big[\Lambda^{0,T}_{\alpha,B^H}\big]},
		\label{eqn-4.38}
	\end{align}
	where the $C$ is a positive constant which is independent of $\varepsilon$, $\delta$, $\Delta$ and $C'$ is a positive constant which is independent of $R$, $\varepsilon$, $\delta$, $\Delta$.

	%???????

	\textbf{Step 2.} For any fixed $N\in\mathbb{N}$, let $(u^\varepsilon,v^\varepsilon)\in \mathcal{A}_b^N$. We construct the following SPDE,
	$$
	d\bar{X}^{u^\varepsilon}_t=\big(A\bar{X}^{u^\varepsilon}_t+\bar{b}(\bar{X}^{u^\varepsilon}_t)\big)\mathrm{d}t+g(\bar{X}^{u^\varepsilon}_t)\mathrm{d}u^\varepsilon_t,\quad\bar{X}^{u^\varepsilon}_0=X_0,\quad t\in[0,T].
	$$
	In other words, $\bar{X}^{u^\varepsilon}=\mathcal{G}^{0}(u^\varepsilon,v^\varepsilon)$. In this step, We will estimate $\mathbb{E}\big[ \|\hat{X}^{\varepsilon,\delta}- \bar{X}^{u^\varepsilon}\|^2_{\alpha,T} \big]$.

	It is easy to see that
	\begin{align}
		\mathbb{E}\big[ \|\hat{X}^{\varepsilon,\delta}- \bar{X}^{u^\varepsilon}\|^2_{\alpha,T}\textbf{1}_{A_{R,T}} \big]
		\leq&
		C\mathbb{E}\Big[\Big\|\int_0^\cdot S_{\cdot-s}\big(
		b(\tilde{X}^{\varepsilon,\delta}_{s(\Delta)},\hat{Y}^{\varepsilon,\delta}_s)-\bar{b}(\tilde{X}^{\varepsilon,\delta}_{s(\Delta)})\big)\mathrm{d}s	
		\Big\|^2_{\alpha,T}  \textbf{1}_{A_{R,T}}
		\Big]\notag\\
		&+ C\mathbb{E}\Big[\Big\|\int_0^\cdot S_{\cdot-s}\big(
		\bar{b}(\tilde{X}^{\varepsilon,\delta}_{s(\Delta)})-\bar{b}(\tilde{X}^{\varepsilon,\delta}_s)\big)\mathrm{d}s	
		\Big\|^2_{\alpha,T}  \textbf{1}_{A_{R,T}}\Big]\notag\\
		&+ C\mathbb{E}\Big[\Big\|\int_0^\cdot S_{\cdot-s}\big(
		\bar{b}(\tilde{X}^{\varepsilon,\delta}_s)-\bar{b}(\hat{X}^{\varepsilon,\delta}_s)\big)\mathrm{d}s	
		\Big\|^2_{\alpha,T}  \textbf{1}_{A_{R,T}}\Big]\notag\\
		&+ C\mathbb{E}\Big[\Big\|\int_0^\cdot S_{\cdot-s}\big(
		\bar{b}(\hat{X}^{\varepsilon,\delta}_s)-\bar{b}(\bar{X}^{u^\varepsilon}_s)\big)\mathrm{d}s	
		\Big\|^2_{\alpha,T} \textbf{1}_{A_{R,T}}\Big]\notag\\
		&+C\mathbb{E}\Big[ \Big\| \int_0^\cdot S_{\cdot-s}\big(g(\hat{X}^{\varepsilon,\delta}_s)-g(\bar{X}^{u^\varepsilon}_s)\big)\mathrm{d}u^{\varepsilon}_s\Big\|^2_{\alpha,T}  \textbf{1}_{A_{R,T}}
		\Big]\notag\\
		=:&K_1+K_2+K_3+K_4+K_5.
		\label{eqn-4.41}
	\end{align}

	Following similar arguments as for the term $J_1$ in \cite[pp. 20-21]{Pei2020}, for any $\zeta\in(\frac{1+2\alpha}{4},\frac12)$, we conclude that
	\begin{align}
		K_1
		\leq&C\Delta^{2\zeta}
		+\frac{ C}{ \Delta^{2}} \max_{0\leq k \leq \lfloor\frac T\Delta\rfloor-1}\mathbb{E}\bigg[\Big\|\int_{k\Delta}^{(k+1)\Delta}S_{t-r}\left(b(\tilde{X}^{\varepsilon,\delta}_{k\Delta},\hat{Y}^{\varepsilon,\delta}_r)-\bar{b}(\tilde{X}^{\varepsilon,\delta}_{k\Delta})	\right)\mathrm{d}r\Big\|^2\textbf{1}_{A_{R,T}}\bigg]
		\notag\\
		&+\frac C\Delta\sup_{t\in[0,T]}\int_0^t\sum_{k=0}^{\lfloor\frac s\Delta\rfloor-1}\frac{\mathbb{E}\Big[\big\|\int_{k\Delta}^{(k+1)\Delta}\big(b(\tilde{X}^{\varepsilon,\delta}_{k\Delta},\hat{Y}^{\varepsilon,\delta}_r)-\bar{b}(\tilde{X}^{\varepsilon,\delta}_{k\Delta})\big)\mathrm{d}r \big\|^2\textbf{1}_{A_{R,T}}\Big]}{(s-k\Delta)^{2\zeta}(t-s)^{\alpha+\frac32-2\zeta}}\mathrm{d}s. 
		\label{eqn-4.48}
	\end{align}
	%We postpone the detailed proof of the crucial estimate \eqref{eqn-4.48} in Appendix B. 
	Note that for $2\zeta<1$, we have 
	\begin{align}
		\displaystyle\sum_{k=0}^{\lfloor\frac{s}{\Delta}\rfloor-1}(s-k\Delta)^{-2\zeta}
		\leq&C\Delta^{-1}.\label{eqn-4.49}
	\end{align}
	According to \cite[Lemma 4.12]{Pei2020}, we obtain 
	\begin{align}
		\mathbb{E}\bigg[\Big\|\int_{k\Delta}^{(k+1)\Delta}S_{(k+1)\Delta-r}(b(\tilde{X}^{\varepsilon,\delta}_{k\Delta},\hat{Y}_{r}^{\varepsilon,\delta})-\bar{b}(\tilde{X}^{\varepsilon,\delta}_{k\Delta}))\mathrm{d}r\Big\|^{2}\textbf{1}_{A_{R,T}}\bigg]
		\leq C\delta^{2}\Big(\frac{2}{\eta}\frac{\Delta}{\delta}-\frac{4}{\eta^{2}}+e^{\frac{-\eta}{2}\frac{\Delta}{\delta}}\Big).\notag
	\end{align}

	Therefore, substituting \eqref{eqn-4.49} into  \eqref{eqn-4.48} leads to
	\begin{align}
		K_1\leq 
		C\Delta^{2\zeta}+C \Delta^{-2}\delta^2\Big(\frac{2}{\eta}\frac{\Delta}{\delta}-\frac{4}{\eta^{2}}+e^{\frac{-\eta}{2}\frac{\Delta}{\delta}}\Big)\leq C\Big(\frac\delta\Delta+\Delta^{2\zeta}\Big).
		\label{eqn-4.52}
	\end{align}

	For the terms $K_2$ and $K_3$, by referring to the term $A_3$  in \eqref{eqn-43} from \textbf{Step 1}, and applying Lemmas \ref{lem3.2} and \ref{lem4.4}, we have 
	\begin{align}
		K_2+K_3
		%	\leq&C\int_0^T \mathbb{E}\left[\|\bar{b}(\tilde{X}^{\varepsilon,\delta}_{t(\Delta)})- \bar{b}(\tilde{X}^{\varepsilon,\delta}_t)  \|^2   	+\|\bar{b}(\tilde{X}^{\varepsilon,\delta}_t)- \bar{b}(\hat{X}^{\varepsilon,\delta}_t)  \|^2  	\right] \mathrm{d}t\notag\\
		\leq& C\int_0^T \mathbb{E}\big[\|\tilde{X}^{\varepsilon,\delta}_{t(\Delta)}-\tilde{X}^{\varepsilon,\delta}_t\|^2 \textbf{1}_{A_{R,T}}+\|\tilde{X}^{\varepsilon,\delta}_t-\hat{X}^{\varepsilon,\delta}_t\|^2 
		\textbf{1}_{A_{R,T}}\big]\mathrm{d}t\notag\\
		\leq& C\Big(\Delta+\frac\delta\varepsilon+\varepsilon\Big) ,\label{eqn-4.53}
	\end{align}
	where $C>0$ is a constant which is independent of $\varepsilon$, $\delta$, $\Delta$.
	%and $C'>0$ is a constant which is independent of $R$, $\varepsilon$, $\delta$, $\Delta$
 We use \eqref{eqn-4.38} in \textbf{Step 1}   to obtain the final inequality.
	Similarly, we have 
	\begin{align}
		K_4
		\leq 
		C\int_0^T \mathbb{E}\big[\|\hat{X}^{\varepsilon,\delta}- \bar{X}^{u^\varepsilon}
		\|^2_{\alpha,t}  
		\textbf{1}_{A_{R,T}}
		\big] \mathrm{d}t	.
		\label{eqn-4.54}
	\end{align}
	
	The term $K_5$ can be estimated similar to the term $M_2$  
	in \textbf{Step 1}. We have
	\begin{align}
		K_5
		% 	\leq& C\mathbb{E}\bigg[\sup_{t\in[0,T]}\Big\|\int_0^t S_{t-s}\big(g(\hat{X}^{\varepsilon,\delta}_s)-g(\bar{X}^{u^\varepsilon}_s)\big)\mathrm{d}u^\varepsilon_s\Big\|^2\bigg]\notag\\
		% 	&+C\mathbb{E}\bigg[\int_0^T \Big(\int_0^t (t-s)^{-\alpha-1}\Big\|\int_0^s (S_{t-r}-S_{s-r})\big(g(\hat{X}^{\varepsilon,\delta}_r)-g(\bar{X}^{u^\varepsilon}_r)\big)\mathrm{d}u^\varepsilon_r\Big\|\mathrm{d}s	\Big)^2 \mathrm{d}t\bigg]\notag\\
		% 	&+ C\mathbb{E}\bigg[\int_0^T \Big(\int_0^t (t-s)^{-\alpha-1}\Big\|\int_s^t S_{t-r}\big(g(\hat{X}^{\varepsilon,\delta}_r)-g(\bar{X}^{u^\varepsilon}_r)\big)\mathrm{d}u^\varepsilon_r\Big\|\mathrm{d}s	\Big)^2 \mathrm{d}t \bigg]\notag\\
		\leq& C\int_0^T\mathbb{E}\big[\|\hat{X}^{\varepsilon,\delta}-\bar{X}^{u^\varepsilon}\|^2_{\alpha,t}\textbf{1}_{A_{R,T}}\big]\mathrm{d}t
		+\sum_{i=1}^{4}K_i\notag\\
		\leq& C\int_0^T\mathbb{E}\big[\|\hat{X}^{\varepsilon,\delta}-\bar{X}^{u^\varepsilon}\|^2_{\alpha,t}\textbf{1}_{A_{R,T}}\big]\mathrm{d}t +C\Big(\frac\delta\Delta+\Delta^{2\zeta}+\frac\delta\varepsilon+\varepsilon\Big)
%		+C'R^{-\frac12}\sqrt{\mathbb{E}[\Lambda^{0,T}_{\alpha,B^H}]}
		.
		\label{eqn-4.55}
	\end{align}
	
	Substituting  \eqref{eqn-4.52}-\eqref{eqn-4.55} into  \eqref{eqn-4.41} leads to
	\begin{align*}
		\mathbb{E}\big[\|\hat{X}^{\varepsilon,\delta}-\bar{X}^{u^\varepsilon}\|^2_{\alpha,T}\textbf{1}_{A_{R,T}}\big]
		\leq & C\int_0^T\mathbb{E}\big[\|\hat{X}^{\varepsilon,\delta}-\bar{X}^{u^\varepsilon}\|^2_{\alpha,t} \textbf{1}_{A_{R,T}}\big]\mathrm{d}t\\
		&+C\Big(\frac\delta\Delta+\Delta^{2\zeta}+\frac\delta\varepsilon+\varepsilon\Big) 
%		\\&+C'R^{-\frac12}\sqrt{\mathbb{E}[\Lambda^{0,T}_{\alpha,B^H}]}
.
	\end{align*}
	Making use of Gronwall's inequality, we have
	\begin{align*}
		\mathbb{E}\big[\|\hat{X}^{\varepsilon,\delta}-\bar{X}^{u^\varepsilon}\|^2_{\alpha,T}\textbf{1}_{A_{R,T}}\big]
		\leq C\Big(\frac\delta\Delta+\Delta^{2\zeta}+\frac\delta\varepsilon+\varepsilon\Big)
%		+C'R^{-\frac12}\sqrt{\mathbb{E}[\Lambda^{0,T}_{\alpha,B^H}]}
.%\label{eqn-4.56}
	\end{align*}
	According to Markov's inequality, we have
\begin{align}
	\mathbb{E}\big[\|\hat{X}^{\varepsilon,\delta}-\bar{X}^{u^\varepsilon}\|^2_{\alpha,T}\big]
	\leq&
	\mathbb{E}\big[\|\hat{X}^{\varepsilon,\delta}-\bar{X}^{u^\varepsilon}\|^2_{\alpha,T}\textbf{1}_{A_{R,T}}\big]
	+\mathbb{E}\left[ \|\hat{X}^{\varepsilon,\delta}-\bar{X}^{u^\varepsilon}\|^4_{\alpha,T}  \right]^\frac12  \mathbb{P}\left(\tau_R<T \right)^\frac12  \notag\\
	\leq&
	C\big(\frac\delta\Delta+\Delta^{2\zeta}+\frac\delta\varepsilon+\varepsilon\big)
	+C'R^{-\frac12}\sqrt{\mathbb{E}[\Lambda^{0,T}_{\alpha,B^H}]}
	,\label{eqn-3.84}
\end{align}
where the $C$ is a positive constant which is independent of $\varepsilon$, $\delta$, $\Delta$ and $C'$ is a positive constant which is independent of $R$, $\varepsilon$, $\delta$, $\Delta$.

\textbf{Step 3.} 
By  \eqref{eqn-4.38} and \eqref{eqn-3.84}, taking $\limsup_{\varepsilon\to 0}$ for every fixed large $R$ and then letting $R\to \infty$, we have
$$\lim_{\varepsilon\to 0} \mathbb{E}\big[ \|\tilde{X}^{\varepsilon,\delta}- \bar{X}^{u^\varepsilon}\|^2_{\alpha,T}  \big]=0. $$	Precisely, we take   $\Delta=\Delta(\delta(\varepsilon))$ such that $\frac{\Delta}{\delta(\varepsilon)}\to\infty $ and $\Delta\to 0$, as $\varepsilon\to0$. Note that, from Assumption \textbf{(A7)}, $\delta=o(\varepsilon)$. For example, we could take $\Delta=\varepsilon$.

For any fixed $N\in\mathbb{N}$, let $\{(u^\varepsilon,v^\varepsilon)\}_{\varepsilon\in(0,1)}\subset\mathcal{A}_b^N$ such that   $\{(u^\varepsilon,v^\varepsilon)\}_{\varepsilon\in(0,1)}$ converges to $(u,v)\in\mathcal{S}_N$ in the weak topology as $\varepsilon\to 0$. 
According to the proof of \textit{Condition (1)}, it follows that $\{\bar{X}^{u^\varepsilon}=\mathcal{G}^{0}(u^\varepsilon,v^\varepsilon)\}_{\varepsilon\in(0,1)}$ weakly convergences to $\bar{X}^{u}=\mathcal{G}^{0}(u,v)$ in $C([0,T],V)$ as $\varepsilon\to 0$. 
Then, by Portemanteau's theorem \cite[Theorem 13.16]{Klenke2020}, for any bounded Lipschitz function $f:C([0,T],V)\to \mathbb{R}$, it follows that as $\varepsilon\to0$
\begin{align*}
	\big\|\mathbb{E}\big[f(\tilde{X}^{\varepsilon,\delta})\big]-\mathbb{E}\big[f(\bar{X}^{u})\big]\big\|\leq&
	\big\|\mathbb{E}\big[f(\tilde{X}^{\varepsilon,\delta})\big]-\mathbb{E}\big[f(\bar{X}^{u^\varepsilon})\big]\big\|+\big\|\mathbb{E}\big[f(\tilde{X}^{u^\varepsilon})\big]-\mathbb{E}\big[f(\bar{X}^{u})\big]\big\|\\
	\leq& C \mathbb{E}\big[\|\tilde{X}^{\varepsilon,\delta}-\bar{X}^{u^\varepsilon}\|^2_{\alpha,T}\big]^{\frac12}+\big\|\mathbb{E}\big[f(\tilde{X}^{u^\varepsilon})\big]-\mathbb{E}\big[f(\bar{X}^{u})\big]\big\|\to 0.
\end{align*}
From the above, it follows that $\{\mathcal{G}^\varepsilon(\sqrt\varepsilon B^H+u^\varepsilon, \sqrt\varepsilon W+v^\varepsilon)\}_{\varepsilon\in(0,1)}$ converges to $\mathcal{G}^{0}(u,v)$ in distribution as $\varepsilon\to 0$.	
\end{proof}

\begin{proof}[Proof of Theorem \ref{th3.1}]
	According to the Lemma \ref{lemA.1}, the proof of the \textit{Condition (1)} and \textit{(2)}, we have that the slow component $X^{\varepsilon,\delta}$ of system \eqref{eqn-1.2} satisfies the LDP with the good
	rate function $I:C([0,T],V)\to [0,\infty]$. The proof was completed. 
\end{proof}

\section{Moderate Deviation Principle}\label{6}

Based on the LDP established in Section \ref{5}, this section will further investigate the MDP for the stochastic slow-fast partial differential equation \eqref{eqn-1.2} under infinite-dimensional mixed FBM.

To ensure the existence and uniqueness of mild solutions for the stochastic slow-fast differential equation \eqref{eqn-1.2}, assume that the Assumptions \textbf{(A1)}-\textbf{(A3)} in Section \ref{3} hold. To derive the AP for system \eqref{eqn-1.2}, assume that the Assumptions \textbf{(A4)}-\textbf{(A6)} in Section \ref{3} are satisfied. To further establish the MDP for system \eqref{eqn-1.2}, assume that the Assumption \textbf{(A7)} and the following condition hold:
\begin{itemize}
	\item[\textbf{(H1)}]
	The averaged drift coefficient $\bar{b}\in C^1$, and there exists a constant $L>0$, such that for any $x_1,~x_2\in V$,
	$$\|D\bar{b}(x_1)-D\bar{b}(x_2)\|\leq L\|x_1-x_2\|.$$
	\item[\textbf{(H2)}]
	Function $h : (0, 1] \to  (0, \infty)$ is continuous, and satisfies that $\lim_{\varepsilon \to 0}h(\varepsilon)=\infty$ and $\lim_{\varepsilon \to 0}\sqrt{\varepsilon}h(\varepsilon)=0$ for all $\varepsilon\in(0,1]$.
\end{itemize}

From slow-fast system \eqref{eqn-1.2}, it follows that the deviation component \eqref{eqn-6.1} satisfies the following SPDE
\begin{align}\label{eqn-4.2}
	\begin{cases}
		dZ^{\varepsilon,\delta}_t=\frac{1}{\sqrt{\varepsilon}h(\varepsilon)}\big(A(X^{\varepsilon,\delta}_t-\bar{X}_t)+b(X^{\varepsilon,\delta}_t,Y^{\varepsilon,\delta}_t)-\bar{b}(\bar{X}_t)\big)dt+\frac{1}{h(\varepsilon)}g(X^{\varepsilon,\delta}_t)dB^H_t,\\
		Z^{\varepsilon,\delta}_0=0,~ t\in[0,T].
	\end{cases}
\end{align}
Then, there is a measurable map
$\tilde{\mathcal{G}}^{\varepsilon,\delta}:C_0([0,T],V\times V)\to C([0,T],V)$
such that $Z^{\varepsilon,\delta}=\tilde{\mathcal{G}}^{\varepsilon,\delta}\Big(\frac{B^H}{h(\varepsilon)},\frac{W}{h(\varepsilon)}\Big)$.

To prove that the slow-fast system \eqref{eqn-1.2} satisfies the MDP, we need to perform some key estimates. 
%We continuous to assume that $H\in(\frac12,1)$ and $\alpha\in(1-H,\frac12)$.
For any fixed $N\in\mathbb{N}$, let $(u^\varepsilon,v^\varepsilon)\in\mathcal{A}_b^N$, and consider the controlled system corresponding to the slow-fast system  \eqref{eqn-1.2} as follows  
\begin{equation}\label{eqn-4.5}
	\begin{cases}
		\mathrm{d}\bar{X}^{\varepsilon,\delta}_t=(A\bar{X}^{\varepsilon,\delta}_t+b(\bar{X}^{\varepsilon,\delta}_t,\bar{Y}^{\varepsilon,\delta}_t))\mathrm{d}t
		+\sqrt{\varepsilon}h(\varepsilon)g(\bar{X}^{\varepsilon,\delta}_t)\mathrm{d}u^\varepsilon_t
		+\sqrt\varepsilon g(\bar{X}^{\varepsilon,\delta}_t)\mathrm{d}B^H_t,\\  
		\mathrm{d}\bar{Y}^{\varepsilon,\delta}_t=\frac1\delta(A\bar{Y}^{\varepsilon,\delta}_t+F(\bar{X}^{\varepsilon,\delta}_t,\bar{Y}^{\varepsilon,\delta}_t))\mathrm{d}t
		+\frac{h(\varepsilon)}{\sqrt{\delta}}G(\bar{X}^{\varepsilon,\delta}_t,\bar{Y}^{\varepsilon,\delta}_t)\mathrm{d}v^\varepsilon_t
		+\frac{1}{\sqrt\delta} G(\bar{X}^{\varepsilon,\delta}_t,\bar{Y}^{\varepsilon,\delta}_t)\mathrm{d}W_t,\\
		\bar{X}^{\varepsilon,\delta}_0=X_0,~\bar{Y}^{\varepsilon,\delta}_0=Y_0,~t\in[0,T].
	\end{cases}
\end{equation}	
It is easy to see that there exists a unique  mild solution $(\bar{X}^{\varepsilon,\delta},\bar{Y}^{\varepsilon,\delta})$ to the controlled system  \eqref{eqn-4.5} with the initial value $(x_0,y_0)\in V_\beta\times V_\beta$ for any $\beta>\alpha$.

Define the controlled deviation component $\tilde{Z}^{\varepsilon,\delta}$ as follows
$$\tilde{Z}^{\varepsilon,\delta}_t=\frac{\bar{X}^{\varepsilon,\delta}_t-\bar{X}_t}{\sqrt\varepsilon h(\varepsilon)},\quad \tilde{Z}^{\varepsilon,\delta}_0=0,\quad t\in[0,T],$$
where $\bar{X}$ denotes the solution to the deterministic averaged equation \eqref{eq-2}.
It follows that the controlled deviation component $\tilde{Z}^{\varepsilon,\delta}$ satisfies the following SPDE,
\begin{equation}\label{eqn-4.6}
	\begin{cases}
		\mathrm{d}\tilde{Z}^{\varepsilon,\delta}_t
		=\frac{\big(A(\bar{X}^{\varepsilon,\delta}_t-\bar{X}_t)+b(\bar{X}^{\varepsilon,\delta}_t,\bar{Y}^{\varepsilon,\delta}_t)-\bar{b}(\bar{X}_t)\big)}{\sqrt{\varepsilon}h(\varepsilon)}\mathrm{d}t
		+g(\bar{X}^{\varepsilon,\delta}_t)\mathrm{d}u^\varepsilon_t +\frac{g(\bar{X}^{\varepsilon,\delta})}{h(\varepsilon)}\mathrm{d}B^H_t,\\
		\tilde{Z}^{\varepsilon,\delta}_0=0,~t\in[0,T].
	\end{cases}
\end{equation}
Note that $\tilde{Z}^{\varepsilon,\delta}=\tilde{\mathcal{G}}^{\varepsilon,\delta}\big(\frac{B^H}{h(\varepsilon)}+u^\varepsilon,\frac{W}{h(\varepsilon)}+v^\varepsilon\big)$.

\begin{lemma}\label{lem6.1}
	Suppose that Assumptions \textnormal{\textbf{(A1)}-\textbf{(A3)}}, \textnormal{\textbf{(A6)}} and \textnormal{\textbf{(H2)}} hold. Let $N\in\mathbb{N}$. Then, there exists a constant $C>0$, such that for any $p\geq 1$ and  $(u^\varepsilon,v^\varepsilon)\in\mathcal{A}_b^N$, we have
	$$\mathbb{E}\big[\|\bar{X}^{\varepsilon,\delta}\|_{\alpha,\infty}^p\big]\leq C.$$
	Here, $C$ is a constant which only depends on $p$, $T$ and $N$.
\end{lemma}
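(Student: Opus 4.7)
The plan is to mirror the argument of Lemma \ref{lem4.3} line by line, with the only essential change being the bookkeeping of the factor $\sqrt\varepsilon h(\varepsilon)$ that appears in front of the control term in the controlled system \eqref{eqn-4.5}. Since Assumption \textbf{(H2)} guarantees $\sqrt\varepsilon h(\varepsilon)\to 0$ as $\varepsilon\to 0$, we may assume without loss of generality that $\sqrt\varepsilon h(\varepsilon)\le 1$ for $\varepsilon$ small enough, so this prefactor is harmless.

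First I would introduce the same exponentially weighted norms $\|f\|_{\rho,T}:=\sup_{t\in[0,T]}e^{-\rho t}\|f(t)\|$ and $\|f\|_{1,\rho,T}:=\sup_{t\in[0,T]}e^{-\rho t}\int_0^t\frac{\|f(t)-f(r)\|}{(t-r)^{\alpha+1}}\mathrm{d}r$, and write the mild formulation of \eqref{eqn-4.5} as the sum of four terms: the semigroup acting on the initial datum $X_0\in V_\beta$, the drift integral $\int_0^t S_{t-s}b(\bar{X}^{\varepsilon,\delta}_s,\bar{Y}^{\varepsilon,\delta}_s)\mathrm{d}s$, the Young-type control integral $\sqrt\varepsilon h(\varepsilon)\int_0^t S_{t-s}g(\bar{X}^{\varepsilon,\delta}_s)\mathrm{d}u^\varepsilon_s$, and the pathwise stochastic integral $\sqrt\varepsilon\int_0^t S_{t-s}g(\bar{X}^{\varepsilon,\delta}_s)\mathrm{d}B^H_s$. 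Each of the four terms is estimated in exactly the same fashion as $I_1,I_2,I_3$ and their counterparts $H_1,\ldots,H_4$ in Lemma \ref{lem4.3}: the drift is handled via Assumption \textbf{(A6)}, the $B^H$-integral via Remark \ref{re2.1}, Assumption \textbf{(A3)}, and the semigroup estimates \eqref{eqn-3.1}--\eqref{eqn-3.3}, while the $u^\varepsilon$-integral is handled via Lemma \ref{lem4.2} together with the bound $\Lambda^{0,T}_{\alpha,u^\varepsilon}\le C\|u^\varepsilon\|_{\mathcal{H}^H}$.

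The key point in carrying out both the $\|\cdot\|_{\rho,T}$ and $\|\cdot\|_{1,\rho,T}$ estimates is the use of Lemma \ref{lem2.2} to extract a small factor $\rho^{\alpha-1}$ (respectively $\rho^{2\alpha-1}$) in front of $\|\bar{X}^{\varepsilon,\delta}\|_{\rho,T}$ and $\rho^{-1}$ (respectively $\rho^{\alpha-1}$) in front of $\|\bar{X}^{\varepsilon,\delta}\|_{1,\rho,T}$. Setting $\Lambda:=\Lambda^{0,T}_{\alpha,B^H}\vee 1$, these estimates would yield the system of inequalities
\begin{align*}
\|\bar{X}^{\varepsilon,\delta}\|_{\rho,T}&\le C\big(\Lambda+\sqrt\varepsilon h(\varepsilon)\|u^\varepsilon\|_{\mathcal{H}^H}\big)\big(1+\rho^{\alpha-1}\|\bar{X}^{\varepsilon,\delta}\|_{\rho,T}+\rho^{-1}\|\bar{X}^{\varepsilon,\delta}\|_{1,\rho,T}\big),\\
\|\bar{X}^{\varepsilon,\delta}\|_{1,\rho,T}&\le C\big(\Lambda+\sqrt\varepsilon h(\varepsilon)\|u^\varepsilon\|_{\mathcal{H}^H}\big)\big(1+\rho^{2\alpha-1}\|\bar{X}^{\varepsilon,\delta}\|_{\rho,T}+\rho^{\alpha-1}\|\bar{X}^{\varepsilon,\delta}\|_{1,\rho,T}\big),
\end{align*}
which are identical in structure to \eqref{eqn-3.16} and \eqref{eqn-3.25} up to the factor $\sqrt\varepsilon h(\varepsilon)$. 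Choosing $\rho=\big(4C(\Lambda+\sqrt\varepsilon h(\varepsilon)\|u^\varepsilon\|_{\mathcal{H}^H})\big)^{1/(1-\alpha)}$ then allows us to close the estimates and deduce
\[
\|\bar{X}^{\varepsilon,\delta}\|_{\alpha,\infty}\le C\exp\!\big(C(\Lambda+\sqrt\varepsilon h(\varepsilon)\|u^\varepsilon\|_{\mathcal{H}^H})^{1/(1-\alpha)}\big)\big(\Lambda+\sqrt\varepsilon h(\varepsilon)\|u^\varepsilon\|_{\mathcal{H}^H}\big)^{1/(1-\alpha)}.
\]

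Finally, to conclude the $L^p$ bound, I would use that $\frac{1}{1-\alpha}<2$ (since $\alpha<\frac12$), $(u^\varepsilon,v^\varepsilon)\in\mathcal{A}_b^N$ yields $\|u^\varepsilon\|_{\mathcal{H}^H}\le\sqrt{2N}$ a.s., and $\sqrt\varepsilon h(\varepsilon)$ is uniformly bounded in $\varepsilon$ by \textbf{(H2)}, while Fernique's theorem (cf. \cite{Prato2014}) controls the exponential moment $\mathbb{E}[\exp(C\Lambda^{1/(1-\alpha)})]$. The mildly non-routine point is verifying that Fernique's theorem applies, i.e.\ that $\exp(C(\Lambda^{0,T}_{\alpha,B^H})^{1/(1-\alpha)})$ is integrable; this is already tacitly used in Lemma \ref{lem4.3} and follows because $\Lambda^{0,T}_{\alpha,B^H}$ has Gaussian tails and $\frac{1}{1-\alpha}<2$. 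Overall, the only genuine novelty relative to Lemma \ref{lem4.3} is the presence of the $\sqrt\varepsilon h(\varepsilon)$ coefficient, which by \textbf{(H2)} only helps, and so I do not expect any step to be a serious obstacle.
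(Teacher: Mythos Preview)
Your proposal is correct and takes essentially the same approach as the paper. The paper's own proof of Lemma \ref{lem6.1} is a one-line reference back to Lemma \ref{lem4.3}, noting only that $\varepsilon\in(0,1)$ and $\sqrt\varepsilon h(\varepsilon)\in(0,1)$ by Assumption \textbf{(H2)}; your write-up spells out exactly this reduction, tracking the extra factor $\sqrt\varepsilon h(\varepsilon)$ through the estimates and observing that it only helps.
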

\begin{proof}
	Under the Assumptions  \textbf{(A1)}-\textbf{(A3)} and \textbf{(A6)}, with $\varepsilon\in(0,1)$ and $\sqrt\varepsilon h(\varepsilon )\in(0,1)$ from Assumption \textbf{(H2)}, a proof similar to that of Lemma \ref{lem4.3} in Section \ref{4} can be used to prove Lemma \ref{lem6.1}.
\end{proof}

\begin{remark}\label{re6.1}
	From Lemma  \ref{lem6.1}, it follows that the solution $\bar{X}$ to the deterministic averaged  equation \eqref{eq-2} is bounded, i.e., for any $p\geq1$,  we have
	\begin{align*}
		\|\bar{X}\|^p_{\alpha,\infty}\leq C. 
	\end{align*}
	Here, $C$ is a positive constant which only depends on $p$ and $T$.
\end{remark}

\begin{lemma}\label{lem6.2}
	Suppose that Assumptions \textnormal{\textbf{(A1)}-\textbf{(A3)}}, \textnormal{\textbf{(A6)}} and \textnormal{\textbf{(H2)}} hold. Let $N\in\mathbb{N}$. Then, there exists a constant $C>0$, such that for every $(u^\varepsilon,v^\varepsilon)\in\mathcal{A}_b^N$, $\beta\in(\frac12,1-\alpha)$  and $0\leq s\leq  t\leq T$, we have 
	$$\mathbb{E}\big[\|\bar{X}^{\varepsilon,\delta}_t-\bar{X}^{\varepsilon,\delta}_s\|^2\big]\leq C(t-s)^{2\beta}.$$
	Here, $C$ is a constant which only depends on $T$ and $N$.
\end{lemma}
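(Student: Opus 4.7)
The plan is to mirror the proof of Lemma \ref{lem4.4} closely, since the controlled system \eqref{eqn-4.5} differs from \eqref{eqn-4.9} only in that the control $u^\varepsilon$ enters with prefactor $\sqrt{\varepsilon}h(\varepsilon)$ (rather than $1$) and $v^\varepsilon$ enters the fast equation with prefactor $\frac{h(\varepsilon)}{\sqrt{\delta}}$ (rather than $\frac{1}{\sqrt{\delta\varepsilon}}$). Crucially, Assumption \textbf{(H2)} guarantees $\sqrt{\varepsilon}h(\varepsilon)\in(0,1)$ for all $\varepsilon$ small enough, so that the control-driven term in the slow equation carries a coefficient bounded uniformly in $\varepsilon$. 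The a priori bound $\mathbb{E}[\|\bar{X}^{\varepsilon,\delta}\|_{\alpha,\infty}^p]\leq C$ from Lemma \ref{lem6.1} plays the same role that Lemma \ref{lem4.3} played in the proof of Lemma \ref{lem4.4}.

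Concretely, I would write $\bar{X}^{\varepsilon,\delta}_t-\bar{X}^{\varepsilon,\delta}_s$ via the mild formulation as the sum of four contributions: (i) $(S_t-S_s)X_0$, (ii) the drift $\int_0^t S_{t-r}b\,\mathrm{d}r-\int_0^s S_{s-r}b\,\mathrm{d}r$, (iii) the FBM term $\sqrt{\varepsilon}\bigl[\int_0^t S_{t-r}g\,\mathrm{d}B^H_r-\int_0^s S_{s-r}g\,\mathrm{d}B^H_r\bigr]$, and (iv) the control term $\sqrt{\varepsilon}h(\varepsilon)\bigl[\int_0^t S_{t-r}g\,\mathrm{d}u^\varepsilon_r-\int_0^s S_{s-r}g\,\mathrm{d}u^\varepsilon_r\bigr]$. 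Term (i) is bounded by $C(t-s)^\beta\|X_0\|_{V_\beta}$ via \eqref{eqn-3.2}; term (ii) by $C(1+\|\bar{X}^{\varepsilon,\delta}\|_{\alpha,\infty})((t-s)+(t-s)^\beta)$ using \eqref{eqn-3.1}--\eqref{eqn-3.2} together with Assumption \textbf{(A6)}, exactly as for $B_1$ in Lemma \ref{lem4.4}; term (iii) via Remark \ref{re2.1}, Assumption \textbf{(A3)}, \eqref{eqn-3.1}--\eqref{eqn-3.4} and Lemma \ref{lem2.1}, yielding the bound $C\Lambda(1+\|\bar{X}^{\varepsilon,\delta}\|_{\alpha,\infty})((t-s)^{1-\alpha}+(t-s)^\beta+(t-s))$ as for $B_2$; and term (iv) by first invoking Lemma \ref{lem4.2} to pass from $u^\varepsilon$ integrals to $\Lambda^{0,t}_{\alpha,u^\varepsilon}$ multiplied by the same geometric factors, then using $\Lambda^{0,T}_{\alpha,u^\varepsilon}\leq C\|u^\varepsilon\|_{\mathcal{H}^H}$ together with $\sqrt{\varepsilon}h(\varepsilon)\leq 1$.

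Taking second moments, applying H\"older's inequality, and using $(u^\varepsilon,v^\varepsilon)\in\mathcal{A}_b^N$ (which gives $\|u^\varepsilon\|_{\mathcal{H}^H}^2\leq 2N$, $\mathbb{P}$-a.s.) together with the integrability of $\Lambda$ from Fernique's theorem and Lemma \ref{lem6.1}, all four contributions can be bounded in $L^2(\Omega)$ by $C|t-s|^{2\beta}$ provided $0<t-s<1$ and $\beta\in(\tfrac12,1-\alpha)$ (so that $\beta\leq 1-\alpha\leq 1$ and $\beta\leq 1$). Since larger increments $t-s\geq 1$ are controlled trivially by the uniform bound on $\|\bar{X}^{\varepsilon,\delta}\|_{\alpha,\infty}$ from Lemma \ref{lem6.1}, the desired estimate follows.

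The only step requiring care is the estimation of the control term (iv): one must verify that the prefactor $\sqrt{\varepsilon}h(\varepsilon)$, together with the $\mathcal{H}^H$-norm of $u^\varepsilon$ coming out of Lemma \ref{lem4.2}, combines into an $\varepsilon$-uniform constant. This is precisely where Assumption \textbf{(H2)} is essential; otherwise one obtains a blow-up in $\varepsilon$. Every other piece is a direct transcription of the estimates already carried out in Lemma \ref{lem4.4}, so no new analytic difficulty arises.
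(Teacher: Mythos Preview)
Your proposal is correct and matches the paper's own proof, which simply states that under $\varepsilon\in(0,1)$ and $\sqrt{\varepsilon}h(\varepsilon)\in(0,1)$ (from Assumption \textbf{(H2)}), the argument of Lemma~\ref{lem4.4} goes through verbatim with Lemma~\ref{lem6.1} in place of Lemma~\ref{lem4.3}. Your identification of the role of \textbf{(H2)} in controlling the prefactor on the $u^\varepsilon$-integral is exactly the point.
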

\begin{proof}
	Under the Assumptions  \textnormal{\textbf{(A1)}-\textbf{(A3)}} and \textnormal{\textbf{(A6)}}, with $\varepsilon\in(0,1)$ and $\sqrt\varepsilon h(\varepsilon)\in(0,1)$ from Assumption \textbf{(H2)}, a proof similar to that of Lemma \ref{lem4.4} in Section \ref{4} can be used to prove Lemma \ref{lem6.2}. 
\end{proof}

The skeleton equation is defined as follows
\begin{align}
	\mathrm{d}\bar{Z}_t=\big(A+D\bar{b}(\bar{X}_t)\big)\bar{Z}_t\mathrm{d}t+ g(\bar{X}_t)\mathrm{d}u_t,\quad\bar{Z}_0=0, \quad t\in[0,T].\label{eqn-4.3}
\end{align} 
According to the Remark \ref{re6.1}, Assumption \textbf{(H1)} implies that there exists a constant $C>0$ independent of $t$ and $z$, such that $\|D\bar{b}(\bar{X}_t)z\|\leq C\|z\|$ for any $t\in[0,T]$ and $z\in V$. Hence, according to \cite[Theorem 3.5]{Pei2020}, there exists a unique pathwise mild solution $\bar{Z}\in W^{\alpha,\infty}([0,T],V)$ to the skeleton equation \eqref{eqn-4.3} for any $(u,v)\in \mathcal{S}_N$. We define a map
$
\tilde{\mathcal{G}}^0 : \mathcal{H}\to C([0,T],V)$ 
by 
\begin{equation}\label{eqn70}
\bar{Z}=\tilde{\mathcal{G}}^0(u,v).	
\end{equation}
Here, the solution $\tilde{\mathcal{G}}^0(u,v)$ is independent of $v$.

\begin{lemma}\label{lem6.3}
	Suppose that Assumptions \textnormal{\textbf{(A1)}}-\textnormal{\textbf{(A6)}} and \textnormal{\textbf{(H1)}} hold. Let $N\in\mathbb{N}$. Then, there exists a constant $C>0$, such that  for any $p\geq 1$ and $(u,v)\in \mathcal{S}_N$, we have
	$$\sup_{t\in[0,T]}\|\bar{Z}_t\|^p\leq C.$$
	Here, $C$ is a constant which only depends on $p$, $T$ and $N$.
\end{lemma}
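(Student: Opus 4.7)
The plan is to bound $\bar{Z}$ pathwise, exploiting the fact that the skeleton equation \eqref{eqn-4.3} is \emph{linear} in $\bar{Z}$ and that its coefficients depend only on the deterministic averaged path $\bar{X}$, for which Remark \ref{re6.1} provides a uniform bound in the $\|\cdot\|_{\alpha,\infty}$-norm. Writing the mild formulation
$$\bar{Z}_t \;=\; \int_0^t S_{t-s}D\bar{b}(\bar{X}_s)\bar{Z}_s\,\mathrm{d}s \;+\; \int_0^t S_{t-s}g(\bar{X}_s)\,\mathrm{d}u_s,$$
I would control the two pieces separately: the first is a linear semigroup convolution to which Gronwall applies, while the second is a pathwise generalized Riemann--Stieltjes integral that can be estimated via Lemma \ref{lem4.2} together with $(u,v)\in\mathcal{S}_N$, which forces $\Lambda^{0,T}_{\alpha,u}\leq C\|u\|_{\mathcal{H}^H}\leq C\sqrt{2N}$ by Lemma \ref{lem4.1}.

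For the first term, the comment immediately following \eqref{eqn-4.3} together with Assumption \textbf{(H1)} yields $\sup_{t\in[0,T]}\|D\bar{b}(\bar{X}_t)\|_{L(V)}\leq C$. Using \eqref{eqn-3.1} with $\gamma=\varsigma=0$ gives
$$\Big\|\int_0^t S_{t-s}D\bar{b}(\bar{X}_s)\bar{Z}_s\,\mathrm{d}s\Big\|\;\leq\; C\int_0^t\|\bar{Z}_s\|\,\mathrm{d}s.$$
For the second term, I would estimate $\sup_{i\in\mathbb{N}}\|S_{t-\cdot}g(\bar{X}_\cdot)e_i\|_{\alpha,1}$ uniformly in $t\in[0,T]$ by splitting exactly as in the derivation of \eqref{eqn-3.13} in the proof of Lemma \ref{lem4.3}: Assumption \textbf{(A3)} bounds $\sup_{i\in\mathbb{N}}\|g(\bar{X}_s)e_i\|$ linearly in $\|\bar{X}_s\|$, while the increments $g(\bar{X}_s)-g(\bar{X}_r)$ are controlled by those of $\bar{X}$ via the Lipschitz constant $L_g$. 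Both are absorbed by $\|\bar{X}\|_{\alpha,\infty}$, which is deterministic and finite by Remark \ref{re6.1}. Combining with the semigroup estimates \eqref{eqn-3.1}, \eqref{eqn-3.3}, Lemma \ref{lem2.1} and Lemma \ref{lem4.2} yields
$$\Big\|\int_0^tS_{t-s}g(\bar{X}_s)\,\mathrm{d}u_s\Big\|\;\leq\; C,$$
uniformly in $t\in[0,T]$. Summing these two estimates and applying Gronwall's inequality to $\|\bar{Z}_t\|\leq C+C\int_0^t\|\bar{Z}_s\|\,\mathrm{d}s$ produces the deterministic bound $\sup_{t\in[0,T]}\|\bar{Z}_t\|\leq C(p,T,N)$, from which passage to the $p$-th power is trivial.

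The main technical obstacle lies in estimating the $W^{\alpha,1}$-norm $\sup_{i\in\mathbb{N}}\|S_{t-\cdot}g(\bar{X}_\cdot)e_i\|_{\alpha,1}$, in particular the double-integral term coming from the Weyl-derivative representation, and in doing so uniformly in $t\in[0,T]$. In contrast to Lemma \ref{lem4.3}, however, the integrand here does not depend on the unknown $\bar{Z}$, so there is no need to introduce exponentially-weighted norms $\|\cdot\|_{\rho,T}$ and $\|\cdot\|_{1,\rho,T}$; the required bounds on $\|\bar{X}\|$ and its H\"older-type seminorms are directly furnished by Remark \ref{re6.1}, and the argument reduces to bookkeeping parallel to the treatment of $A_1$ and $A_2$ in Lemma \ref{lem4.3}, with the additional simplification that the resulting bounds are deterministic rather than stochastic.
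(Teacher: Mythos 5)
Your proposal is correct and follows essentially the same route as the paper: the same mild-formulation split into the linear drift convolution (bounded via \textbf{(H1)} and Remark \ref{re6.1}) and the pathwise control integral (bounded via the \eqref{eqn-3.13}-type estimate together with $\Lambda^{0,T}_{\alpha,u}\leq C\|u\|_{\mathcal{H}^H}$), followed by Gronwall. The only cosmetic difference is that the paper runs Gronwall directly on $\|\bar{Z}_t\|^p$ after an application of H\"older's inequality, whereas you run it on $\|\bar{Z}_t\|$ and raise to the $p$-th power at the end.
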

\begin{proof}
	
	According to Assumptions \textbf{(A3)} and \textbf{(H1)} and $(u,v)\in\mathcal{S}_N$, using Remark \ref{re6.1}  and H\"{o}lder's inequality, we obtained
	$$
	\begin{aligned}
		\|\bar{Z}_t\|^p
		\leq & C\Big\|\int_0^t S_{t-s} D\bar{b}(\bar{X}_s)\bar{Z}_s\mathrm{d}s\Big\|^p
		+C\Big\|\int_0^t S_{t-s} g(\bar{X}_s)\mathrm{d}u_s\Big\|^p\notag\\
		\leq & C \int_0^t \|D\bar{b}(\bar{X}_s)\|^p\|\bar{Z}_s\|^p\mathrm{d}s\notag\\
		&+ C\|u\|_{\mathcal{H}^H}^p \bigg(\int_0^t\Big(
		\big(1+\|\bar{X}_s\|\big)\big(s^{-\alpha}+ (t-s)^{-\alpha}\big)+\int_0^s\frac{\|\bar{X}_s-\bar{X}_r\|}{(s-r)^{\alpha+1}}\mathrm{d}r\Big)\mathrm{d}s\bigg)^p\notag\\
		\leq &C\big(1+\|\bar{X}\|_{\alpha,\infty}^p\big)\Big(\int_0^t\|\bar{Z}_s\|^p\mathrm{d}s+\Big(\int_0^t(1+s^{-\alpha}+ (t-s)^{-\alpha}\big)\mathrm{d}s\Big)^p\Big)\notag\\
		\leq &C \int_0^t \|\bar{Z}_s\|^p \mathrm{d}s +C. \label{eqn-4.8}
	\end{aligned}
	$$
	In the second step, we obtain the result from  \eqref{eqn-3.13} in Section \ref{4} and  $\Lambda^{0,T}_{\alpha,u}\leq C\|u\|_{\mathcal{H}^H}$.

	By applying Gronwall's inequality and taking the supremum on both sides over 
	$t\in[0,T]$, we have
	$$\sup_{t\in[0,T]}\|\bar{Z}_t\|^p\leq C.$$
\end{proof}

\begin{lemma}\label{lem6.4}
	Suppose that Assumptions \textnormal{\textbf{(A1)}}-\textnormal{\textbf{(A6)}} and \textnormal{\textbf{(H1)}} hold. Let $N\in\mathbb{N}$. Then, there exists a constant $C>0$, such that for any  $(u,v)\in \mathcal{S}_N$,  $\beta\in(\frac12,1-\alpha)$  and  $0\leq s\leq t\leq T$, we have
	$$\|\bar{Z}_t-\bar{Z}_s\|^2 \leq C(t-s)^{ 2\beta }.$$
	Here, $C$ is a constant which only depends on $T$ and $N$.
\end{lemma}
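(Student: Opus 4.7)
The plan is to work directly with the mild formulation of the skeleton equation \eqref{eqn-4.3},
\[
\bar{Z}_t=\int_0^t S_{t-r}D\bar{b}(\bar{X}_r)\bar{Z}_r\,\mathrm{d}r+\int_0^t S_{t-r}g(\bar{X}_r)\,\mathrm{d}u_r,
\]
and to estimate $\bar{Z}_t-\bar{Z}_s$ by splitting each integral into a ``short-time'' piece on $[s,t]$ and a ``semigroup-difference'' piece on $[0,s]$, exactly in the spirit of the proof of Lemma \ref{lem4.4}. Since the forcing $u$ plays the role of the FBM $B^H$ in that earlier argument, I expect to reuse the estimates \eqref{eqn-4.50}--\eqref{eqn-4.51} of Lemma \ref{lem4.3} almost verbatim, the main simplification being that there is no stochastic integral against $B^H$ here and hence no expectation is needed: the pathwise bound $\Lambda^{0,T}_{\alpha,u}\le C\|u\|_{\mathcal{H}^H}\le C\sqrt{2N}$ coming from $(u,v)\in\mathcal{S}_N$ supplies a deterministic constant.

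For the drift part, I would use Assumption \textbf{(H1)} and Remark \ref{re6.1} to get $\|D\bar{b}(\bar{X}_r)\|\le C$, so that $\|D\bar{b}(\bar{X}_r)\bar{Z}_r\|\le C\|\bar{Z}_r\|\le C$ by Lemma \ref{lem6.3}. Then the bounds \eqref{eqn-3.1} and \eqref{eqn-3.2} give, mimicking the estimate of $B_1$ in \eqref{eqn-3.29},
\[
\Big\|\int_s^t S_{t-r}D\bar{b}(\bar{X}_r)\bar{Z}_r\,\mathrm{d}r+\int_0^s(S_{t-r}-S_{s-r})D\bar{b}(\bar{X}_r)\bar{Z}_r\,\mathrm{d}r\Big\|\le C\bigl((t-s)+(t-s)^{\beta}\bigr).
\]

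For the control part, I would invoke Lemma \ref{lem4.2} together with Assumption \textbf{(A3)} and the semigroup estimates \eqref{eqn-3.1}--\eqref{eqn-3.3}, repeating the computation that produced \eqref{eqn-3.30} for $B_2$. Using $(u,v)\in\mathcal{S}_N$ and the uniform bound on $\|\bar{X}\|_{\alpha,\infty}$ from Remark \ref{re6.1}, this yields, for any $\gamma\in(\tfrac12,1-\alpha)$,
\[
\Big\|\int_s^t S_{t-r}g(\bar{X}_r)\,\mathrm{d}u_r+\int_0^s(S_{t-r}-S_{s-r})g(\bar{X}_r)\,\mathrm{d}u_r\Big\|\le C\Lambda^{0,T}_{\alpha,u}\bigl((t-s)^{1-\alpha}+(t-s)^{\gamma}+(t-s)\bigr).
\]

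Combining the two displays, choosing $\gamma=\beta$, and using $0<t-s\le T$ together with $\beta<1-\alpha$, all powers of $(t-s)$ that appear are at least $\beta$, so every term is dominated by $C(t-s)^{\beta}$. Squaring gives the desired $\|\bar{Z}_t-\bar{Z}_s\|^2\le C(t-s)^{2\beta}$ with $C=C(T,N)$. The only mildly delicate point in the execution is handling the double-integral remainders coming from $\int_s^r\|\bar{Z}_r-\bar{Z}_q\|/(r-q)^{\alpha+1}\,\mathrm{d}q$, but because $\bar{Z}\in W^{\alpha,\infty}([0,T],V)$ and its seminorm is uniformly controlled on $\mathcal{S}_N$ (as in the proof of Lemma \ref{lem6.3}), these terms absorb into the same $(t-s)^{\beta}$ behaviour; I view this as the main technical—though routine—step in the proof.
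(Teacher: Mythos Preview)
Your proposal is correct and follows essentially the same route as the paper: split the mild formula into drift and control parts, each into a short-time piece on $[s,t]$ and a semigroup-difference piece on $[0,s]$, then apply \eqref{eqn-4.50}--\eqref{eqn-4.51} together with Remark~\ref{re6.1} and Lemma~\ref{lem6.3}. One small slip: the double-integral remainders that arise from the control term involve increments of $\bar{X}$ (since $g$ is evaluated at $\bar{X}$), not of $\bar{Z}$, so they are bounded directly by $\|\bar{X}\|_{\alpha,\infty}$ via Remark~\ref{re6.1}; no $W^{\alpha,\infty}$ control on $\bar{Z}$ is needed, and there is no delicate step left.
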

\begin{proof}
	From \eqref{eqn-4.50} and \eqref{eqn-4.51} in Section \ref{4}, we obtain
	\begin{align}
		\|\bar{Z}_t-\bar{Z}_s\|
		\leq & \Big\|\int_{s}^{t}S_{t-r} D\bar{b}(\bar{X}_r)\bar{Z}_r\mathrm{d}r\Big\|
		+\Big\|\int_0^s(S_{t-r}-S_{s-r})D\bar{b}(\bar{X}_r) \bar{Z}_r\mathrm{d}r\Big\|\notag\\
		&+\Big\|\int_s^tS_{t-r} g(\bar{X}_r)\mathrm{d}u_r\Big\|
		+\Big\|\int_0^s(S_{t-r}-S_{s-r}) g(\bar{X}_r)\mathrm{d}u_r\Big\|\notag\\
		\leq & C\big(1+\sup_{t\in[0,T]}\|\bar{X}_t\|\big)\Big(\int_s^t\|\bar{Z}_r\|\mathrm{d}r
		+(t-s)^\beta\int_0^s (s-r)^{-\beta}\|\bar{Z}_r\|\mathrm{d}r\Big)\notag\\
		&+C\Lambda^{0,t}_{\alpha,u} \int_s^t  \Big(\big(1+\|\bar{X}_r\|\big)\big((r-s)^{-\alpha}+(t-r)^{-\alpha}\big)+ \int_s^r \frac{\|\bar{X}_r-\bar{X}_q\|}{(r-q)^{\alpha+1}}\mathrm{d}q\Big)\mathrm{d}r\notag\\
		&+C\Lambda^{0,s}_{\alpha,u}(t-s)^{\beta}  \int_0^s  \big(1+\|\bar{X}_r\|\big)({(s-r)^{-\beta}r^{-\alpha}+(s-r)^{-\alpha-\beta}})\mathrm{d}r\notag\\
		&+ C\Lambda^{0,s}_{\alpha,u}(t-s)^{\beta} \int_0^s  (s-r)^{-\beta}\int_0^r \frac{\|\bar{X}_r-\bar{X}_q\|}{(r-q)^{\alpha+1}}\mathrm{d}q\mathrm{d}r
		\notag\\
		\leq & C\big(1+\|\bar{X}\|_{\alpha,\infty}\big)\big((t-s)^\beta+(t-s)\big) \sup_{0\leq r\leq t}\|\bar{Z}_r\|\notag\\
		&+C\|u\|_{\mathcal{H}^H}\big(1+\|\bar{X}\|_{\alpha,\infty}\big)\big((t-s)^{1-\alpha}+(t-s)^\beta+(t-s)\big)\notag\\
		\leq & C\big((t-s)^{1-\alpha}+(t-s)^\beta+(t-s)\big)\big(1+\sup_{0\leq r\leq t}\|\bar{Z}_r\|\big), \label{eqn-6.9}
	\end{align}
	where, the third inequality follows from $\Lambda^{0,T}_{\alpha,u}\leq C\|u\|_{\mathcal{H}^H}$, and the final step is derived from Remark \ref{re6.1}. 
	Without loss of generality, assume $0<t-s<1$. Substituting the conclusion of Lemma \ref{lem6.3} into equation \eqref{eqn-6.9}, from   $\beta\in\left(\frac12,1-\alpha\right)$, we have
	$$\|\bar{Z}_t-\bar{Z}_s\|^2\leq C(t-s)^{2\beta }.$$
\end{proof}

\begin{lemma}\label{lem6.5}
	Suppose that Assumptions \textnormal{\textbf{(A1)}}-\textnormal{\textbf{(A7)}} and  \textnormal{\textbf{(H2)}}  hold. Let $N\in\mathbb{N}$.
	%	????? $\varepsilon_0\in(0,1]$, ????? $\varepsilon \in(0,\varepsilon_0]$, $\sqrt{\delta}h(\varepsilon)<\bar{\lambda}_1+\beta_1$. 
	Then, there exists a constant $C>0$, such that for any $(u^\varepsilon,v^\varepsilon)\in\mathcal{A}_b^N$, we have
	$$\int_0^T\mathbb{E}\big[\|\bar{Y}^{\varepsilon,\delta}_t\|^2\big]\mathrm{d}t\leq C.$$
	Here, $C$ is a constant which only depends on $T$ and $N$.
\end{lemma}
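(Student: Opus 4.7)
The plan is to mirror the strategy of Lemma \ref{lem4.6}, adapting it to the MDP scaling $h(\varepsilon)/\sqrt{\delta}$ that replaces the LDP scaling $1/\sqrt{\delta\varepsilon}$ on the control term. First, I would apply It\^{o}'s formula to $\|\bar{Y}^{\varepsilon,\delta}_t\|^2$ using the controlled fast equation in \eqref{eqn-4.5}. Invoking the spectral bound $\langle y, Ay \rangle \leq -\bar{\lambda}_1\|y\|^2$, the dissipativity of $F$ from Assumption \textnormal{\textbf{(A4)}}, and crucially Assumption \textnormal{\textbf{(A6)}} (which gives $\|G(x,y)\|_{HS}^2 \leq C(1+\|x\|^2)$ \emph{independently of} $y$), followed by Young's inequality on the pairing $\langle \bar{Y}^{\varepsilon,\delta}_t, G(\bar{X}^{\varepsilon,\delta}_t,\bar{Y}^{\varepsilon,\delta}_t)(v^\varepsilon_t)'\rangle$, I expect to obtain
$$\mathrm{d}\|\bar{Y}^{\varepsilon,\delta}_t\|^2 \leq -\frac{\kappa'}{\delta}\|\bar{Y}^{\varepsilon,\delta}_t\|^2\mathrm{d}t + \frac{C}{\delta}\bigl(1+\|\bar{X}^{\varepsilon,\delta}_t\|^2\bigr)\mathrm{d}t + Ch(\varepsilon)^2\bigl(1+\|\bar{X}^{\varepsilon,\delta}_t\|^2\bigr)\|(v^\varepsilon_t)'\|_2^2\,\mathrm{d}t + \mathrm{d}M_t,$$
for some $\kappa'>0$, where $M_t$ is the It\^{o} stochastic integral driven by $W$. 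The central point is that Assumption \textnormal{\textbf{(A6)}} prevents a $\|\bar{Y}^{\varepsilon,\delta}\|^2$ factor from entering the control-term bound, which would otherwise create a Gronwall-type feedback impossible to close because $h(\varepsilon)\to\infty$.

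After a standard localization (using finite-moment estimates for $\bar{Y}^{\varepsilon,\delta}$ in the spirit of Lemma \ref{lem4.5}) to kill the martingale in expectation, the scalar comparison principle gives
$$\mathbb{E}\bigl[\|\bar{Y}^{\varepsilon,\delta}_t\|^2\bigr] \leq Ce^{-\kappa' t/\delta}+\int_0^t e^{-\kappa'(t-s)/\delta}\Big(\tfrac{C}{\delta}\mathbb{E}[1+\|\bar{X}^{\varepsilon,\delta}_s\|^2] + Ch(\varepsilon)^2\mathbb{E}\bigl[(1+\|\bar{X}^{\varepsilon,\delta}_s\|^2)\|(v^\varepsilon_s)'\|_2^2\bigr]\Big)\mathrm{d}s.$$
Integrating in $t$ over $[0,T]$ and applying Fubini's theorem, the exponential kernel contributes a factor $\delta/\kappa'$, and I obtain
$$\int_0^T \mathbb{E}[\|\bar{Y}^{\varepsilon,\delta}_t\|^2]\mathrm{d}t \leq \frac{C}{\kappa'}\int_0^T\bigl(1+\mathbb{E}[\|\bar{X}^{\varepsilon,\delta}_s\|^2]\bigr)\mathrm{d}s + \frac{C\delta h(\varepsilon)^2}{\kappa'}\mathbb{E}\Big[\bigl(1+\|\bar{X}^{\varepsilon,\delta}\|_\infty^2\bigr)\int_0^T\|(v^\varepsilon_s)'\|_2^2\mathrm{d}s\Big].$$

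The first term on the right is bounded by Lemma \ref{lem6.1}. For the second, the definition of $\mathcal{A}_b^N$ yields $\int_0^T\|(v^\varepsilon_s)'\|_2^2\mathrm{d}s\leq 2N$ almost surely, and Lemma \ref{lem6.1} provides $\mathbb{E}[\|\bar{X}^{\varepsilon,\delta}\|_\infty^2]\leq C$. The main (and only) obstacle is therefore the prefactor $\delta h(\varepsilon)^2$: whereas in Lemma \ref{lem4.6} the analogous prefactor is $\delta/\varepsilon\to 0$ from Assumption \textnormal{\textbf{(A7)}} alone, here one must combine \textnormal{\textbf{(A7)}} ($\delta/\varepsilon\to 0$) with \textnormal{\textbf{(H2)}} ($\sqrt{\varepsilon}h(\varepsilon)\to 0$, hence $\varepsilon h(\varepsilon)^2\to 0$) via the identity $\delta h(\varepsilon)^2=(\delta/\varepsilon)\cdot(\varepsilon h(\varepsilon)^2)$ to conclude that $\delta h(\varepsilon)^2\to 0$ and is in particular uniformly bounded in $\varepsilon$. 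This yields the claimed estimate.
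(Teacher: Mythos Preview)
Your proposal is correct and follows essentially the same route as the paper: It\^{o}'s formula for $\|\bar{Y}^{\varepsilon,\delta}_t\|^2$, dissipativity from \textnormal{\textbf{(A4)}}--\textnormal{\textbf{(A5)}}, the $y$-independent bound on $G$ from \textnormal{\textbf{(A6)}}, a comparison/ODE argument, Fubini in time, and Lemma~\ref{lem6.1}. The only cosmetic difference is the weighting in Young's inequality---the paper uses the balanced split and imposes $\delta h^2(\varepsilon)\leq(\bar\lambda_1+\beta_1)^2$ upfront to absorb the resulting $\|\bar{Y}\|^2$-feedback, whereas you weight so as to keep a fixed dissipation $\kappa'/\delta$ and push the dependence onto the final prefactor $\delta h(\varepsilon)^2$; both rely on the same identity $\delta h(\varepsilon)^2=(\delta/\varepsilon)\cdot(\varepsilon h(\varepsilon)^2)\to 0$ from \textnormal{\textbf{(A7)}} and \textnormal{\textbf{(H2)}}.
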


\begin{proof}
	Under the Assumption  \textbf{(A7)} and  $\lim_{\varepsilon\to 0}\sqrt\varepsilon h(\varepsilon)=0$ from Assumption \textbf{(H2)}, 
	choose the parameters $\varepsilon$ and $\delta$ such that $\delta h^2(\varepsilon)\leq\left(\bar\lambda_1+\beta_1\right)^2$.
	Under the assumptions  \textnormal{\textbf{(A1)}-\textbf{(A7)}} with $\varepsilon\in(0,1)$ and $\sqrt\varepsilon h(\varepsilon)\in(0,1)$, the proof of Lemma \ref{lem6.5} follows similarly to the proof of Lemma \ref{lem4.6} in Section \ref{4}. 
\end{proof}

Now, we provide a precise statement of our second main theorem.

\begin{theorem}\label{th6.1}
	%	Let $H\in(\frac12,1)$ and $\alpha\in(1-H,\frac12)$. Assume
	Suppose that Assumptions \textnormal{\textbf{(A1)}}-\textnormal{\textbf{(A7)}} and \textnormal{\textbf{(H1)}}-\textnormal{\textbf{(H2)}} hold. Let $\varepsilon\to 0$. Then the slow component $X^{\varepsilon,\delta}$ of the slow-fast system \eqref{eqn-1.2} satisfies the MDP in $C({0,T},V)$ with speed function $b(\varepsilon)=\frac{1}{h^2(\varepsilon)}$ and the good rate function ${\tilde{I}}:C([0,T],V)\to [0,\infty]$ defined by 
	$$ \tilde{I}(\phi):=\inf_{\{(u,v)\in\mathcal{H}:~\phi=\tilde{\mathcal{G}}^0(u,v)\}}{\frac12\|(u,v)\|^2_{\mathcal{H}}}
	=\inf_{\{u\in\mathcal{H}^H:~\phi=\tilde{\mathcal{G}}^0(u,0)\}}{\frac12\|u\|^2_{\mathcal{H}^H}},$$
	where $\tilde{\mathcal{G}}^0$ is defined by \eqref{eqn70} and infimum over an empty set is taken as $+\infty$.
\end{theorem}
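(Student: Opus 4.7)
The plan is to establish Theorem \ref{th6.1} by verifying Conditions (1) and (2) of Lemma \ref{lemA.1} for the rescaled deviation process $Z^{\varepsilon,\delta}$ and the map $\tilde{\mathcal{G}}^0$ defined in \eqref{eqn70} via the linear skeleton equation \eqref{eqn-4.3}. The overall architecture mirrors the proof of Theorem \ref{th3.1}, but exploits the fact that the limiting skeleton equation is \emph{linear} in $\bar{Z}$, while the new difficulty is that the effective nonlinear term must be linearized around $\bar{X}$ with an error controlled by the factor $\tfrac{1}{\sqrt{\varepsilon}h(\varepsilon)}$.

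For Condition (1), take $(u^n,v^n)\in\mathcal{S}_N$ converging weakly to $(u,v)$. By Lemmas \ref{lem6.3} and \ref{lem6.4}, the corresponding solutions $\bar{Z}^n=\tilde{\mathcal{G}}^0(u^n,v^n)$ are uniformly bounded and Hölder equicontinuous. Subtracting the limit skeleton equation yields
\begin{equation*}
\bar{Z}^n_t-\bar{Z}_t=\int_0^t S_{t-s}D\bar{b}(\bar{X}_s)(\bar{Z}^n_s-\bar{Z}_s)\mathrm{d}s+\int_0^t S_{t-s}g(\bar{X}_s)\mathrm{d}(u^n-u)_s.
\end{equation*}
Using Assumption \textbf{(H1)} together with Remark \ref{re6.1}, $D\bar{b}(\bar{X}_\cdot)$ is uniformly bounded. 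The second integral vanishes exactly as in the proof of Condition (1) for Theorem \ref{th3.1}: applying Fubini and the representation $u'=\mathbb{K}_H'\dot{u}$ reduces the problem to weak convergence of $\dot{u}^n\rightharpoonup\dot{u}$ in $L^2([0,T],V_1)$ tested against the bounded function $r\mapsto r^{\frac12-H}\int_r^t S_{t-s}g(\bar{X}_s)Q_1^{\frac12}s^{H-\frac12}(s-r)^{H-\frac32}\mathrm{d}s$. Gronwall's inequality then closes the argument and gives compactness of $\{\tilde{\mathcal{G}}^0(u,v):(u,v)\in\mathcal{S}_N\}$ in $C([0,T],V)$.

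For Condition (2), I take $(u^\varepsilon,v^\varepsilon)\in\mathcal{A}^N_b$ converging to $(u,v)\in\mathcal{S}_N$ in distribution and show $\tilde{Z}^{\varepsilon,\delta}-\bar{Z}\to 0$. Writing $b(\bar{X}^{\varepsilon,\delta},\bar{Y}^{\varepsilon,\delta})-\bar{b}(\bar{X})=[b(\bar{X}^{\varepsilon,\delta},\bar{Y}^{\varepsilon,\delta})-\bar{b}(\bar{X}^{\varepsilon,\delta})]+[\bar{b}(\bar{X}^{\varepsilon,\delta})-\bar{b}(\bar{X})-D\bar{b}(\bar{X})(\bar{X}^{\varepsilon,\delta}-\bar{X})]+D\bar{b}(\bar{X})(\bar{X}^{\varepsilon,\delta}-\bar{X})$, I split $\tilde{Z}^{\varepsilon,\delta}_t-\bar{Z}_t$ into four pieces: (i) an averaging error $\tfrac{1}{\sqrt{\varepsilon}h(\varepsilon)}\int_0^t S_{t-s}[b(\bar{X}^{\varepsilon,\delta}_s,\bar{Y}^{\varepsilon,\delta}_s)-\bar{b}(\bar{X}^{\varepsilon,\delta}_s)]\mathrm{d}s$; (ii) the Taylor remainder $\tfrac{1}{\sqrt{\varepsilon}h(\varepsilon)}\int_0^t S_{t-s}R(\bar{X}^{\varepsilon,\delta}_s,\bar{X}_s)\mathrm{d}s$ with $\|R\|\le \tfrac{L}{2}\|\bar{X}^{\varepsilon,\delta}_s-\bar{X}_s\|^2$; (iii) the small-noise term $\tfrac{1}{h(\varepsilon)}\int_0^t S_{t-s}g(\bar{X}^{\varepsilon,\delta}_s)\mathrm{d}B^H_s$; (iv) the control term $\int_0^t S_{t-s}[g(\bar{X}^{\varepsilon,\delta}_s)-g(\bar{X}_s)]\mathrm{d}u^\varepsilon_s+\int_0^t S_{t-s}g(\bar{X}_s)\mathrm{d}(u^\varepsilon-u)_s$. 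Pieces (iii) and (iv) are bounded by Lemma \ref{lem4.2} and the Lemma \ref{lem4.3}--type estimates, noting that $\tfrac{1}{h(\varepsilon)}\to 0$ and that $\bar{X}^{\varepsilon,\delta}\to \bar{X}$ at rate $\sqrt{\varepsilon}h(\varepsilon)$ by a Lemma \ref{lem5.2}--style argument combined with (iv) of the control. Piece (ii) is then of order $\sqrt{\varepsilon}h(\varepsilon)\to 0$ by Assumption \textbf{(H2)}. Finally, piece (i) — the main obstacle — is handled by introducing the auxiliary frozen process $\hat{Y}^{\varepsilon,\delta}$ with coefficient $b(\bar{X}^{\varepsilon,\delta}_{s(\Delta)},\cdot)$ as in Section \ref{5}, and applying the Khasminskii estimate $\mathbb{E}\|\int_{k\Delta}^{(k+1)\Delta}S_{(k+1)\Delta-r}(b(\bar{X}^{\varepsilon,\delta}_{k\Delta},\hat{Y}^{\varepsilon,\delta}_r)-\bar{b}(\bar{X}^{\varepsilon,\delta}_{k\Delta}))\mathrm{d}r\|^2\le C\delta^2(\tfrac{\Delta}{\delta}+1)$; once divided by $\varepsilon h^2(\varepsilon)\Delta^2$, this is $O(\tfrac{\delta}{\varepsilon h^2(\varepsilon)\Delta})$.

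The main obstacle is the simultaneous appearance of the Khasminskii amplification factor $\tfrac{1}{\sqrt{\varepsilon}h(\varepsilon)}$ in (i). Since $\sqrt{\varepsilon}h(\varepsilon)\to 0$, dividing the raw averaging bound by $\varepsilon h^2(\varepsilon)$ demands a quantitatively sharper regime than in the LDP: I will impose the time step $\Delta=\Delta(\varepsilon,\delta)$ so that $\Delta\to 0$, $\tfrac{\delta}{\varepsilon h^2(\varepsilon)\Delta}\to 0$ and $\tfrac{\Delta^{2\zeta}}{\varepsilon h^2(\varepsilon)}\to 0$ simultaneously, which is feasible under $\tfrac{\delta}{\varepsilon}\to 0$ from \textbf{(A7)} by taking, e.g., $\Delta=(\delta/(\varepsilon h^2(\varepsilon)))^{1/2}$ and restricting $h(\varepsilon)$ to grow slowly enough relative to $\varepsilon/\delta$; this will require the additional quantitative compatibility condition $\delta h^2(\varepsilon)\to 0$ used in Lemma \ref{lem6.5}. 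Once all four pieces are controlled, Gronwall in the $\|\cdot\|_{\alpha,T}$--norm and a Portemanteau-type argument as at the end of the proof of Theorem \ref{th3.1} yield $\tilde{Z}^{\varepsilon,\delta}\to \bar{Z}=\tilde{\mathcal{G}}^0(u,v)$ in distribution, completing Condition (2) and hence the MDP.
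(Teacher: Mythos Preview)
Your verification of Condition (1) matches the paper's. For Condition (2), your approach is correct in spirit but organized differently from the paper, and one step needs more care.

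The paper does \emph{not} compare $\tilde{Z}^{\varepsilon,\delta}$ to $\bar{Z}$ directly. Instead it introduces an intermediate process $\bar{X}^\varepsilon$ solving $\mathrm{d}\bar{X}^\varepsilon_t=(A\bar{X}^\varepsilon_t+\bar{b}(\bar{X}^\varepsilon_t))\mathrm{d}t+\sqrt{\varepsilon}h(\varepsilon)g(\bar{X}^\varepsilon_t)\mathrm{d}u^\varepsilon_t+\sqrt{\varepsilon}g(\bar{X}^\varepsilon_t)\mathrm{d}B^H_t$ and splits $\tilde{Z}^{\varepsilon,\delta}=\hat{Z}^{\varepsilon,\delta}+\bar{Z}^\varepsilon$ with $\hat{Z}^{\varepsilon,\delta}=(\bar{X}^{\varepsilon,\delta}-\bar{X}^\varepsilon)/(\sqrt{\varepsilon}h(\varepsilon))$ and $\bar{Z}^\varepsilon=(\bar{X}^\varepsilon-\bar{X})/(\sqrt{\varepsilon}h(\varepsilon))$. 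Statement (a), $\hat{Z}^{\varepsilon,\delta}\to 0$ in probability, involves only the averaging error (both processes carry identical noise, so the Taylor expansion of $\bar{b}$ never enters); statement (b), $\bar{Z}^\varepsilon\to\bar{Z}$ weakly, involves only linearization and weak convergence of controls (no fast variable present). What this buys is that the paper never needs an a priori bound on $\|\tilde{Z}^{\varepsilon,\delta}\|$.

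Your one-step decomposition is viable, but your piece (ii) estimate $\|R\|/(\sqrt{\varepsilon}h(\varepsilon))\le \tfrac{L}{2}\sqrt{\varepsilon}h(\varepsilon)\|\tilde{Z}^{\varepsilon,\delta}\|^2$ presupposes $\sup_t\|\tilde{Z}^{\varepsilon,\delta}_t\|=O(1)$, which is exactly the rate $\|\bar{X}^{\varepsilon,\delta}-\bar{X}\|=O(\sqrt{\varepsilon}h(\varepsilon))$ you are trying to establish. You do gesture at this (``$\bar{X}^{\varepsilon,\delta}\to\bar{X}$ at rate $\sqrt{\varepsilon}h(\varepsilon)$ by a Lemma \ref{lem5.2}--style argument''), and indeed one can first run the Khasminskii\,+\,Lipschitz-of-$\bar{b}$ Gronwall argument (no Taylor) to get this a priori rate, and only then invoke (H1) for piece (ii). But as written your four pieces are circular; you should make that preliminary step explicit. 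Also, in piece (iv) the first half $\int S_{t-s}[g(\bar{X}^{\varepsilon,\delta}_s)-g(\bar{X}_s)]\mathrm{d}u^\varepsilon_s$ must be controlled in the $\|\cdot\|_{\alpha,T}$-norm (the integral is Riemann--Stieltjes), which requires the heavier $M_{21}$--$M_{23}$-type estimates from Section \ref{5}, not just Lipschitz of $g$ in sup-norm; and the second half $\int S_{t-s}g(\bar{X}_s)\mathrm{d}(u^\varepsilon-u)_s$ converges only in distribution, so the Portmanteau step must be invoked carefully (compare first to $\tilde{\mathcal{G}}^0(u^\varepsilon,v^\varepsilon)$ in probability, then use Condition (1)). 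With these clarifications your route also works; the paper's two-step split simply bypasses these bookkeeping issues.
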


\begin{remark}\label{round2-2}
	 Assume the diffusion matrix $g$ being symmetric with a bounded inverse. According to the skeleton equation \eqref{eqn-4.3} corresponding to the MDP, we have
	\begin{align*}
		\tilde{I}(\phi)
		=\inf_{\{u\in\mathcal{H}^H:~\phi=\tilde{\mathcal{G}}^0(u,0)\}}{\frac12\|u\|^2_{\mathcal{H}^H}}
		=\inf_{\{u\in \mathcal{H}^H:~\phi_0=0,~\phi'=\big(A+D\bar{b}(\bar{X})\big)\phi+g(\bar{X})u'
			\}} \frac12\|u\|^2_{\mathcal{H}^H}
		,
	\end{align*} 
	Given the equation for	$u$, $\phi'=\big(A+D\bar{b}(\bar{X})\big)\phi+g(\bar{X})u'$, the minimal-norm solution for $u$ is given by  
	$\tilde{u}^*=\int_0^\cdot g(\bar{X}_s)^{-1}\big(\phi'_s-(A+D\bar{b}(\bar{X}_s))\phi_s\big)\mathrm{d}s$. Therefore, it follows that
	\begin{align*}
		\tilde{I}(\phi)=&\frac12\|\tilde{u}^*\|^2_{\mathcal{H}^H}=\frac12\int_0^T \|\mathbb{K}_H^{-1}\tilde{u}^*_t\|^2_1\mathrm{d}t\\
		=&\frac{1}{2c_H^2\Gamma(\frac32-H)^2}\int_0^T
		\Big\|t^{\frac12-H}g(\bar{X}_t)^{-1}\big(\phi'_t-(A+D\bar{b}(\bar{X}_t))\phi_t\big)\\
		&+(H-\frac12)t^{H-\frac12}\int_0^t\frac{t^{\frac12-H}g(\bar{X}_t)^{-1}\big(\phi'_t-(A+D\bar{b}(\bar{X}_t))\phi_t\big)}{(t-s)^{H+\frac12}}\mathrm{d}s
		\\
		&-(H-\frac12)t^{H-\frac12}\int_0^t\frac{s^{\frac12-H}g(\bar{X}_s)^{-1}\big(\phi'_s-(A+D\bar{b}(\bar{X}_s))\phi_s\big)}{(t-s)^{H+\frac12}}\mathrm{d}s\Big\|_1^2\mathrm{d}t
		,
	\end{align*}
	for all $\phi\in\mathcal{C}([0,T],V)$, such that $\phi_0=X_0$ and $g(\bar{X}_s)^{-1}\big(\phi'_s-(A+D\bar{b}(\bar{X}_s))\phi_s\big)\in\mathbb{K}'_H(L^2([0,T],V_1))$ and $\tilde{I}(\phi)=\infty$ otherwise.
	The detailed can be found in \cite{bourguin2024moderate}.
\end{remark}

According to the definitions of the MDP and the LDP, the slow variable $X^{\varepsilon,\delta}$ of the slow-fast system \eqref{eqn-1.2} satisfies the MDP if and only if the deviation component $Z^{\varepsilon,\delta}$ satisfies the LDP.
Therefore, we will now prove that the deviation component $Z^{\varepsilon,\delta}$
satisfies the LDP in $C([0,T],V)$ with respect to the speed function  $b(\varepsilon)=\frac{1}{h^2(\varepsilon)}$ and the rate function $\tilde{I}$.
This results are derived by verifying \textit{Conditions (1)} and \textit{(2)} in Lemma \ref{lemA.1}.

\begin{proof}[Proof of Condition (1) in Lemma \ref{lemA.1}]
	By Assumption \textbf{(H1)}, Lemmas \ref{lem6.3} and \ref{lem6.4},
	taking the same way as in the proof of Theorem \ref{th3.1}, it is easy to verify that 
	$\Gamma_N:=\{\tilde{\mathcal{G}}^0(u,v):(u,v)\in\mathcal{S}_N\}$ is a compact set in $C([0,T],V)$.
\end{proof}

\begin{proof}[Proof of Condition (2) in Lemma \ref{lemA.1}]
	Let $N\in\mathbb{N}$ and  $\varepsilon\in(0,1]$.
	%In this step, the constant $C>0$ is independent of $\varepsilon$, $\delta$, $\Delta$ which may change from line to line.
	Assume that $\{(u^\varepsilon,v^\varepsilon)\}_{\varepsilon\in(0,1)}\subset\mathcal{A}_b^N$ converges in distribution to $(u,v)\in\mathcal{S}_N$ as $\varepsilon\to 0$. 
	%We rewrite the controlled deviation component in  \eqref{eqn-4.5} as follows, $$\tilde{Z}^{\varepsilon,\delta}:=\mathcal{G}^{\varepsilon,\delta}\Big(\frac{B^H}{h(\varepsilon)}+u^\varepsilon,\frac{W}{h(\varepsilon)}+v^\varepsilon\Big).$$ 
	We will prove as $\varepsilon\to 0$, 
	\begin{align}
		\tilde{\mathcal{G}}^{\varepsilon,\delta}\Big(\frac{B^H}{h(\varepsilon)}+u^\varepsilon,\frac{W}{h(\varepsilon)}+v^\varepsilon\Big)
		\xrightarrow{\mathrm{weakly}} \tilde{\mathcal{G}}^0(u,v), \label{eqn-68}
	\end{align}
	i.e., the controlled deviation component \eqref{eqn-4.6} weakly converges to the solution of the skeleton equation \eqref{eqn-4.3}.
	To this end, we first define the auxiliary process
	\begin{align*}
		\mathrm{d}\bar{X}^{\varepsilon}_t=\big(A\bar{X}^{\varepsilon}_t+\bar{b}(\bar{X}^{\varepsilon}_t)\big)\mathrm{d}t
		+\sqrt{\varepsilon} g(\bar{X}^{\varepsilon}_t)\mathrm{d}B^H_t+\sqrt{\varepsilon} h(\varepsilon)g(\bar{X}^{\varepsilon}_t)\mathrm{d}u^\varepsilon_t,\quad\bar{X}^{\varepsilon}_0=x_0,\quad t\in[0,T].
	\end{align*}
	and divide $\tilde{Z}^{\varepsilon,\delta}=\hat{Z}^{\varepsilon,\delta}+\bar{Z}^{\varepsilon}$, where we set
	\begin{align}
		\hat{Z}^{\varepsilon,\delta}:=\frac{\bar{X}^{\varepsilon,\delta}-\bar{X}^{\varepsilon}}{\sqrt{\varepsilon}h(\varepsilon)},\quad \bar{Z}^{\varepsilon}:=\frac{\bar{X}^{\varepsilon}-\bar{X}}{\sqrt{\varepsilon}h(\varepsilon)},\label{eqn-4.16}
	\end{align}
	where the $\bar{X}$ is the solution to ODE \eqref{eq-2}. 
	Note that $\hat{Z}^{\varepsilon,\delta}_0=0$ and $\bar{Z}^{\varepsilon }_0=0$. In order to show  \eqref{eqn-68}, it is enough to verify the following two statements.
	\begin{enumerate}
		\item[\textbf{(a)}] For any $\sigma>0$,
		$$\lim_{\varepsilon \to 0} \mathbb{P}\big(\|\hat{Z}^{\varepsilon,\delta}\|_\infty>    \sigma      \big)=0.$$
		\item[\textbf{(b)}] As $\varepsilon\to 0$, 
		$$\bar{Z}^{\varepsilon }\xrightarrow{\mathrm{weakly}} \bar{Z}\quad \text{in $C([0,T],V)$}.$$
	\end{enumerate}

	Firstly, we will prove Statement \textbf{(a)}.  It is equivalent to show that for any $\sigma>0$,
	$$
	\begin{aligned}
		\lim_{\varepsilon\to0}\mathbb{P}\big(\|\bar{X}^{\varepsilon,\delta}-\bar{X}^{\varepsilon}\|_{\infty}>\sqrt{\varepsilon}h(\varepsilon)    \sigma   \big)=0.
		\label{eqn-71}
	\end{aligned}
	$$
	Now, we construct the auxiliary process as follows
	$$
	\begin{aligned}
		\begin{cases}
			\mathrm{d}\hat{X}^{\varepsilon,\delta}_t=(A\hat{X}^{\varepsilon,\delta}_t+b(\bar{X}^{\varepsilon,\delta}_{t(\Delta)},\hat{Y}^{\varepsilon,\delta}_t))\mathrm{d}t
			+\sqrt{\varepsilon}h(\varepsilon)g(\hat{X}^{\varepsilon,\delta}_t)\mathrm{d}u^\varepsilon_t
			+\sqrt\varepsilon g(\hat{X}^{\varepsilon,\delta}_t)\mathrm{d}B^H_t,\\  
			\mathrm{d}\hat{Y}^{\varepsilon,\delta}_t=\frac1\delta(A\hat{Y}^{\varepsilon,\delta}_t+F(\bar{X}^{\varepsilon,\delta}_{t(\Delta)},\hat{Y}^{\varepsilon,\delta}_t))\mathrm{d}t
			+\frac{1}{\sqrt\delta} G(\bar{X}^{\varepsilon,\delta}_{t(\Delta)},\hat{Y}^{\varepsilon,\delta}_t)\mathrm{d}W_t,\\
			\hat{X}^{\varepsilon,\delta}_0=X_0,~\hat{Y}^{\varepsilon,\delta}_0=Y_0,~t\in[0,T],
		\end{cases}
		\label{eqn-4.25}
	\end{aligned}	
	$$
	where $t(\Delta)=\left\lfloor\frac t\Delta\right\rfloor\Delta $ is the nearest breakpoint preceding $t\in[0,T]$. Without loss of generality, assume that $\Delta<1$.  By essentially the same argument as in Lemmas \ref{lem6.1} and \ref{lem6.5}, we have for every $p\geq 1$,
	\begin{align*}
		\mathbb{E}\big[\|\hat{X}^{\varepsilon,\delta}\|^p_{\alpha,\infty} \big]\leq C ,\quad\int_0^T \mathbb{E}\big[\|\hat{Y}^{\varepsilon,\delta}_t\|^2\big]\mathrm{d}t\leq C,
		%\label{eqn-6.26}
	\end{align*} 
	where the positive constant $C$ is independent of $\varepsilon$, $\delta$ and $\Delta$.  
	In addition, similar to the proof of Lemma \ref{lem5.2} in Section \ref{5}, it is easily shown that there exists a constant $C>0$, depending only on 
	$T$ and $N$, such that for any 
	$x,y\in V$   and  $\varepsilon,\delta\in(0,1)$, we have  
	\begin{align*}
		\int_0^T\mathbb{E}\big[\|\bar{Y}^{\varepsilon,\delta}_t-\hat{Y}^{\varepsilon,\delta}_t\|^2 \big]\mathrm{d}t\leq C\Delta+C\delta h^2(\varepsilon).	%\label{eqn-6.27}
	\end{align*}

	Take $R>0$ large enough and define the stopping time 
	$$\tau_R:=\inf\{t\geq 0: \Lambda^{0,T}_{\alpha,B^H}\geq R\}\wedge T.$$
	Then, set $A_{R,T}:=\{\Lambda^{0,T}_{\alpha,B^H}\leq R\}$.

	Similar to the proof of Theorem \ref{th3.1},
	it follows from $\zeta\in(\frac{1+2\alpha}{4},\frac12)$ and $\Delta<1$ that
	\begin{align}
		\mathbb{E}\Big[\frac{1}{\varepsilon h^2(\varepsilon)}\|\bar{X}^{\varepsilon,\delta}-\bar{X}^{\varepsilon}\|^2_{\alpha,T}  \textbf{1}_{A_{R,T}}  \Big]
		\leq C_{T,N,R}\Big(\frac{1}{\Delta h^2(\varepsilon)}\frac{\delta}{\varepsilon}+\frac{\Delta^{\alpha+\frac12}}{\varepsilon h^2(\varepsilon)} +\frac{\delta}{\varepsilon}\Big).\label{eqn-4.57}
	\end{align}
	According to the Assumption \textbf{(A7)}, it follows that as $\varepsilon \to 0$, we have $\frac\delta\varepsilon\to 0$. One can choose an appropriate $0<\Delta=\Delta(\varepsilon)<1$, such that as $\varepsilon\to 0$, we have
	$$\frac{1}{\Delta h^2(\varepsilon)}\frac{\delta}{\varepsilon}\to 0 \quad\text{and}\quad \frac{\Delta^{\alpha+\frac12}}{\varepsilon h^2(\varepsilon)}\to 0.$$
	For example, let  
	$\theta\in(\frac{2}{2\alpha+3},1)$, and when $h(\varepsilon)=\varepsilon^{-\frac{\theta}{2}}$, choose $\Delta=\varepsilon^{\theta}$.

	Therefore, for any fixed  $R>0$, we have
	$$\lim_{\varepsilon\to 0}\mathbb{E}\Big[\frac{1}{\varepsilon h^2(\varepsilon)}\|\bar{X}^{\varepsilon,\delta}-\bar{X}^{\varepsilon}\|^2_{\alpha,T}  \textbf{1}_{A_{R,T}}  \Big]=0. $$
	By Markov's inequality, it follows that
	\begin{align}
		&\mathbb{P}\Big(\frac{1}{\varepsilon h^2(\varepsilon)}\|\bar{X}^{\varepsilon,\delta}-\bar{X}^{\varepsilon}\|^2_{\alpha,T} \geq \sigma \Big)\notag\\
		\leq & \mathbb{P}(\tau_R< T)
		+\mathbb{P}\Big(\frac{1}{\varepsilon h^2(\varepsilon)}\|\bar{X}^{\varepsilon,\delta}-\bar{X}^{\varepsilon}\|^2_{\alpha,T}\geq \sigma,~ \tau_R\geq T\Big)\notag\\
		%	\leq & \mathbb{P}\big(\Lambda^{0,T}_{\alpha,B^H}\geq R\big)
		%	+\mathbb{P}\Big(\frac{1}{\varepsilon h^2(\varepsilon)}\|\bar{X}^{\varepsilon,\delta}-\bar{X}^{\varepsilon}\|^2_{\alpha,T} \textbf{1}_{A_{R,T}}\geq \sigma \Big)\notag\\
		\leq &\mathbb{P}\big(\Lambda^{0,T}_{\alpha,B^H}\geq R\big)
		+\frac{1}{\sigma}\mathbb{E}\Big[\frac{1}{\varepsilon h^2(\varepsilon)}\|\bar{X}^{\varepsilon,\delta}-\bar{X}^{\varepsilon}\|^2_{\alpha,T} \textbf{1}_{A_{R,T}} \Big].\label{eqn-4.58}
	\end{align}
	According to \eqref{eqn-4.57}, for any fixed  $R>0$  and  $\sigma>0$, taking $\limsup_{\varepsilon\to 0}$, the second term on the right-hand side of   \eqref{eqn-4.58} converges to zero. Then, letting $R\to \infty$, the first term on the right-hand side of   \eqref{eqn-4.58} converges to zero. Therefore, we have
	$$\lim_{\varepsilon\to 0}\mathbb{P}\Big(\frac{1}{\varepsilon h^2(\varepsilon)}\|\bar{X}^{\varepsilon,\delta}-\bar{X}^{\varepsilon}\|^2_{\alpha,T} \geq \sigma \Big)=0.$$
	From the definition of $\hat{Z}^{\varepsilon,\delta}$ in  \eqref{eqn-4.16}, it follows that for any $\sigma>0$, we have
	$$\lim_{\varepsilon\to 0} \mathbb{P}\Big(\|\hat{Z}^{\varepsilon,\delta}\|^2_{\alpha,T} \geq \sigma\Big)=0.$$ 
	Thus, Statement \textbf{(a)} is proven.
	
	Similar to the proof of Theorem \ref{th3.1} in Section \ref{5}, it is easily shown that as 
	$\varepsilon\to 0$, if $\{(u^\varepsilon,v^\varepsilon)\}_{\varepsilon\in(0,1)}\subset\mathcal{A}_b^N$ weakly converges to $(u,v)\in\mathcal{S}_N$, then $\{\bar{Z}^\varepsilon=\tilde{\mathcal{G}}^0(u^\varepsilon,v^\varepsilon)\}_{\varepsilon\in(0,1)}$ weakly converges to $\bar{Z}$ in $C([0,T],V)$. Thus, Statement \textbf{(b)} is proven.
\end{proof}

\begin{proof}[Proof of Theorem \ref{th6.1}]
	According to the Lemma \ref{lemA.1}, the proof of the \textit{Condition (1)} and \textit{(2)}, we have that the slow component $Z^{\varepsilon,\delta}$ of system \eqref{eqn-4.2} satisfies the LDP with the speed function  $b(\varepsilon)=\frac{1}{h^2(\varepsilon)}$ and the good
	rate function $\tilde{I}:C([0,T],V)\to [0,\infty]$. The proof was completed. 
\end{proof}

\begin{remark}
	Compared to the study of the LDP, the MDP requires further investigation into the regularity of the averaged drift coefficient and the deviation component. Additionally, due to the involvement of the speed function, the estimate for the controlled deviation component becomes complex.  
\end{remark}

\section{Remarks and Generalizations}\label{7}

Here we discuss the choice of the coefficient $g$. When $g:=g(x,y)$,  the system is referred to as a fully coupled system.

In     \cite{hairer2020averaging},  the stochastic sewing lemma was employed to establish both the well-posedness and the AP for the finite-dimensional fully coupled slow-fast systems. Based on \cite{hairer2020averaging}, \cite{gailus2025large} studied the LDP in the finite-dimensional case, but the diffusion term of the fast equation is assumed to be independent of the slow variable, while the drift coefficient is uniformly bounded with respect to the slow variable. This setup allows for independent estimates of the fast variable, leading to tightness estimates for the slow variable.
In the infinite-dimensional case, \cite{li2022mild} introduced the mild increments and mild H\"{o}lder spaces, derived the mild stochastic sewing lemma, and established the AP. However, \cite{li2022mild} is limited to systems where the fast equation is independent of the slow variable, and thus cannot accommodate feedback from the slow variable. Based on \cite{li2022mild}, \cite{li2025averaging} applied techniques from \cite{hairer2020averaging} and utilized the solution of the fast equation with the slow variable fixed to decouple the fast and slow variables, thereby proving the AP. Notably, in the system studied in \cite{li2025averaging}, the fast equation only includes additive noise, which significantly simplifies the tightness estimates and convergence proofs.

In the study of the LDP for systems driven by FBM, how to define the controlled system is important when using the variational representation and weak convergence method. 
Although when $g:=g(x)$, the integral with respect to the control term could be defined with the H\"{o}lder topology. But in the fully coupled systems, it will be more difficult. The definition of the integral with respect to the control term with aid of the stochastic sewing lemma \cite{hairer2020averaging} is an open problem.
As in the finite-dimensional case, 
\cite{gailus2025large} defines the integral with respect to the control terms by using the generalized Riemann-Stieltjes integral, rather than employing the mixed Wiener-Young integral associated with the stochastic sewing lemma \cite{hairer2020averaging}. 
In this paper, we similarly approach the problem within the framework of generalized Riemann-Stieltjes integrals.

Based on the main system in this paper as follows
\begin{align*}
	\begin{cases}
		\mathrm{d}X^{\varepsilon,\delta}_t=(AX^{\varepsilon,\delta}_t+b(X^{\varepsilon,\delta}_t,Y^{\varepsilon,\delta}_t))\mathrm{d}t+\sqrt\varepsilon g(X^{\varepsilon,\delta}_t)\mathrm{d}B^H_t,\\  		\mathrm{d}Y^{\varepsilon,\delta}_t=\frac1\delta(AY^{\varepsilon,\delta}_t+F(X^{\varepsilon,\delta}_t,Y^{\varepsilon,\delta}_t))\mathrm{d}t+\frac{1}{\sqrt\delta} G(X^{\varepsilon,\delta}_t,Y^{\varepsilon,\delta}_t)\mathrm{d}W_t,\\		X^{\varepsilon,\delta}_0=X_0,~ Y^{\varepsilon,\delta}_0=Y_0,~t\in[0,T],
	\end{cases}
\end{align*}
we now investigate whether the result can be extended to the case $g:=g(x,y)$, so that this term incorporates feedback from the fast variable.	
The study of the LDP for such fully coupled systems presents significant challenges. Currently, results on the LDP for infinite-dimensional fully coupled slow-fast systems are lacking. The following points outline the main difficulties encountered in the analysis of the LDP for these systems.
\begin{enumerate}
	\item [1.]
	The well-posedness of the equations cannot be established by directly extending results from the literature.
	First, the well-posedness of infinite-dimensional systems \cite{hairer2020averaging} cannot be directly inferred from that in finite-dimensional spaces. Since our study is conducted on a Hilbert space with an operator $A$, proving the well-posedness requires more complex calculations.
	Second, the well-posedness of fully coupled infinite-dimensional systems cannot be derived from existing results in the infinite-dimensional setting.
	Specifically, in \cite{li2025averaging}, the fast equation is driven only by additive noise, which weakens the coupling between the fast and slow variables. 
	%Additionally, the constraints of infinite-dimensional spaces and the \textcolor{red}{Young} boundary conditions impose strict limitations on $H$, restricting it to the interval $(\frac34,1)$.
	Consequently, the well-posedness of fully coupled infinite-dimensional systems remains an open problem, requiring further research.

	\item [2.] 
	Even though the variational representation and weak convergence method remain effective for the fully coupled systems, the tightness of the controlled slow component cannot be established in this case, making it challenging to obtain the weak convergence result.
	1) The Khasminskii's time discretization employed in this paper fails when extended to the infinite-dimensional fully coupled systems. 
	2) The viable pair method is ineffective in the infinite-dimensional fully coupled case, as establishing the tightness of the controlled slow variables is challenging in infinite-dimensional spaces by using Arzela-Ascoli's theorem. 
	3) Additionally, obtaining the boundedness for the controlled fully coupled slow variables presents significant challenges. Traditional techniques, as discussed in \cite{gailus2025large, li2022mild, li2025averaging} and our work, fail to provide boundedness for  these variables. 
	Therefore, we consider the techniques used in \cite{hairer2020averaging}.
	However, due to the dependence on the control terms, obtaining the corresponding Markov semigroup and invariant measure for the frozen equation is challenging. In fact, the method in \cite{hairer2020averaging}, which replaces the original controlled fast variables by solutions to the controlled fast equation with fixed slow variable, is not applicable.
	In conclusion, without imposing strong regularity conditions (such as uniform boundedness) on the drift and diffusion coefficients of the slow-fast systems, obtaining boundedness for the controlled slow variables and proving tightness require further investigation.
\end{enumerate}

\backmatter

\bmhead{Acknowledgements}

This work was partly supported by the Key International (Regional) Cooperative Research Projects of
the Natural Science Foundation (NSF) of China (Grant 12120101002) and the NSF of China (Grant 12072264).

\bmhead{Data Availability}

No datasets were generated or analysed during the current study.

\section*{Declarations}

\textbf{Conflict of interest.}
The authors declare that they have no known competing financial interests or personal relationships that could have appeared to influence the work reported in this paper.

%Some journals require declarations to be submitted in a standardised format. Please check the Instructions for Authors of the journal to which you are submitting to see if you need to complete this section. If yes, your manuscript must contain the following sections under the heading `Declarations':

%\begin{itemize}
%\item Funding
%\item Conflict of interest/Competing interests (check journal-specific guidelines for which heading to use)
%\item Ethics approval and consent to participate
%\item Consent for publication
%\item Data availability 
%\item Materials availability
%\item Code availability 
%\item Author contribution
%\end{itemize}

%\noindent

%%===================================================%%
%% For presentation purpose, we have included        %%
%% \bigskip command. Please ignore this.             %%
%%===================================================%%

\begin{appendices}
	
\renewcommand*{\thedefinition}{A.\arabic{definition}}
\renewcommand*{\thelemma}{A.\arabic{lemma}}
\renewcommand{\theequation}{A.\arabic{equation}}
\setcounter{equation}{0}

\section{Criterion for the LDP} \label{A}

We recall the definition of the LDP. Let $\mathcal{E}$ be a Polish space and let $\{X^\varepsilon\}_{\varepsilon\in(0,1)}$ be a collection of $\mathcal{E}$-valued stochastic processes. In fact, in this work, $\mathcal{E}=C([0,T],V)$.

\begin{definition}\label{deA.1}
	
	\begin{enumerate}
		\item [(i)]  A function $I:\mathcal{E}\to[0,\infty)$ is called a rate function on $\mathcal{E}$, if for any $M<\infty$, the
		level set $\{x\in\mathcal{E}:~I(x)\leq M\}$ is a compact subset of $\mathcal{E}$. For $A\in\mathcal{B}(\mathcal{E})$, we define $I(A)=\inf_{x\in A}I(x)$.
		\item [(ii)]   Let $I$ be a rate function on $\mathcal{E}$. A collection  $\{X^\varepsilon\}_{\varepsilon\in(0,1)}$ of $\mathcal{E}$-valued stochastic processes is said to
		satisfy the LDP in $\mathcal{E}$, as $\varepsilon\to0$, with rate function $I$ if the following two conditions hold:
		\begin{itemize}
			\item Large deviation upper bound. For each open set G in $\mathcal{E}$,
			$$\operatorname*{limsup}_{\varepsilon\to0}-\varepsilon\log\mathbb{P}(X^\varepsilon\in G)\leq I(G).$$
			\item Large deviation lower bound. For each closed set F in $\mathcal{E}$,
			$$\operatorname*{liminf}_{\varepsilon\to0}-\varepsilon\log\mathbb{P}(X^\varepsilon\in F)\geq I(F).$$
			
		\end{itemize}
		
	\end{enumerate}
\end{definition}
\begin{definition}\label{deA.2}
	Let $I$ be a rate function on the Polish space $\mathcal{E}$. A collection $\{X^\varepsilon\}_{\varepsilon \in (0,1)}$ of $\mathcal{E}$-valued random variables is said to satisfy the Laplace principle upper bound (lower bound, respectively) on $\mathcal{E}$ with rate function $I$ if for all bounded continuous functions $h :\mathcal{E}\to\mathbb{R}$,
	\begin{equation}\label{eqn-A.1} 
		\operatorname*{limsup}_{\varepsilon\to0}-\varepsilon\log\mathbb{E}\Big[\exp\Big\{-\frac{h(X^\varepsilon)}{\varepsilon}\Big\}\Big]\leq\inf_{x\in\mathcal{E}}\{h(x)+I(x)\},
	\end{equation}	
	respectively,   
	\begin{equation}\label{eqn-A.2} 
		\operatorname*{liminf}_{\varepsilon\to0}-\varepsilon\log\mathbb{E}\Big[\exp\Big\{-\frac{h(X^\varepsilon)}{\varepsilon}\Big\}\Big]\geq\inf_{x\in\mathcal{E}}\{h(x)+I(x)\}.
	\end{equation}	
	The Laplace principle (LP) is said to hold for $\{X^\varepsilon\}_{\varepsilon \in (0,1)}$ with rate function $I$ if both the Laplace upper bound and lower bound are satisfied for all bounded continuous functions $h :\mathcal{E}\to\mathbb{R}$.

\end{definition}

It is well known that a collection $\{X^\varepsilon\}_{\varepsilon \in (0,1)}$ of $\mathcal{E}$-valued stochastic processes satisfies the LDP with the rate function $I$ if and only if it satisfies the LP with the rate function $I$ (cf. \cite[Theorems 1.5 and 1.8]{Budhiraja2019}).

\begin{lemma}\label{lemA.1}
	For $0<\varepsilon<1$, let $\mathcal{G}^\varepsilon:\Omega=C_0([0,T],V\times V)\to\mathcal{E}$ be a  measurable map. We now provide a sufficient condition on the maps $\mathcal{G}^\varepsilon$ for the family
	$\{\mathcal{G}^\varepsilon(\sqrt\varepsilon B^H,\sqrt\varepsilon W)\}_{\varepsilon\in(0,1)}$ to satisfy the LDP.
	Assume that there exists a measurable map $\mathcal{G}^0:\mathcal{H}\to\mathcal{E}$ such that the following conditions hold.
	\begin{enumerate}
		\item  [(1)]   For any $N\in\mathbb{N}$, the set 
		$\Gamma_N=\{\mathcal{G}^0(u,v):(u,v)\in\mathcal{S}_N\}$
		is a compact subset of $\mathcal{E}$.
		\item  [(2)] For $N\in\mathbb{N}$, consider a family of $\mathcal{S}_N$-valued random elements $(u^\varepsilon,v^\varepsilon)\in\mathcal{A}_b^N$ on $(\Omega,\mathcal{F},\mathbb{P})$, such that $\{(u^\varepsilon,v^\varepsilon)\}_{\varepsilon \in(0,1)}$ converges in distribution to $(u,v)$ as $\varepsilon\to 0$.  Then $\{\mathcal{G}^\varepsilon(\sqrt\varepsilon B^H+u^\varepsilon,\sqrt\varepsilon W+v^\varepsilon)\}_{\varepsilon \in(0,1)} $ converges in distribution to $\mathcal{G}^0(u,v)$.

	\end{enumerate}
	
	Then the family $\{\mathcal{G}^\varepsilon(\sqrt\varepsilon B^H,\sqrt\varepsilon W)\}_{\varepsilon\in(0,1)}$ satisfies the LDP in $C([0,T],V)$ with the rate function $I$ given by
	\begin{align}
		I(x)
		%		=&\inf_{\{u\in\mathcal{H}^H:x=\mathcal{G}^0(u,0)\}}\{\frac12\|u\|^2_{\mathcal{H}^H}\}\notag\\
		=&\inf_{\{(u,v)\in\mathcal{H}:x=\mathcal{G}^0(u,v)\}}\{\frac12\|(u,v)\|^2_{\mathcal{H}}\}.
		\label{eqn-A.9} 
	\end{align}	
\end{lemma}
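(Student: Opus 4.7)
The plan is to invoke the standard equivalence between the LDP and the Laplace principle (see Definition \ref{deA.2} and the remark following it), and then verify the Laplace upper and lower bounds for the family $X^\varepsilon:=\mathcal{G}^\varepsilon(\sqrt\varepsilon B^H,\sqrt\varepsilon W)$ via the variational representation in Lemma \ref{lem2.3}. First I would show that $I$ defined in \eqref{eqn-A.9} is a good rate function: for any $M<\infty$, the level set $\{x\in\mathcal{E}:I(x)\le M\}$ equals $\mathcal{G}^0(\mathcal{S}_M)=\Gamma_M$ (with a short argument that the infimum in \eqref{eqn-A.9} is attained by weak compactness of $\mathcal{S}_M$, since $(u,v)\mapsto\tfrac12\|(u,v)\|_\mathcal{H}^2$ is lower semicontinuous on $\mathcal{S}_M$). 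By Condition (1), $\Gamma_M$ is compact in $\mathcal{E}$, so $I$ has compact sub-level sets.

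Next, for any bounded continuous $h:\mathcal{E}\to\mathbb{R}$, I would apply Lemma \ref{lem2.3} with the functional $F=\varepsilon^{-1}h\circ\mathcal{G}^\varepsilon(\sqrt\varepsilon\,\cdot\,,\sqrt\varepsilon\,\cdot\,)$ (after a scaling that replaces $(u^\varepsilon,v^\varepsilon)$ by $\sqrt\varepsilon(u^\varepsilon,v^\varepsilon)$ in the infimum, matching the factor $\sqrt\varepsilon$ in front of the noise), to obtain
\begin{equation*}
-\varepsilon\log\mathbb{E}\Big[\exp\Big\{-\tfrac{h(X^\varepsilon)}{\varepsilon}\Big\}\Big]=\inf_{(u^\varepsilon,v^\varepsilon)\in\mathcal{A}_b}\mathbb{E}\Big[h\big(\mathcal{G}^\varepsilon(\sqrt\varepsilon B^H+u^\varepsilon,\sqrt\varepsilon W+v^\varepsilon)\big)+\tfrac12\|(u^\varepsilon,v^\varepsilon)\|_\mathcal{H}^2\Big].
\end{equation*}
For the Laplace upper bound \eqref{eqn-A.1}, given any $x_0\in\mathcal{E}$ and $\eta>0$, pick $(u_0,v_0)\in\mathcal{H}$ with $x_0=\mathcal{G}^0(u_0,v_0)$ and $\tfrac12\|(u_0,v_0)\|_\mathcal{H}^2\le I(x_0)+\eta$. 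Use the deterministic control $(u^\varepsilon,v^\varepsilon)\equiv(u_0,v_0)$ in the infimum; by Condition (2), $\mathcal{G}^\varepsilon(\sqrt\varepsilon B^H+u_0,\sqrt\varepsilon W+v_0)$ converges in distribution to $\mathcal{G}^0(u_0,v_0)=x_0$, and boundedness/continuity of $h$ yield the bound $h(x_0)+I(x_0)+\eta$. Taking the infimum over $x_0$ and sending $\eta\to 0$ gives \eqref{eqn-A.1}.

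For the Laplace lower bound \eqref{eqn-A.2}, the standard device is to choose, for each $\varepsilon\in(0,1)$, a nearly optimal $(u^\varepsilon,v^\varepsilon)\in\mathcal{A}_b$ realizing the infimum to within $\varepsilon$. A routine truncation argument based on the boundedness of $h$ confines $(u^\varepsilon,v^\varepsilon)$ to some $\mathcal{A}_b^N$ with $N$ independent of $\varepsilon$. Tightness of $\{(u^\varepsilon,v^\varepsilon)\}$ then follows from the compactness of $\mathcal{S}_N$ in the weak topology, so along a subsequence $(u^\varepsilon,v^\varepsilon)$ converges in distribution to some $\mathcal{S}_N$-valued $(u,v)$. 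Condition (2) gives the joint weak convergence of $\mathcal{G}^\varepsilon(\sqrt\varepsilon B^H+u^\varepsilon,\sqrt\varepsilon W+v^\varepsilon)$ to $\mathcal{G}^0(u,v)$, and the lower semicontinuity of $\tfrac12\|\cdot\|_\mathcal{H}^2$ together with Fatou's lemma yield $\liminf\ge\mathbb{E}[h(\mathcal{G}^0(u,v))+\tfrac12\|(u,v)\|_\mathcal{H}^2]\ge\inf_{x\in\mathcal{E}}\{h(x)+I(x)\}$.

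The main obstacle is the Laplace lower bound: verifying that one can pass from the $\mathbb{P}$-a.s.\ variational infimum on the left-hand side to a genuine infimum over $\mathcal{H}$ on the right-hand side requires a careful joint tightness argument for the pair $\big((u^\varepsilon,v^\varepsilon),\mathcal{G}^\varepsilon(\sqrt\varepsilon B^H+u^\varepsilon,\sqrt\varepsilon W+v^\varepsilon)\big)$ and a truncation of the nearly optimal control to keep its $\mathcal{H}$-norm bounded uniformly in $\varepsilon$; the rest is a direct adaptation of \cite{Budhiraja2000,Budhiraja2008,Budhiraja2020} with $(B,W)$ replaced by $(B^H,W)$, the variational formula of Lemma \ref{lem2.3} playing the role previously played by its pure-Brownian counterpart.
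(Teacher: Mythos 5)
Your proposal follows the same weak-convergence route as the paper: equivalence of the LDP with the Laplace principle, the scaled variational formula obtained from Lemma \ref{lem2.3} (your substitution replacing $(u,v)$ by $\sqrt\varepsilon(u,v)$ is exactly what makes the paper's display \eqref{eqn-A.11} correct, and you are right that boundedness of $h$ makes $F=\varepsilon^{-1}h\circ\mathcal{G}^\varepsilon(\sqrt\varepsilon\,\cdot,\sqrt\varepsilon\,\cdot)$ admissible), a deterministic near-optimal control for the upper bound, and truncation plus tightness plus Fatou and weak lower semicontinuity of $\tfrac12\|\cdot\|^2_{\mathcal{H}}$ for the lower bound. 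The truncation you defer as "routine" is carried out in the paper by stopping the nearly optimal control at $\tau_N^\varepsilon$ and bounding $\mathbb{P}\big((u^\varepsilon,v^\varepsilon)\neq(u^{\varepsilon,N},v^{\varepsilon,N})\big)\le (2M+\xi)/N$ via $\mathbb{E}\big[\tfrac12\|(u^\varepsilon,v^\varepsilon)\|^2_{\mathcal{H}}\big]\le 2\|h\|_\infty+\xi$, so your outline of that step is consistent with what is actually needed.

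There is, however, one step that does not go through under the hypotheses of the lemma as stated: your identification of the level set $\{x:I(x)\le M\}$ with $\Gamma_M$ via attainment of the infimum in \eqref{eqn-A.9}. Weak compactness of $\mathcal{S}_{M}$ and lower semicontinuity of the norm give a weak limit point $(u,v)$ of a minimizing sequence $(u_n,v_n)$ with $\mathcal{G}^0(u_n,v_n)=x$, but to conclude $\mathcal{G}^0(u,v)=x$ you need continuity of $\mathcal{G}^0$ on $\mathcal{S}_N$ with the weak topology. That continuity is not among the assumptions of Lemma \ref{lemA.1} (\textit{Condition (1)} only asserts compactness of the image $\Gamma_N$; continuity is what the paper proves when verifying \textit{Condition (1)} for the concrete system, but the abstract lemma cannot appeal to it). Without attainment you only get $\Gamma_M\subseteq\{I\le M\}$, not the reverse inclusion. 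The standard fix, and the one the paper uses, avoids attainment altogether: show $\{x:I(x)\le M\}=\bigcap_{n\ge1}\Gamma_{M+\frac1n}$ directly from the definition of $I$ as an infimum, and conclude compactness since each $\Gamma_{M+\frac1n}$ is compact by \textit{Condition (1)}. With that replacement your argument is complete and coincides with the paper's proof.
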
 
\begin{proof}
	Due to the equivalence between the LDP and the LP in Polish spaces, it suffices to prove $\eqref{eqn-A.1}$ and $\eqref{eqn-A.2}$ with the rate function $I$ as defined in $\eqref{eqn-A.9}$ for all real-valued, bounded and continuous functions $h$ on $\mathcal{E}$, and to verify that $I$ is a good rate function.\\
	\textit{Proof of the upper bound $\eqref{eqn-A.1}$. }
	Without loss of generality, we assume that $$\inf_{x\in\mathcal{E}}\{h(x)+I(x)\}<\infty.$$
	Let $\xi>0$ be arbitrary. Then, there exists $x_0\in\mathcal{E}$ such that
	\begin{equation}\label{eqn-A.10} 
		h(x_0)+I(x_0)\leq\inf_{x\in\mathcal{E}}\{h(x)+I(x)\}+\frac{\xi}{2}<\infty.
	\end{equation}	
	From the definition of 
	$I$, there exists $(\tilde{u},0)\in\mathcal{H}$ such that
	$\frac{1}{2}\|\tilde{u}\|^2_{\mathcal{H}^H}\leq I(x_0)+\frac{\xi}{2}$, for $x_0=\mathcal{G}^0(\tilde{u},0).$
	Applying Lemma \ref{lem2.3} to the function $h\circ\mathcal{G}^{\varepsilon}$, we obtain
	\begin{align}
		-\varepsilon\log\mathbb{E}\Big[\exp\Big\{-\frac{h(X^{\varepsilon})}{\varepsilon}\Big\}\Big] %=&-\varepsilon\log\mathbb{E}\left[\exp\left\{-\frac{h\circ\mathcal{G}^{\varepsilon}(\sqrt\varepsilon B^H,\sqrt\varepsilon W)}{\varepsilon}\right\}\right] \notag \\
		=\inf_{(u,v)\in\mathcal{A}_b}\mathbb{E}\Big[h\circ\mathcal{G}^{\varepsilon}\left(\sqrt\varepsilon B^H+u, \sqrt\varepsilon W+v \right)+\frac12\|(u,v)\|_{\mathcal{H}}^2\Big].\label{eqn-A.11} 
	\end{align}
	Then, we have
	\begin{align}
		&\limsup_{\varepsilon\to0}-\varepsilon\log\mathbb{E}\Big[\exp\Big\{-\frac{h(X^{\varepsilon})}{\varepsilon}\Big\}\Big]\notag\\ %=&\limsup_{\varepsilon\to0}\inf_{(u,v)\in\mathcal{A}_b}\mathbb{E}\left[h\circ\mathcal{G}^{\varepsilon}\left(\sqrt\varepsilon B^H+u, \sqrt\varepsilon W+v\right)+\frac{1}{2}\|(u,v)\|_{\mathcal{H}}^2\right]  \notag\\
		\leq&\limsup_{\varepsilon\to0}\mathbb{E}\Big[h\circ\mathcal{G}^{\varepsilon}\left(\sqrt\varepsilon B^H+\tilde{u}, \sqrt\varepsilon W+0\right)+\frac12\|\tilde{u}\|_{\mathcal{H}^{H}}^2\Big] \notag\\
		\leq&\operatorname*{lim}_{\varepsilon\to0}\mathbb{E}\left[h\circ\mathcal{G}^{\varepsilon}\left(\sqrt\varepsilon B^H+\tilde{u}, \sqrt\varepsilon W\right)\right]+I(x_0)+\frac\xi2.\label{eqn-A.12} 
	\end{align}
	
	Given that $h$ is bounded and continuous, it follows from \textit{Condition (2)} in Lemma \ref{lemA.1} that, as $\varepsilon\to0$, the last term in the inequality above is equal to
	\begin{equation}\label{eqn-A.13}
		h\circ\mathcal{G}^0\left(\tilde{u},0\right)+I(x_0)+\frac\xi2=h(x_0)+I(x_0)+\frac\xi2.
	\end{equation}	
	By combining  $\eqref{eqn-A.10}$, $\eqref{eqn-A.12}$ and $\eqref{eqn-A.13}$, we deduce that
	$$\limsup_{\varepsilon\to0}-\varepsilon\log\mathbb{E}\Big[\exp\Big\{-\frac{h(X^\varepsilon)}{\varepsilon}\Big\}\Big]\leq\inf_{x\in\mathcal{E}}\{h(x)+I(x)\}+\xi.$$ 
	Since $\xi$ is arbitrary, the upper bound follows.\\
	\textit{Proof of the lower bound $\eqref{eqn-A.2}$. } Let $\xi>0$ be arbitrary. Then, for any $\varepsilon>0$, there exists $(u^\varepsilon,v^\varepsilon)\in\mathcal{A}_b$ such that
	\begin{align}
		&\inf_{(u,v)\in\mathcal{A}_b}\mathbb{E}\left[h\circ\mathcal{G}^{\varepsilon}\left(\sqrt\varepsilon B^H+u, \sqrt\varepsilon W+v\right)+\frac12\|(u,v)\|_{\mathcal{H}}^2\right]\notag\\
		\geq&\mathbb{E}\left[h\circ\mathcal{G}^{\varepsilon}\left(\sqrt\varepsilon B^H+u^{\varepsilon}, \sqrt\varepsilon W+v^\varepsilon\right)+\frac12\|(u^{\varepsilon},v^\varepsilon)\|_{\mathcal{H}}^2\right]-\xi.
		\label{eqn-A.14}
	\end{align}	
	Note that, from  $\eqref{eqn-A.11}$ and $\eqref{eqn-A.14}$, for any $\varepsilon>0$, $\mathbb{E}\left[\frac12\|(u^\varepsilon,v^\varepsilon)\|_{\mathcal{H}}^2\right]\leq2M+\xi$, where $M=\|h\|_\infty$. Now define the stopping time $\tau_{N}^{\varepsilon}=\inf\{t\geq 0:\frac{1}{2}\int_{0}^{t}(\|\dot{u}^{\varepsilon}_s\|_1^{2}+\|(v^{\varepsilon})_s^{\prime}\|_2^{2})\mathrm{d}s\geq N\}\wedge T$, where $u^\varepsilon=\mathbb{K}_H\dot{u}^\varepsilon$, and $(v^\varepsilon)^{\prime}$ denotes the time derivative of $v^\varepsilon$. We denote $(\dot{u}^{\varepsilon,N},(v^{\varepsilon,N})^{\prime})=(\dot{u}^{\varepsilon}\textbf{1}_{[0,\tau_{N}^{\varepsilon}]},(v^\varepsilon)^{\prime}\textbf{1}_{[0,\tau_{N}^{\varepsilon}]})$ and $(u^{\varepsilon,N},v^{\varepsilon,N})=(\mathbb{K}_H\dot{u}^{\varepsilon,N},\int_0^\cdot (v^{\varepsilon,N})^{\prime}_s \mathrm{d}s)$. Then, the process $(u^{\varepsilon,N},v^{\varepsilon,N})\in\mathcal{A}_b^N$ with $\frac12\|(u^{\varepsilon,N},v^{\varepsilon,N})\|_{\mathcal{H}}^2\leq N$, $\mathbb{P}$-a.s. Moreover,
	\begin{align*}
		\mathbb{P}\big((u^\varepsilon,v^\varepsilon)\neq (u^{\varepsilon,N},v^{\varepsilon,N})\big)
		%\leq\mathbb{P}\left(\frac12\int_0^T(\|\dot{u}^{\varepsilon}_s\|_0^{2}+\|(v^{\varepsilon})_s^{\prime}\|_2^{2})\mathrm{d}s\geq N\right)
		\leq\mathbb{P}\Big(\frac12\|(u^\varepsilon,v^\varepsilon)\|_{\mathcal{H}}^2\geq N\Big)
		\leq\frac{2M+\xi}N.
	\end{align*}
	
	By choosing $N$ large enough such that $\frac{2M(2M+\xi)}N\leq\xi$, we obtain that $\eqref{eqn-A.14}$ holds with $(u^\varepsilon,v^\varepsilon)$ replaced with $(u^{\varepsilon,N},v^{\varepsilon,N})$ and $\xi$ with $2\xi$. Therefore, we will denote $(u^{\varepsilon,N},v^{\varepsilon,N})$ simply as $(u^\varepsilon,v^\varepsilon)$. Furthermore, by the definition of $\mathcal{A}_b^N$,
	\begin{align*}
		\sup_{\varepsilon\in(0,1)}\frac12\|(u^\varepsilon,v^\varepsilon)\|_{\mathcal{H}}^2\leq N,~~~~\mathbb{P}-a.s.
	\end{align*}	
	Since $\xi>0$ is arbitrary, in order to prove the lower bound  $\eqref{eqn-A.2}$, we now only need to show that 
	\begin{align*}
		\begin{aligned}\operatorname*{liminf}_{\varepsilon\to0}\mathbb{E}\Big[h\circ\mathcal{G}^{\varepsilon}\left(\sqrt\varepsilon B^H+u^{\varepsilon}, \sqrt\varepsilon W+v^\varepsilon\right)+\frac{1}{2}\|(u^{\varepsilon},v^{\varepsilon})\|_{\mathcal{H}}^{2}\Big]\geq\inf_{x\in\mathcal{E}}\{h(x)+I(x)\}.\end{aligned}
	\end{align*}	
	By selecting a subsequence (still relabelled by \{$\varepsilon$\}) along which $\{(u^\varepsilon,v^\varepsilon)\}_{\varepsilon \in (0,1)}$ converges to $(u,v)$ in distribution. Since $h$ is a bounded and continuous function, and the function on $\mathcal{H}$ defined
	by $(u,v)\to\frac12\|(u,v)\|_{\mathcal{H}}^2$ is lower semi-continuous with respect to the weak topology, from \textit{Condition (2)} in Lemma \ref{lemA.1} and Fatou's lemma, we have
	$$\begin{aligned}
		&\liminf_{\varepsilon\to0}\mathbb{E}\Big[h\circ\mathcal{G}^{\varepsilon}\left(\sqrt\varepsilon B^H+u^{\varepsilon},\sqrt\varepsilon W+v^{\varepsilon}\right)+\frac{1}{2}\|(u^{\varepsilon},v^{\varepsilon})\|_{\mathcal{H}}^{2}\Big]\\ \geq&\mathbb{E}\Big[h\circ\mathcal{G}^0(u,v)+\frac{1}{2}\|(u,v)\|_{\mathcal{H}}^2\Big]  \\
		\geq&\inf_{\{(x,u,v):x=\mathcal{G}^0(u,v)\}}\Big\{h(x)+\frac12\|(u,v)\|_{\mathcal{H}}^2\Big\} \\
		\geq&\inf_{x\in\mathcal{E}}\{h(x)+I(x)\}.
	\end{aligned}$$
	This completes the proof of the lower bound.\\
	\textit{Compactness of Level Sets.} To prove that $I$ is a good rate function, we need to show that, for
	any $0<M<\infty$, the level set $\{x:I(x)\leq M\}$ is compact. In order to prove this, we will show that
	$$\{x:I(x)\leq M\}=\bigcap_{n=1}^\infty\Gamma_{M+\frac1n}.$$
	According to \textit{Condition (1)} in Lemma \ref{lemA.1}, the
	set $\Gamma_{M+\frac1n}$ is compact for each $n$.
	Therefore, the compactness of the level set $\{x:I(x)\leq M\}$ follows. 
	
	Let $x\in\mathcal{E}$ such that $I(x)\leq M$. By the definition of $I(x)$, there exists $(u^n,v^n)\in\mathcal{H}$ such that $\frac12\|(u^n,v^n)\|_{\mathcal{H}}^2\leq M+\frac{1}{n}$ and $x=\mathcal{G}^0(u^n,v^n)$, This shows that $x\in\bigcap_{n=1}^\infty\Gamma_{M+\frac1n}$. Conversely, suppose $x\in\bigcap_{n=1}^\infty\Gamma_{M+\frac1n}$. Then, for each $n$, there exists $(u^n,0)\in S_{M+\frac1n}$ such that $x=\mathcal{G}^0(u^n,0)$. Thus, we have $I(x)\leq\frac12\|u^n\|_{\mathcal{H}^H}^2\leq M+\frac{1}{n}$ for all $n$. Taking the limit as $n\to \infty$, we conclude that $I(x)\leq M$. This implies that $I$ is a good rate function.
\end{proof}

\section{Some Technical Proofs} \label{B}

\subsection{Proof of Lemma \ref{lem4.1}}\label{secA.2}
For $h=\mathbb{K}_H\dot{h}\in\mathcal{H}^H$ and $t\in[0,T]$, from the integral  %$B^H_t =\int_0^T K_H(t,s)dB_s$ 
\eqref{eqn-2}, we have
$$
\begin{aligned}
	h(t)=&\int_0^T K_H(t,s)\dot{h}(s)\mathrm{d}s
	=\mathbb{E}\Big[\int_0^T K_H(t,s)\mathrm{d}B_s \int_0^T \dot{h}(s)\mathrm{d}B_s\Big]
	=\mathbb{E}\Big[B^H_t\int_0^T\dot{h}(s)\mathrm{d}B_s \Big],
\end{aligned}
$$ 
where $B=\sum_{i=1}^{\infty}\sqrt{\lambda_i}e_i\beta^i_s$ is a $V$-valued BM. Consequently, for any $s,t\in[0,T]$, from H\"{o}lder's inequality, we have 
\begin{align}\label{eqn-3.9}
	\|h(t)\|_1\leq& \left(\mathbb{E}\left[\|B^H_t\|^2\right]\right)^{\frac12}\|\dot{h}\|_{L^2([0,T],V_1)},\\
	\label{eqn-3.10}
	\|h(t)-h(s)\|_1\leq&\left(\mathbb{E}\left[\|B^H_t-B^H_s\|^2\right]\right)^{\frac12}\|\dot{h}\|_{L^2([0,T],V_1)}.
\end{align}
Using the relevant conclusions of one-dimensional BM and  \eqref{eqn-2.3}, we obtain
\begin{align}
	\mathbb{E}[\|B^H_t\|^2]
	=&\mathbb{E}\bigg[\Big\|\sum_{i=1}^{\infty}\sqrt{\lambda_i}e_i \beta^{H,i}_t\Big\|^2\bigg]
	\leq C\text{tr}(Q_1)t^{2H},\label{eqn-3.32}\\
	\mathbb{E}\left[\|B^H_t-B^H_s\|^2\right]
	=&\mathbb{E}\bigg[\Big\|\sum_{i=1}^{\infty}\sqrt{\lambda_i}e_i \big(\beta^{H,i}_t-\beta^{H,i}_s\big)\Big\|^2\bigg]
	\leq C\text{tr}(Q_1)(t-s)^{2H}.\label{eqn-3.33}
\end{align}
By substituting   $\eqref{eqn-3.32}$-$\eqref{eqn-3.33}$ into  $\eqref{eqn-3.9}$-$\eqref{eqn-3.10}$, we have
$$\|h(t)\|_1\leq C(\text{tr}(Q_1))^\frac12 t^{H}\|\dot{h}\|_{L^2([0,T],V_1)} \leq C\|h\|_{\mathcal{H}^H},$$
$$\|h(t)-h(s)\|_1\leq C(\text{tr}(Q_1))^\frac12|t-s|^{H}\|\dot{h}\|_{L^2([0,T],V_1)}
\leq C|t-s|^{H} \|h\|_{\mathcal{H}^H}.$$
Then, we have
$$
\|Q_1^{-\frac12}h\|_{H-hld}=\sup_{0\leq t\leq T}\|h(t)\|_1+\sup_{0\leq s<t\leq T}\frac{\|h(t)-h(s)\|_1}{(t-s)^H}
\leq C\|h\|_{\mathcal{H}^H}
<\infty,
$$
which completes the proof.$\hfill\square$\\

\subsection{Proof of Lemma \ref{lem4.2}}\label{secA.3}

Set
$$\Lambda^{0,T}_{\alpha,u}:=\sum_{i=1}^{\infty}\sqrt{\lambda_i}\Lambda_{\alpha,Q_1^{-1/2}ue_i}^{0,T}.$$
Since for any $\kappa \in(0, \alpha)$, we have $C^{1-\alpha+\kappa}\subset W^{\alpha, \infty}_T\subset C^{1-\alpha-\kappa}$, combining Lemma \ref{lem4.1} with $\alpha>1-H$, it follows that $$\sup_{i\in\mathbb{N}}\Lambda_{\alpha, Q_1^{-1/2}ue_i}^{0, T}
\leq C\big\|Q_1^{-\frac12}u\big\|_{\alpha, 0, T}\leq C\big\|Q_1^{-\frac12}u\big\|_{H-hld}\leq C \|u\|_{\mathcal{H}^H}<\infty.$$ 
Here, the first inequality can be obtained by the definition of $\Lambda_{\alpha,Q_1^{-1/2}u e_i}^{0,T}$. Therefore, the series
$$\sum_{i=1}^{\infty}\sqrt{\lambda_i} \Lambda_{\alpha, Q_1^{-1/2}ue_i}^{0, T}
$$
is convergent, and it follows that
$\Lambda^{0, T}_{\alpha, u}
<\infty.$ 
Therefore, from  \eqref{eqn-4}, it follows that for any 
$n\in\mathbb{N}$ and $t\in[0, T]$, we have
\begin{align}
	\sum_{i=n}^{\infty}\sqrt{\lambda_i}\Big\|\int_0^tG( s)e_i\mathrm{d}(Q_1^{-\frac12}u_se_i)\Big\|\leq \sum_{i=n}^{\infty}\sqrt{\lambda_i}\Lambda_{\alpha, Q_1^{-1/2}ue_i}^{0, T} \sup_{i\in\mathbb{N}}\|Ge_i\|_{\alpha, 1}.\label{eqn-2.12}
\end{align}
As $n\to\infty$, the right-hand side of the above equation tends to 0. Therefore, the series in  (\ref{eqn-9}) converges in $V$. Furthermore, from  (\ref{eqn-9}) and (\ref{eqn-2.12}), it follows that  (\ref{eqn-2.11}) holds.

$\hfill\square$\\

\subsection{Proof of Lemma \ref{lem4.5}}\label{secA.4}
In this proof, $0<\delta\ll\varepsilon<1$ are fixed and $C$ is a positive constant depending on only $p$, $T$ and $N$, which may change from line to line. We define $(X,Y):=(\tilde{X}^{\varepsilon, \delta},\tilde{Y}^{\varepsilon, \delta})$,  and $Y^*_t:=\sup_{0\leq s\leq t}\|\tilde{Y}^{\varepsilon, \delta}_s\|$ to simplify the symbols.

For any $1\leq p<\infty$, applying H\"{o}lder's inequality and Burkholder-Davis-Gundy's inequality, from Assumption \textbf{(A2)}, we indicate that 
\begin{align}
	\mathbb{E}\left[\|Y^*_t\|^p\right]
	\leq&\|S_{\frac t \delta}Y_0\|^p
	+ C\mathbb{E}\bigg[\Big( \int_0^t  \|G(X_r,Y_r)Q_2^{\frac12}\|^2_{HS}\mathrm{d}r \Big)^{\frac p2} \Big(\int_0^t \Big\|Q_2^{-\frac12}\frac{\mathrm{d}v^\varepsilon_r}{\mathrm{d}r}\Big\|^2  \mathrm{d}r  \Big)^{\frac p2} \bigg]   \notag\\
	&+C\mathbb{E}\bigg[\Big( \int_0^t \left(1+\|X_r\|^2+\|Y_r\|^2 \right)  \mathrm{d}r  \Big)^{\frac p2}\bigg] 
	+ 
	C\mathbb{E}\bigg[\Big( \int_0^t(1+\|X_r\|^2  ) \mathrm{d}r \Big)^\frac p2\bigg] 
	\notag\\
	\leq &C\|Y_0\|^p
	+C\int_0^t\mathbb{E}\big[\|Y^*_s\|^p\big]\mathrm{d}s+C,
	\notag
\end{align}
where we used H\"{o}lder's inequality again and the conclusion of Lemma \ref{lem4.3} in the last step. Hence, according to Gronwall's inequality, one can conclude that
$$\mathbb{E}\big[\|Y^*_t\|^p\big]\leq Ce^{Ct}\leq C,~~~~t\in[0,T].$$
The proof is completed.$\hfill\square$\\

\subsection{Proof of Lemma \ref{lem4.6}}\label{secA.5}
The It\^{o}'s formula yields that
\begin{align}
	\mathbb{E}\big[\|\tilde{Y}^{\varepsilon,\delta}_t\|^2\big]=& \|Y_0\|^2+\frac2\delta\mathbb{E}\left[\int_0^t\langle\tilde{Y}^{\varepsilon,\delta}_s,A\tilde{Y}^{\varepsilon,\delta}_s\rangle \mathrm{d}s\right]
	+\frac2\delta \mathbb{E}\left[\int_0^t\langle\tilde{Y}^{\varepsilon,\delta}_s,F(\tilde{X}^{\varepsilon,\delta}_s,\tilde{Y}^{\varepsilon,\delta}_s)\rangle \mathrm{d}s\right]\notag\\
	&+\frac{2}{\sqrt{\delta\varepsilon}}\mathbb{E}\left[ \int_0^t\langle \tilde{Y}^{\varepsilon,\delta}_s,G(\tilde{X}^{\varepsilon,\delta}_s,\tilde{Y}^{\varepsilon,\delta}_s) \frac{\mathrm{d}v^\varepsilon_s}{\mathrm{d}s}  \rangle \mathrm{d}s \right]\notag\\
	&+\frac{2}{\sqrt{\delta}}\mathbb{E}\left[\int_0^t \langle \tilde{Y}^{\varepsilon,\delta}_s,G(\tilde{X}^{\varepsilon,\delta}_s,\tilde{Y}^{\varepsilon,\delta}_s)   \rangle \mathrm{d}W_s \right]
	+\frac1\delta\mathbb{E}\left[\int_0^t\|G(\tilde{X}^{\varepsilon,\delta}_s,\tilde{Y}^{\varepsilon,\delta}_s)\|^2_{HS}\mathrm{d}s\right].
	\notag
\end{align}

According to Lemmas \ref{lem4.3} and \ref{lem4.5}, we can obtain that the fifth term is a true martingale. In particular, we have $\mathbb{E}\big[\int_0^t \langle \tilde{Y}^{\varepsilon,\delta}_s,G(\tilde{X}^{\varepsilon,\delta}_s,\tilde{Y}^{\varepsilon,\delta}_s)   \rangle \mathrm{d}W_s\big]=0$. Owing to the Assumption \textbf{(A7)}, one can take $\frac\delta\varepsilon\leq\left(\bar\lambda_1+\beta_1\right)^2$. According to Assumptions \textbf{(A4)}-\textbf{(A6)} and Young's inequality, we have
\begin{align}
	\frac{\mathrm{d}\mathbb{E}\big[\|\tilde{Y}^{\varepsilon,\delta}_t\|^2\big]}{\mathrm{d}t}
	=&\frac2\delta\mathbb{E}\big[\langle \tilde{Y}^{\varepsilon,\delta}_t,A\tilde{Y}^{\varepsilon,\delta}_t     \rangle\big]
	+\frac2\delta \mathbb{E}\big[\langle\tilde{Y}^{\varepsilon,\delta}_t,F(\tilde{X}^{\varepsilon,\delta}_t,\tilde{Y}^{\varepsilon,\delta}_t)\rangle \big]\notag\\
	&+\frac1\delta\mathbb{E}\big[\|G(\tilde{X}^{\varepsilon,\delta}_t,\tilde{Y}^{\varepsilon,\delta}_t)\|^2_{HS}\big]
	+ \frac{2}{\sqrt{\delta\varepsilon}}\mathbb{E}\bigg[ \Big\langle \tilde{Y}^{\varepsilon,\delta}_t,G(\tilde{X}^{\varepsilon,\delta}_t,\tilde{Y}^{\varepsilon,\delta}_t) \frac{\mathrm{d}v^\varepsilon_t}{\mathrm{d}t}  \Big\rangle  \bigg] \notag\\
	\leq& \Big(\frac{-2\bar{\lambda}_1}{\delta}+\frac{1}{\sqrt{\delta\varepsilon}}\Big)\mathbb{E}\big[\|\tilde{Y}^{\varepsilon,\delta}_t\|^2\big]
	+\frac2\delta \mathbb{E}\big[-\beta_1\|\tilde{Y}^{\varepsilon,\delta}_t\|^2+\beta_2\big]
	\notag\\ 
	& +\frac{C_6}{\sqrt{\delta\varepsilon}} \mathbb{E}\bigg[\big(1+\|\tilde{X}^{\varepsilon,\delta}_t\|^2\big)\left\|\frac{\mathrm{d}v^\varepsilon_t}{\mathrm{d}t}\right\|^2\bigg]
	+\frac{C_6}{\delta}\mathbb{E}\big[1+\|\tilde{X}^{\varepsilon, \delta}_t\|^2\big]\notag\\
	\leq& \frac{-\bar\lambda_1-\beta_1}{\delta}\mathbb{E}\big[\|\tilde{Y}^{\varepsilon,\delta}_t\|^2\big]
	+\frac{C}{\sqrt{\delta\varepsilon}}\mathbb{E}\bigg[\big(1+\|\tilde{X}^{\varepsilon,\delta}_t\|^2\big)\left\|\frac{\mathrm{d}v^\varepsilon_t}{\mathrm{d}t}\right\|^2\bigg]
	+\frac{C}{\delta},
	\notag
\end{align}
where the last step is due to Lemma \ref{lem4.3}. 

Moreover, consider the ODE
$$
	\frac{\mathrm{d}A_t}{\mathrm{d}t}=\frac{-\bar\lambda_1-\beta_1}{\delta}A_t+\frac{C}{\sqrt{\delta\varepsilon}}\mathbb{E}\bigg[\|\tilde{X}^{\varepsilon,\delta}_t\|^2\left\|\frac{\mathrm{d}v^\varepsilon_t}{\mathrm{d}t}\right\|^2\bigg]+\frac{C}{\sqrt{\delta\varepsilon}}\mathbb{E}\bigg[\left\|\frac{\mathrm{d}v^\varepsilon_t}{\mathrm{d}t}\right\|^2\bigg]
	+\frac{C}{\delta},\quad
	A_0=\|Y_0\|^2.
$$
Hence, the solution of the above ODE has an explicit expression
$$
\begin{aligned}
	A_t=&\|Y_0\|^2e^{\frac{-\bar\lambda_1-\beta_1}{\delta}t}+\frac{C}{\sqrt{\delta\varepsilon}}\int_0^t e^{\frac{-\bar\lambda_1-\beta_1}{\delta}(t-s)}\mathbb{E}\bigg[\|\tilde{X}^{\varepsilon,\delta}_s\|^2\left\|\frac{\mathrm{d}v^\varepsilon_s}{\mathrm{d}s}\right\|^2\bigg]\mathrm{d}s\\
	&+\frac{C}{\sqrt{\delta\varepsilon}}\int_0^t e^{\frac{-\bar\lambda_1-\beta_1}{\delta}(t-s)}\mathbb{E}\bigg[\left\|\frac{\mathrm{d}v^\varepsilon_s}{\mathrm{d}s}\right\|^2\bigg]\mathrm{d}s+\frac{C}{\delta}\int_0^t e^{\frac{-\bar\lambda_1-\beta_1}{\delta}(t-s)}\mathrm{d}s.
\end{aligned}
$$
Besides, by some simple computations, we obtain
$$\frac{\mathrm{d}(\mathbb{E}[\|\tilde{Y}^{\varepsilon, \delta}_t\|^2]-A_t)}{\mathrm{d}t}\leq \frac{-\bar\lambda_1-\beta_1}{\delta} \left(\mathbb{E}\big[\|\tilde{Y}^{\varepsilon, \delta}_t\|^2\big]-A_t\right).$$
The comparison theorem implies that, for any $t\in[0,T]$,
$$
\begin{aligned}
	\mathbb{E}\big[\|\tilde{Y}^{\varepsilon, \delta}_t\|^2\big]\leq A_t.
%	=&\|Y_0\|^2e^{\frac{-\bar\lambda_1-\beta_1}{\delta}t}+\frac{C}{\sqrt{\delta\varepsilon}}\int_0^t e^{\frac{-\bar\lambda_1-\beta_1}{\delta}(t-s)}\mathbb{E}\bigg[\|\tilde{X}^{\varepsilon,\delta}_s\|^2\left\|\frac{\mathrm{d}v^\varepsilon_s}{\mathrm{d}s}\right\|^2\bigg]\mathrm{d}s\\
%	&+\frac{C}{\sqrt{\delta\varepsilon}}\int_0^t e^{\frac{-\bar\lambda_1-\beta_1}{\delta}(t-s)}\mathbb{E}\bigg[\left\|\frac{\mathrm{d}v^\varepsilon_s}{\mathrm{d}s}\right\|^2\bigg]\mathrm{d}s
%	+\frac{C}{\delta}\int_0^t e^{\frac{-\bar\lambda_1-\beta_1}{\delta}(t-s)}\mathrm{d}s.
\end{aligned}
$$
Then, according to Fubini's theorem and the fact that $(u^\varepsilon,v^\varepsilon)\in\mathcal{A}_b^N$, we have
\begin{align}
	\int_0^T \mathbb{E}\big[\|\tilde{Y}^{\varepsilon, \delta}_t\|^2\big] \mathrm{d}t
%	\leq&\frac{\delta}{\bar\lambda_1+\beta_1}\|Y_0\|^2 
%	+\frac{C \|Q_2\|_{op}}{\sqrt{\delta\varepsilon}}\int_0^T \mathbb{E}\bigg[\left\|\frac{\mathrm{d}v^\varepsilon_s}{\mathrm{d}s}\right\|_2^2\bigg]   \int_s^T e^{\frac{-\bar\lambda_1-\beta_1}{\delta}(t-s)}  \mathrm{d}t\mathrm{d}s
%	+C\notag\\
%	&+\frac{C \|Q_2\|_{op}}{\sqrt{\delta\varepsilon}}\mathbb{E}\bigg[\sup_{t\in[0,T]}\|\tilde{X}^{\varepsilon,\delta}_t\|^2 \int_0^T \left\|\frac{\mathrm{d}v^\varepsilon_s}{\mathrm{d}s}\right\|_2^2\int_s^T e^{\frac{-\bar\lambda_1-\beta_1}{\delta}(t-s)} \mathrm{d}t\mathrm{d}s\bigg]\notag\\
	\leq&C\|Y_0\|^2 
	+C\sqrt{\frac\delta\varepsilon}\mathbb{E}\Big[\sup_{t\in[0,T]}\|\tilde{X}^{\varepsilon,\delta}_t\|^2 \Big]
	+C\sqrt{\frac\delta\varepsilon}
	+C\notag\\
	\leq&C\mathbb{E}\Big[\sup_{t\in[0,T]}\|\tilde{X}^{\varepsilon,\delta}_t\|^2 \Big] +C,\label{eqn-4.39}
\end{align}
where
%the operate norm of $Q_2$ denoted by $\|Q_2\|_{op}$ is bounded and 
the last step owes to the fact that $\frac\delta\varepsilon\leq\left(\bar\lambda_1+\beta_1\right)^2$. 
Combining  \eqref{eqn-4.39} with Lemma \ref{lem4.3}, we finish the proof.
$\hfill\square$\\

%%=============================================%%
%% For submissions to Nature Portfolio Journals %%
%% please use the heading ``Extended Data''.   %%
%%=============================================%%

%%=============================================================%%
%% Sample for another appendix section			       %%
%%=============================================================%%

%% \section{Example of another appendix section}\label{secA2}%
%% Appendices may be used for helpful, supporting or essential material that would otherwise 
%% clutter, break up or be distracting to the text. Appendices can consist of sections, figures, 
%% tables and equations etc.

\end{appendices}

%%===========================================================================================%%
%% If you are submitting to one of the Nature Portfolio journals, using the eJP submission   %%
%% system, please include the references within the manuscript file itself. You may do this  %%
%% by copying the reference list from your .bbl file, paste it into the main manuscript .tex %%
%% file, and delete the associated \verb+\bibliography+ comman\mathrm{d}s.                            %%
%%===========================================================================================%%
%\bibliographystyle{bmc-mathphys-alpha}
\bibliography{sn-bibliography}% common bib file
%% if required, the content of .bbl file can be included here once bbl is generated
%%\input sn-article.bbl

\end{document}